\definecolor{darkgreen}{rgb}{0.13, 0.55, 0.13}
\numberwithin{equation}{section}
\theoremstyle{plain}
\newtheorem{theorem}{Theorem}[section]
\newtheorem{proposition}[theorem]{Proposition}
\theoremstyle{definition}
\newtheorem*{rh-pb*}{Basic RH problem}
\newtheorem*{rh-I*}{RH problem normalized as $I$ at $\infty$}
\newtheorem*{sol-rh-pb*}{Soliton RH problem}
\newtheorem*{data*}{Data of this RH problem associated with $\BS{u_0(x)}$}
\theoremstyle{remark}
\newtheorem{remark}[theorem]{Remark}
\newtheorem*{notations*}{Notations}
\providecommand{\BS}[1]{\boldsymbol{#1}}  
\providecommand{\D}[1]{\mathbb{#1}}
\newcommand{\dd}{\mathrm{d}}
\newcommand{\eul}{\mathrm{e}}
\newcommand{\ee}{\mathrm{e}}
\newcommand{\ii}{\mathrm{i}}
\newlength{\dhatheight}
\newcommand{\doublehat}[1]{%
    \settoheight{\dhatheight}{\ensuremath{\hat{#1}}}%
    \addtolength{\dhatheight}{-0.3ex}%
    \hat{\vphantom{\rule{1pt}{\dhatheight}}%
    \smash{\hat{#1}}}}
\renewcommand{\Im}{\operatorname{Im}}
\renewcommand{\Re}{\operatorname{Re}}
\newcommand{\ord}{\mathrm{O}}
\newcommand{\supp}{\operatorname{supp}}
\DeclareMathOperator{\Res}{Res}
\newcommand{\red}{\textcolor{red}}
\newif\ifshort
\title{The periodic Camassa-Holm equation by the Riemann-Hilbert problem approach}
\author[Anne Boutet de Monvel]{Anne Boutet de Monvel}
\address{AB: Institut de Mathématiques de Jussieu–PRG,
Université Paris Cité,
Campus des Grands Moulins,
Bâtiment Sophie Germain
8, place Aurélie Nemours
75013 Paris, France}
\email{\href{mailto:anne.boutet-de-monvel@imj-prg.frr }{anne.boutet-de-monvel@imj-prg.fr }}
\author[Iryna Karpenko]{Iryna Karpenko}
\address{IK: Faculty of Mathematics\\ University of Vienna\\
Oskar-Morgenstern-Platz 1\\ 1090 Wien\\ Austria\\ and B. Verkin Institute for Low Temperature Physics and Engineering\\ 47, Nauky ave\\ 61103 Kharkiv\\ Ukraine}
\email{\href{mailto:iryna.karpenko@univie.ac.at}{iryna.karpenko@univie.ac.at}}
\author[Dmitry Shepelsky]{Dmitry Shepelsky}
\address{DS: B.Verkin Institute for Low Temperature Physics and Engineering, Kharkiv, Ukraine\\ and Kyiv School of Economics, Ukraine}
\email{\href{mailto:shepelsky@yahoo.com }{shepelsky@yahoo.com}}
\author[Lech Zielinski]{Lech Zielinski}
\address{LZ: Université du Littoral Côte d'Opale, France}
\email{\href{mailto:lech.zielinski@univ-littoral.fr }{lech.zielinski@univ-littoral.fr }}
\begin{document}

\begin{abstract}

This work addresses the development of the Riemann-Hilbert problem (RHP) formalism (the Fokas method) for the Camassa-Holm equation under periodic boundary conditions. Particularly, we present a representation of the solution to this problem in terms of the solution of the associated Riemann-Hilbert problem, the data for which are determined by the intial data for the problem in terms of the associated spectral functions.

\end{abstract}

\maketitle

\section{Introduction}

The inverse scattering transform (IST) method, introduced nearly fifty years ago, provides a powerful analytical framework for solving a class of one-dimensional nonlinear evolution equations known as \emph{integrable equations} on the real line. The method is based on the analysis of an associated Lax pair -- a pair of linear operators typically depending on a spectral parameter whose compatibility condition yields the nonlinear equation \cites{IST01,IST02,ablowitz1974inverse, ablowitz1991solitons}.

The IST consists of three main components: (i) the solving of a direct spectral problem, which maps initial data to spectral data via a linear auxiliary system; (ii) the characterization of the time evolution of the spectral data; and (iii) the solving of an inverse problem, through which the original nonlinear PDE is reconstructed at arbitrary times.  The IST thereby transforms a nonlinear evolution equation into a linear spectral flow, making it a nonlinear analogue of the Fourier transform.

The IST method was originally developed for initial value problems on the real line, where both the initial data and the solution decay rapidly as the spatial variable tends to infinity. In this setting, the direct spectral problem reduces to a standard scattering problem. The inverse step, in its classical formulation, is carried out via the Gelfand–Levitan–Marchenko integral equations \cite{novikov1984theory}. Alternatively, the inverse problem can be reformulated as a Riemann--Hilbert (RH) factorization problem in the complex spectral plane \cite{trogdon2015riemann}.

Extending the inverse scattering transform to problems on the half-line or finite intervals presents significant analytical challenges due to the presence of boundary conditions. These difficulties were overcome by the development of the Unified Transform Method, also known as the Fokas Method. Introduced by Fokas in \cite{fokas1997}, and further developed by numerous authors  \cite{fokas2008unified}, this method generalizes the IST by simultaneously analyzing both equations of the Lax pair. A central component of the approach is the \emph{global relation} which connects the spectral transforms of the initial and boundary data. For certain boundary conditions -- classified as \emph{linearizable} -- the global relation can be explicitly resolved, allowing the formulation of a Riemann–Hilbert problem in terms of given data alone. In such cases, the method leads to a complete spectral characterization of the solution for a well-posed problem, where the initial and boundary data are sufficient to determine the evolution uniquely and consistently.

In \cites{DFL21, FL21}, it was shown that the initial-boundary value problem for the nonlinear Schrödinger (NLS) equation on a finite interval $[0, L] \subset \mathbb{R}$, under periodic boundary conditions $q(0,t) = q(L,t)$, $q_x(0,t) = q_x(L,t)$, belongs to the linearizable case. For both the focusing and defocusing cases, the solution $q(x,t)$ can be expressed in terms of an RH problem whose jump matrix and residue conditions are constructed from the scattering matrix for a spectral problem on the whole line associated with $q(x,0)$ continued by $0$ on ${\mathbb R}\setminus [0,L]$. The RH contour consists of the real and imaginary axes, together with a (possibly infinite) collection of finite segments symmetric with respect to the real axis.

Recently \cite{SKPB24},
building on this method,
it has been shown that periodic 
finite-band (algebro-geometric)  solutions to the NLS equation can be 
characterized in terms of their values as data for initial value problem in the periodic setting. More precisely,
a finite-band, periodic solution can be 
given in terms of the solution of  associated RH problem, the 
jump contour for which is specified by the parameters of the 
Riemann surface associated with this  algebro-geometric solution, and the associated 
jump conditions are given in terms of the scattering 
functions that are associated with the  solution of the NLS equation evaluated at a fixed time.

The present paper is devoted to periodic problems 
for the Camassa-Holm (CH) equation
\begin{equation}\label{CH}
    u_t - u_{txx} + 2 u_x + 3uu_x = 2u_x u_{xx} + u u_{xxx},
\end{equation}
which can be written more compactly in terms
the \emph{momentum} $m:=u-u_{xx}$ as
\begin{equation}\label{CH-m}
m_t+2 u_x m + u m_x+ 2  u_x=0, \quad m=u-u_{xx}
\end{equation}
or, assuming that $m+1>0$, as
\begin{equation}\label{CH-m1}
(\sqrt{m+1})_t=-(u\sqrt{m+1})_x,\quad m=u-u_{xx}.
\end{equation}
This nonlinear partial differential equation models the unidirectional propagation of shallow water waves over a flat bottom. It is derived as an asymptotic approximation of the Euler equations, and it preserves key geometric and Hamiltonian structures, making it an integrable system with a bi-Hamiltonian formulation and an infinite sequence of conservation laws.

The CH equation  exhibits a rich variety of wave phenomena, including both smooth periodic waves and peaked traveling waves (peakons), which are continuous but possess discontinuous first derivatives.  Unlike classical dispersive equations such as the Korteweg–de Vries (KdV) equation, the CH equation permits wave breaking: the solution may remain bounded while its spatial derivative blows up in finite time.

We consider the CH equation on a finite spatial interval \( x \in [0, L] \), supplemented with the periodicity conditions
\begin{equation}\label{period}
    u(0,t) = u(L,t), \quad u_x(0,t) = u_x(L,t), \quad 
    u_{xx}(0,t) = u_{xx}(L,t), \quad
    \text{for all } t \geq 0,
\end{equation}
and the initial condition
\begin{equation}\label{ic}
    u(x,0) = u_0(x), \quad x \in [0,L].
\end{equation}
Periodic problems for the CH equation have been studied extensively since 1998
\cite{CE98}.
Particularly, it was shown that 
 the equation is locally well-posed in Sobolev spaces, and for a wide class of initial data, the corresponding solutions exist globally in time. In particular, if the initial momentum $m=u-u_{xx}$
  does not change sign, the solution exists for all time.
 On the other hand, smooth initial data that violate this sign condition may lead to wave breaking in finite time. 

 A significant step in understanding the post-blow-up regime came through the construction of global weak solutions. Holden and Raynaud~\cite{HoldenRaynaud2007}, and later Grunert, Holden, and Raynaud~\cite{GrunertHoldenRaynaud2013}, developed a Lagrangian formulation that allows conservative global solutions even after singularity formation. This framework tracks the evolution of characteristics and the associated momentum, enabling the continuation of solutions beyond wave breaking without energy dissipation.

In the present paper, we develop the RH method for solving the initial value problem 
for the CH equation with the periodic boundary
conditions. Correspondingly, our main aim 
is to obtain a representation of the solution 
of this problem in terms of the solution of an associated RH problem, the data for which
are determined in terms of the spectral functions associated with the initial data.

The outline of the paper is as follows. In Section 2, we discuss 
additional assumptions concerning the initial data.
In Section 3, we briefly present the ideas 
of  the RH approach to the  initial value problem
(on the line) for the CH equation and their
 adaptation to initial boundary 
value problems. In Section 4, we first discuss 
the relationship between solutions of the CH equation in the original variables $(x,t)$
and solutions of the CH equation in variables $(y,t)$ suitable for the RH formalism.
Then we discuss 
how a function solving, locally, the CH equation 
in $(y,t)$ variables appears from the solution of a Riemann-Hilbert
problem parametrized by $y$ and $t$. In Section 5, we describe how, using the 
characterization of  the boundary values in the spectral terms (``Global Relations''),
to arrive at a RH problem with data determined in terms of the initial data only, and prove that 
the solution of the CH problem obtained in terms of the solution of this RH problem 
(i) indeed satisfied the prescribed initial condition and (ii) is periodic in $y$ for any fixed $t$.

\section{Assumptions on initial data}

We consider the periodic problem for the CH equation \eqref{CH-m}, \eqref{period}, \eqref{ic}.
We additionally assume that (i) $m(x,0)+1=u_0(x)-u_{0xx}(x)+1> 0$ for all $x\in[0,L]$ and (ii) $m(0,0)=m(L,0)=0$.

\begin{remark}
    Observe that the condition 
 $m(x,0)+1> 0$ ensures that $m(x,t)+1>0$ for all $t$ \cite{CE98}.
 This positivity is crucial for our analysis, as it allows us to properly define the Jost solutions, which are fundamental for construction of the associated Riemann-Hilbert problem.

\end{remark}

\begin{remark}

The second assumption $m(0,0)=m(L,0)=0$ may seem restrictive, but it turns out that many cases can be reduced to it through suitable linear transformations.

Namely, suppose that $u(x,t)$ satisfies the 
equation 
\begin{equation}\label{CH-m-}
m_t+2 u_x m + u m_x+ 2\omega  u_x=0, \quad m=u-u_{xx}
\end{equation}
with some $\omega\geq0$ and conditions
\eqref{period}, \eqref{ic} 
such that $m_0(0)=m_0(L)=A$ (where $m_0(x):=u_0(x)-u_{0xx}(x)$) with $A$ satisfying 
 either (i) $A+\omega>0$ or (ii) $A+\omega<0$. 
Furthermore, assume that, respectively, either 
\begin{enumerate}[(i)]
    \item $m_0(x)+\omega>0$ for all $x\in [0,L]$  or
    \item $m_0(x)+\omega<0$ for all $x\in [0,L]$.
\end{enumerate}
Then the function
\[\tilde u(x,t):=\frac{1}{A+\omega}\left(u\left(x+\frac{A}{A+\omega}t,\frac{t}{A+\omega}\right)-A\right)\]
satisfies \eqref{CH-m}, \eqref{period}. Moreover, for $\tilde m= \tilde u-\tilde u_{xx}$ we have $\tilde m(0,0)=\tilde m(L,0)=0$ and $\tilde m(x,0)+1>0$ for all $x\in[0,L]$.

\end{remark}

\section{RH problem formalism for the CH equation}\label{sec:3}
\subsection{The Lax pair}

The Camassa-Holm (CH) equation is the compatibility condition ($\psi_{xxt}=\psi_{txx}$) for 2 linear equations ({Lax pair):
\begin{equation}\label{lax-prelim}
    \psi_{xx}=\frac{1}{4}\psi+\lambda(m+1)\psi,\quad 
    \psi_{t}=\left(\frac{1}{2\lambda}-u\right)\psi_x+\frac{1}{2}u_x\psi
\end{equation}
Introducing $\Phi$  by
\begin{equation}\label{lax-uv}
    \Phi:=\frac{1}{2}\left(\begin{matrix}
    1 & -\frac{1}{ik} \\ 1 & \frac{1}{ik} 
\end{matrix}\right)\left(\begin{matrix}
    (m+1)^\frac{1}{4} & 0  \\ 0 & (m+1)^{-\frac{1}{4}}
\end{matrix}\right)
\left(\begin{matrix}
    \psi  \\ \psi_x
\end{matrix}\right),
\end{equation}
the Lax pair can be written in the form of two-component,  first order differential  equations 
\begin{equation}\label{lax}
\begin{aligned}
    \Phi_{ x}+\ii k p_x \sigma_3  \Phi& =U  \Phi,\\
   \Phi_{t}+\ii k p_t \sigma_3  \Phi&=V \Phi,
   \end{aligned}
\end{equation}
where $\sigma_3=\left(\begin{smallmatrix}
    1 & 0 \\ 0 & -1
\end{smallmatrix}\right)$, $k^2=-\lambda-\frac{1}{4}$, $p_x=\sqrt{m+1}$, $p_t=-u\sqrt{m+1}+\frac{1}{2\lambda}$
(the compatibility of these equations is just the CH equation
in the form \eqref{CH-m1}), 
\begin{equation}\label{U}
    U(x,t,k)=\frac{1}{4}\frac{m_x}{m+1}\begin{pmatrix}
        0 & 1 \\
1 & 0
    \end{pmatrix}-\frac{1}{8\ii k}\frac{m}{\sqrt{m+1}}\begin{pmatrix}
        -1 & -1 \\
1 & 1
    \end{pmatrix},
\end{equation}
\begin{equation}\label{V}
\begin{aligned}
    V(x,t,k)=&\left (\frac{1}{4}\frac{m_t}{m+1}+\frac{u_x}{2}\right )\begin{pmatrix}
        0 & 1 \\
1 & 0
    \end{pmatrix}+\frac{1}{8\ii k}\frac{u(m+2)}{\sqrt{m+1}}\begin{pmatrix}
        -1 & -1 \\
1 & 1
    \end{pmatrix}+\\
    &+\frac{\ii k}{4\lambda}\left( \sqrt{m+1} \begin{pmatrix}
        -1 & 1 \\
-1 & 1
    \end{pmatrix} + \frac{1}{\sqrt{m+1}}\begin{pmatrix}
        -1 & -1 \\
1 & 1
    \end{pmatrix}
\right)+\frac{\ii k}{2\lambda}\sigma_3.
\end{aligned}
\end{equation}

As for $p(x,t,k)$, it is defined up to a constant (independent of $x$ and $t$), which can be chosen to match the context; 
e.g., dealing with problems on the whole line, it is convenient to define $p$ by 
\begin{equation}\label{p-inf}
   p(x,t,k)= x+ \int_{-\infty}^x (\sqrt{m(\xi,t)+1}-1)\dd\xi+\frac{t}{2\lambda} 
\end{equation}
whereas for problems in domains with the boundary $x=0$,
the appropriate definition for $p$ is
\begin{equation}\label{p}
   p(x,t,k)=\int_0^x \sqrt{m(\xi,t)+1}\dd\xi-\int_0^t u(0,\zeta)\sqrt{m(0,\zeta)+1}\dd\zeta+\frac{t}{2\lambda}. 
\end{equation}

Notice that at $\lambda=0$ ($k=\pm\frac{i}{2}$), the coefficients of the 
first equation ($x$-equation) in \eqref{lax-prelim} becomes independent of $u$
whereas the coefficients of the 
second equation  ($t$-equation) in \eqref{lax-prelim} are 
singular. This suggest 
 introducing another Lax pair 
 having its coefficients (apart from a diagonal term)
 regular at 
 $k\to\pm\frac{i}{2}$. 
Let $\Phi_0:=\frac{1}{2}\left(\begin{smallmatrix}
    1 & -\frac{1}{ik} \\ 1 & \frac{1}{ik}
\end{smallmatrix}\right)
\left(\begin{smallmatrix}
    \psi  \\ \psi_x
\end{smallmatrix}\right)$, 
$p_0(x,t,k) = x+ \frac{t}{2\lambda}=x-\frac{2t}{4k^2+1}$; then
\begin{equation}\label{lax-0}
\begin{aligned}
    \Phi_{0 x}+\ii k p_{0x} \sigma_3  \Phi_0&=U_0  \Phi_0\\
   \Phi_{0t}+\ii k p_{0t} \sigma_3  \Phi_0&=V_0 \Phi_0
\end{aligned}
\end{equation}
with 
\begin{equation}\label{U0}
    U_0(x,t,k)=\frac{\lambda}{2\ii k}m(x,t)\begin{pmatrix}
        -1 & -1 \\
1 & 1
    \end{pmatrix},
\end{equation}

\begin{align}\label{V0}
    V_0(x,t,k)=&\frac{u_x}{2}\begin{pmatrix}
        0 & 1 \\
1 & 0
    \end{pmatrix}+\frac{u}{4\ii k}\begin{pmatrix}
        0 & -1 \\
1 & 0
    \end{pmatrix}+\frac{\lambda u}{2 \ii k}\left( 2\sigma_3 - m\begin{pmatrix}
        -1 & -1 \\
1 & 1
    \end{pmatrix}
\right).
\end{align}
We will see below that the Lax pair \eqref{lax-0} is convenient
for studying boundary value problems for the CH equation, where
the $t$-equation play the role similar to that of the $x$-equation.

\begin{remark}
    The matrix coefficients $U$, $V$ in \eqref{lax}
    are singular at $k=0$ as well, but these singularities 
    are of the special matrix type: the leading coefficients 
    are nilpotent matrices.
\end{remark}

\subsection{RH problem formalism for problem on the line}\label{sec:RH-line-xt}
\subsubsection{Jost solutions and scattering relation}

The Riemann-Hilbert problem formalism for studying problems
for integrable nonlinear PDEs is based on using dedicated 
solutions of the Lax pair equations. In particular, for the Cauchy problem on the whole line, where the equation is considered in the half-plane $-\infty<x<\infty$, $t>0$
and the data are the initial condition $u(x,0)=u_0(x)$, 
$x\in (-\infty, \infty)$, the RH formalism is built 
on the Jost solutions to \eqref{lax} normalized
at $x=\pm\infty$ \cite{BS08}.

Namely, 
assuming that $u(x,t)$ is a solution of the CH equation \eqref{CH} satisfying $(1+|x|)u(x,t)\in L^1 (-\infty, \infty)$ for all $t$,
the (matrix-valued) Jost solutions $\Phi_\pm$ for \eqref{lax}
are defined by $\Phi_\pm=\hat\Phi_\pm e^{-ikp(x,t,k)\sigma_3}$,
where 
 $\hat\Phi_\pm$ are 
the solutions of the integral equations 
 ($e^{\hat\sigma_3}A\equiv e^{\sigma_3}Ae^{-\sigma_3}$)
\begin{equation}\label{Phi-hat}
    \hat\Phi_\pm(x,t,k) = I+\int_{\pm\infty}^x
e^{-ik(p(x,t,k)-p(y,t,k))\hat\sigma_3}(U\hat\Phi_\pm)dy,
\end{equation}
where $p(x,t,k)-p(y,t,k)=\int_y^x \sqrt{m(\xi,t)+1}\dd\xi$
doesn't depend on $k$.

It turns out that  
\begin{enumerate}
    \item 
    the columns of $\Phi_\pm$ are analytic either in the upper
    (${\mathbb C}_+$)
    or  lower (${\mathbb C}_-$) half-planes w.r.t. $k$; namely, $\Phi_+^{(1)}$ and $\Phi_-^{(2)}$ are analytic 
    in ${\mathbb C}_-$ whereas $\Phi_-^{(1)}$ and $\Phi_+^{(2)}$ are analytic 
    in ${\mathbb C}_+$;
    \item 
    $\hat\Phi_\pm\to I$ as $k\to\infty$ (to be understood columnwise, for $k$ in the corresponding half-plane);
    \item 
    being the solutions of  the same system of ordinary
    differential equations \eqref{lax}, $\Phi_+(x,t,k)$
    and $\Phi_-(x,t,k)$ are related by a matrix independent 
    of $x$ and $t$:
    \begin{equation}\label{sr}
        \Phi_+(x,t,k)=\Phi_-(x,t,k) s(k),\quad k\in\mathbb{R}\setminus \{0\},
        \end{equation}
        where, due to the respective symmetries of $U$ and $V$,
        \begin{equation}\label{a-b} 
s(k) = \begin{pmatrix}
 a^*(k) & b(k) \\ b^*(k) & a(k)
\end{pmatrix} 
    \end{equation}
with $a^*(k):=\overline{a(\bar k)}$ and  
$b^*(k):=\overline{b(\bar k)}$;
\item 
evaluating \eqref{Phi-hat} and \eqref{sr} at $t=0$,  the spectral functions
$\{a(k),b(k)\}$
can be obtained solving the equations for the Jost solutions taken at $t=0$
and thus they are determined by  $u_0(x)$ only.
\item 
in general (if $(1+|x|)u_0(x)\in L^1 (-\infty, \infty)$) the spectral (scattering) function $a(k)$ is analytic in 
${\mathbb C}_+$ whereas $b(k)$ is determined
for $x\in \mathbb{R} $ only. On the other hand,
if $u_0(x)$ is finitely supported, e.g., $\supp u_0 =[0,L]$, then $a(k)$
and $b(k)$ are entire functions satisfying the estimates
\begin{equation}\label{ab-estim}
    a(k) = 1 + O\left(\frac{1}{k}\right)+O\left(\frac{\eul^{2\ii k\theta}}{k}\right)  , \quad b(k) = O\left(\frac{1}{k}\right)+O\left(\frac{\eul^{2\ii k\theta}}{k}\right), \quad k\to\infty;
\end{equation}
\item 
as $k\to 0$, $\Phi_\pm(x,t,k) = \frac{i \alpha(x,t)}{k}
\begin{pmatrix}
    1 & 1 \\ -1 & -1
\end{pmatrix}+o(1)$ with some scalar $\alpha_\pm\in \mathbb R$ (understood columnwise).
It follows that 
\begin{equation}\label{ab-0}
    a(k)=\frac{\ii \rho}{k} +a_0+O(k), \quad b(k)=-\frac{\ii \rho}{k} +b_0+O(k), \quad k\to 0,
\end{equation}
where $a_0\in \mathbb R$ and $b_0\in \mathbb R$
with $\rho\in \mathbb R$ such that, gererically,    $\rho\ne 0$ (whereas in the non-generic case,
$\rho = 0$; in this case, $a_0^2-b_0^2=1$).

\end{enumerate}

\subsubsection{Pre-Riemann-Hilbert problem  in $x,t$ variables}

 Collecting the columns of the Jost solutions analytic in the
same half-plane and introducing $\Psi:=\begin{cases}
	\left(\frac{\Phi_-^{(1)}(x,t,k)}{a(k)}, \Phi_+^{(2)}(x,t,k)\right), & k\in \D{C}_+,\\
	\left(\Phi_+^{(1)}(x,t,k), \frac{\Phi_-^{(2)}(x,t,k)}{a^*(k)}\right), & z\in \D{C}_-
\end{cases} $ and $r(k):=\bar b(k)/a(k)$, 
the scattering relation \eqref{sr} can be rewritten as follows:
\begin{equation}\label{jump-psi}
    \Psi_+(x,t,k)=\Psi_-(x,t,k) J_{0\mathbb R}(k), \qquad 
k\in \mathbb{R}\setminus\{0\}
\end{equation}
with 
\begin{equation}\label{j0}
    J_{0\mathbb R}(k):=\begin{pmatrix}
    1-|r(k)|^2 & \bar r(k) \\ -r(k) & 1
\end{pmatrix}.
\end{equation}
The division by $a$ and $a^*$ above, which has been introduced 
in order to have $\det\Psi\equiv 1$, necessitates addressing the singularities of $\Psi$ at eventual zeros of $a$ 
in ${\mathbb{C}}_+$ (as well as the (associated by symmetry) zeros of  $a^*$ in ${\mathbb{C}}_-$), which are  eigenvalues of the operator generated by the first equation 
in \eqref{lax} (in what follows, we assume that the zeros are simple, which corresponds to the generic case). Indeed, if $a(k_j)=0$ at some $k_j=i\nu_j$, $j=1,\dots,N$, with 
$0<\nu_j<1/2$, then 
$\Phi_-^{(1)}(x,t,k_j)b_j = \Phi_+^{(2)}(x,t,k_j)$ with some $b_j$ and thus
\begin{equation}\label{res-psi1}
    \Res_{k=k_j} \Psi^{(1)}(x,t,k) = c_j \Psi^{(2)}(x,t,k_j) \quad \text{with}\ 
    c_j=\frac{1}{b_j\dot a(k_j)}.
\end{equation}
Similarly, 
\begin{equation}\label{res-psi2}
    \Res_{k=\bar k_j} \Psi^{(2)}(x,t,k) = \bar c_j \Psi^{(1)}(x,t,\bar k_j).
\end{equation}

Notice that in the case of finitely supported $u_0(x)$, $\{b_j\}$is given in terms 
of $b(k)$: 
\begin{equation}
    \label{b-bj}
    b_j=b(k_j)=-\frac{1}{b^*(k_j)},
\end{equation}
where the last equality is due to the determinant relation 
$\det s(k)=a^*(k)a(k)-b^*(k)b(k)\equiv 1$.

Introduce $M(x,t,k):= \Psi(x,t,k)e^{ikp(x,t,k)\sigma_3}$. Then the analytic properties
of $\Phi_\pm$ presented above allows \emph{characterizing} $M$ by 
a set of conditions involving $u_0(x)=u(x,0)$ only, through the associated spectral data.
\begin{proposition}\label{prop:RHP-xt}
The $2\times 2$ function $M(x,t,k)$ constructed from the Jost solutions of the 
Lax pair equations generated by a solution of the CH equation $u(x,t)$
can be characterized as that satisfying the following conditions:
\begin{enumerate}
    \item For all $x$ and $t$, $M(x,t,k)$ is a sectionally meromorphic function (w.r.t.     $\mathbb R$).
    \item The limiting values of $M$ as $k$ approaches $\mathbb R$ from the upper 
    and lover half-planes ($M_+$ and $M_-$ respectively) are related by the jump 
    condition
    \begin{equation}\label{jump-xt_line}
        M_+(x,t,k) = M_-(x,t,k)J_{\mathbb R}(x,t,k)\ \ \text{with}\ 
       J_{\mathbb R}(x,t,k)= e^{-ikp(x,t,k)\sigma_3}J_{0 \mathbb R}(k)  
       e^{ikp(x,t,k)\sigma_3},\ \ 
               k\in \mathbb R\setminus \{0\}.
    \end{equation}
  \item 
  as $k\to 0$, $M_+(k)(x,t,k) = \frac{i \alpha(x,t)}{k}
\begin{pmatrix}
    \delta & 1 \\ -\delta & -1
\end{pmatrix}+o(1)$,
where \[
\delta = \begin{cases}
    0, & \lim\limits_{k\to 0}ka(k)\ne 0 \ \ \text{and}\ 
    \lim\limits_{k\to 0}kb(k)\ne 0\ \text{(generic case)},\\
    1-\frac{b_0}{a_0}, & a_0= \lim\limits_{k\to 0}a(k), \ \ 
    b_0= \lim\limits_{k\to 0}b(k),
\end{cases}
\]
with some  $\alpha\in \mathbb R$.
\item $M\to I$ as $k\to\infty$.
\item 
\begin{equation}\label{M-x-sym}
    M(-k)=\begin{pmatrix}
	0 & 1 \\ 1 & 0
	\end{pmatrix} M(k) \begin{pmatrix}
	0 & 1 \\ 1 & 0
	\end{pmatrix}.
\end{equation}
\item 
$M^{(1)}$ has simple poles at 
$k_j=i\nu_j$, $j=1,\dots,N$, with 
$0<\nu_j<1/2$
and $M^{(2)}$ has simple poles at $k=\bar k_j=-i\nu_j$, and the residue conditions hold:
\begin{align}\label{M-x-res}
    \Res_{k=i\nu_j} M^{(1)}(x,t,k) & = c_j e^{-2\nu_j p(x,t,i\nu_j)}M^{(2)}(x,t,i\nu_j),\\
    \Res_{k=-i\nu_j} M^{(2)}(x,t,k) & = \bar c_j e^{-2\nu_j p(x,t,i\nu_j)} M^{(1)}(x,t,-i\nu_j),
\end{align}
where $c_j=\frac{b_j}{\dot a(k_j)}\in i{\mathbb R}$.
\item 
\begin{equation}\label{struc}
    M\left(\frac{i}{2}\right)=\frac{1}{2}\begin{pmatrix}
	q+\dfrac{1}{q} & q-\dfrac{1}{q} \\[5mm] q-\dfrac{1}{q} & q+\dfrac{1}{q}
	\end{pmatrix} \begin{pmatrix}
	f & 0 \\ 0 & f^{-1}	\end{pmatrix},
	\end{equation}
    with some  $q(x,t)>0$ and $f(x,t)>0$.
\end{enumerate}
Here $\alpha(x,t)$, $q(x,t)$, and $f(x,t)$ are not specified
whereas $a(k)$, $b(k)$, $\{\nu_j\}_1^N$, and $\{c_j\}_1^N$ are determined 
by $u_0(x)=u(x,0)$ through the solutions of the associated integral equations 
for  the Jost functions considered for $t=0$.
\end{proposition}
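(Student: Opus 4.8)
The plan is to verify the seven conditions one at a time, directly from the definition $M=\Psi e^{\ii kp\sigma_3}$ and the properties of the Jost solutions established above. The starting observation, which makes several items transparent, is that since $\Phi_\pm=\hat\Phi_\pm e^{-\ii kp\sigma_3}$ the oscillatory factors cancel columnwise in $M$: in $\D{C}_+$ one finds $M=\bigl(\hat\Phi_-^{(1)}/a,\ \hat\Phi_+^{(2)}\bigr)$ and in $\D{C}_-$ one finds $M=\bigl(\hat\Phi_+^{(1)},\ \hat\Phi_-^{(2)}/a^*\bigr)$. Because the kernel phase $p(x,t,k)-p(y,t,k)=\int_y^x\sqrt{m(\xi,t)+1}\,\dd\xi$ in \eqref{Phi-hat} is $k$-independent, the $\hat\Phi_\pm$ are analytic in their respective half-planes; hence $M$ is meromorphic there with poles only at the zeros of $a$ in $\D{C}_+$ (and of $a^*$ in $\D{C}_-$), which is condition~(1), and $\hat\Phi_\pm\to I$ together with $a,a^*\to1$ gives $M\to I$, which is condition~(4).

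Condition~(2) is obtained by conjugating the jump \eqref{jump-psi}: inserting $\Psi_+=\Psi_-J_{0\D{R}}$ into $M_\pm=\Psi_\pm e^{\ii kp\sigma_3}$ yields $M_+=M_-\bigl(e^{-\ii kp\sigma_3}J_{0\D{R}}e^{\ii kp\sigma_3}\bigr)=M_-J_{\D{R}}$, which is \eqref{jump-xt_line}. Condition~(6) is the residue bookkeeping already prepared in \eqref{res-psi1}--\eqref{res-psi2}: at a simple zero $k_j=\ii\nu_j$ of $a$ the factor $e^{\ii kp\sigma_3}$ is holomorphic and nonsingular, so $\Res_{k=\ii\nu_j}M^{(1)}=e^{-\nu_j p}\Res_{k=\ii\nu_j}\Psi^{(1)}=c_je^{-\nu_jp}\Psi^{(2)}(\ii\nu_j)$, and rewriting $\Psi^{(2)}(\ii\nu_j)=e^{-\nu_jp}M^{(2)}(\ii\nu_j)$ produces the factor $e^{-2\nu_jp}$ of \eqref{M-x-res}; the conjugate residue follows identically. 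Condition~(5) reflects a discrete symmetry of the Lax pair: a direct check gives $U(x,t,-k)=\sigma_1U(x,t,k)\sigma_1$ and $V(x,t,-k)=\sigma_1V(x,t,k)\sigma_1$ with $\sigma_1:=\left(\begin{smallmatrix}0&1\\1&0\end{smallmatrix}\right)$, while $p$ is even in $k$; consequently $G(x,t,k):=\sigma_1\Phi(x,t,k)\sigma_1$ solves \eqref{lax} at spectral parameter $-k$, and matching the normalizations at $x=\pm\infty$ (noting that $k\mapsto-k$ interchanges $\D{C}_\pm$, hence the two branches of $M$) propagates the relation to $M$.

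The first delicate point is condition~(3). Here one substitutes the $k\to0$ expansion $\Phi_\pm(x,t,k)=\tfrac{\ii\alpha_\pm}{k}\left(\begin{smallmatrix}1&1\\-1&-1\end{smallmatrix}\right)+o(1)$ and \eqref{ab-0} into the definition of $\Psi$, using $e^{\ii kp\sigma_3}\to I$ as $k\to0$ since $p$ is finite there. In the generic case $\rho\ne0$ the division by $a\sim\ii\rho/k$ cancels the $1/k$ singularity of the first column while the second column stays singular, giving $\delta=0$; in the non-generic case $\rho=0$ one has $a\to a_0$, both columns remain $O(1/k)$, and the ratio of their coefficients is fixed by the scattering relation $\Phi_+^{(2)}=b\,\Phi_-^{(1)}+a\,\Phi_-^{(2)}$ evaluated as $k\to0$ together with the constraint $a_0^2-b_0^2=1$, which yields $\delta=1-b_0/a_0$.

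Condition~(7) is the genuinely CH-specific step and I expect it to be the main obstacle. The point $k=\tfrac\ii2$ corresponds to $\lambda=0$, where the $x$-equation of \eqref{lax-prelim} degenerates to the $u$-independent $\psi_{xx}=\tfrac14\psi$ while the $t$-equation is singular, and the factorization \eqref{struc} is precisely the imprint of this degeneracy on $M$. To establish it I would evaluate the defining change of variables \eqref{lax-uv} at $k=\tfrac\ii2$ (where $\tfrac1{\ii k}=-2$ and the prefactor matrix is finite and invertible), read off that the ``rotation''-type factor carrying $q$ is governed by the diagonal $(m+1)^{\pm1/4}$ and that the diagonal factor $\mathrm{diag}(f,f^{-1})$ comes from the $\lambda=0$ limit of the normalization, and deduce $q,f>0$ from the positivity assumption $m+1>0$. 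The difficulty is to control the $\lambda=0$ limit cleanly, separating the regular part from the singular diagonal term; the natural tool is the regularized Lax pair \eqref{lax-0}, whose coefficients were introduced to be regular at $k\to\pm\tfrac\ii2$ apart from exactly such a diagonal term, which is what produces $\mathrm{diag}(f,f^{-1})$.
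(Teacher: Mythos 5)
Your proposal is correct and follows essentially the same route as the paper: conditions (1)--(6) are read off, exactly as the paper assembles them, from the columnwise definition of $M$, the scattering relation \eqref{sr}, the residue relations \eqref{res-psi1}--\eqref{res-psi2}, the symmetries of $U$ and $V$, and the $k\to 0$ expansions \eqref{ab-0} combined with $a_0^2-b_0^2=1$ (your computation of $\delta=1-b_0/a_0$ is the intended one). Your treatment of the structural condition \eqref{struc} at $k=\frac{\ii}{2}$, though only sketched, invokes precisely the mechanism the paper itself uses --- the relation $\Phi=Q\Phi_0 C$ to the regularized Lax pair \eqref{lax-0}, with $Q$ built from $(m+1)^{\pm 1/4}$ producing the $q$-factor and the diagonal term producing $\operatorname{diag}(f,f^{-1})$ --- for which the paper likewise supplies no details, deferring to \cite{BS08}.
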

\begin{remark}
    The last condition \eqref{struc} follows from the relationship between 
    the Jost solutions of the Lax pair \eqref{lax} and those
    of the Lax pair \eqref{lax-0} taking into account that the first
    equation in \eqref{lax-0} doesn't involve $u$ at $k=\frac{i}{2}$,
    see \cite{BS08}. Moreover, \eqref{struc} gives means for obtaining
    the solution $u(x,t)$ of the CH equation provided $M(x,t,k)$ is known:
    \[
    m(x,t)=q^4(x,t)-1.
    \]
\end{remark}
\begin{remark}
    Considering the problem on the line is important for considering
    the problem on an interval, in particular, the periodic problem,
    since it provides means to verify that a constructed solution 
    of the periodic problem indeed satisfies the prescribes initial conditions.
\end{remark}
\begin{remark}
    Conditions in Proposition \ref{prop:RHP-xt} cannot be immediately 
    interpreted as a Riemann-Hilbert problem
    for the problem on the line due to the exponentials 
    involving (through $p(x,t,k)$) the solution of  the CH equation
    $m(x,t)$ for all $t$ (not only for $t=0$ associated with the given initial data).
    The remedy is to introduce a new spatial variable, see Section \ref{sec:CH-y}
    below.
\end{remark}

\subsection{RH problem formalism for problems on an interval}\label{sec:RH-y-int}
\subsubsection{Dedicated solutions of the Lax pair equations}
The problem of adaptation of the Riemann-Hilbert formalism 
to the case of initial \emph{boundary} value problems for formally integrable 
nonlinear equations (having the Lax pair representation) has been systematically 
addressed since the end  of 1990s, see \cites{fokas1997,FIS,BFS03}. 
Particularly, various problems on an interval (with two boundaries in the 
$(x,t)$-plane was addressed in \cites{FIS,BFS06}. The main idea is to use both
equations from the Lax pair simultaneously, as scattering problems. With this respect,
the $x$-equation of the Lax pair gives rise to the scattering problem
associated with the initial data while the $t$-equation gives rise to 
two scattering problems associated with the boundary values at two ends 
of  the interval. Then it is possible to define a Riemann-Hilbert
problem, the data for which (jump and residue conditions) can be given
in terms of  spectral functions involved in the scattering problems  mentioned above.

If the initial boundary value problem is considered
        on the interval $x\in [0,L]$, then,
as  building elements for the associated RHP, one uses 
4 dedicated, simultaneous solutions 
		$\Phi_j$, $j=1,\dots,4$ of the Lax pair equations normalized at 4 points: $(0,T)$, $(0,0)$, $(L,0)$,  $(L,T)$ (with some $T>0$),
        where 
\begin{figure}
    \centering
    \includegraphics[scale=.6]{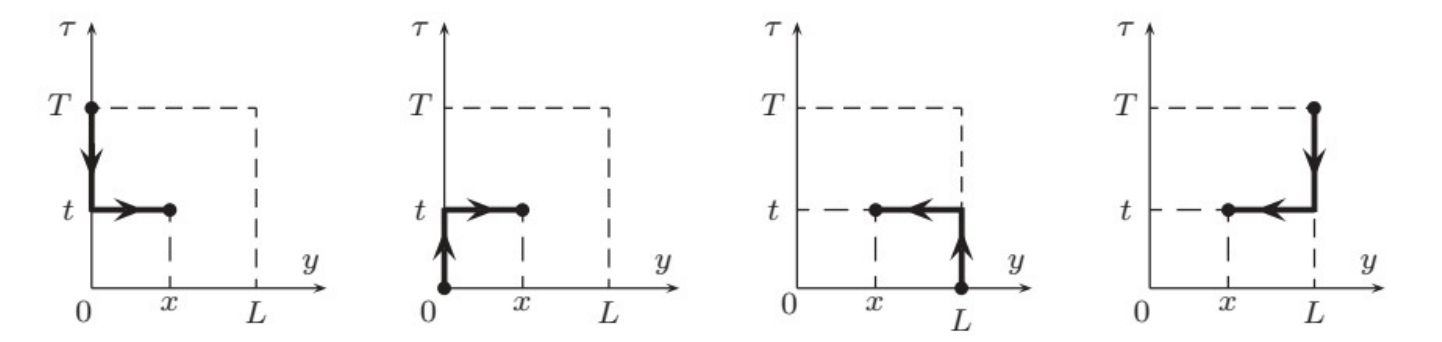}
    \caption{Paths of integration for $\Phi_j$}
    \label{fig:paths}
\end{figure}
\

In the case of the CH equation,     
	the integral equations determining the solutions $\Phi_j$ of \eqref{lax}
    are as follows:
    \begin{align}\label{Phi-j}
        \Phi_j(x,t,k) &= e^{-ikp(x,t,k)\sigma_3} +\int_{(x_j, t_j)}^{(x,t)}
	e^{ik(p(x,t,k)-p(\xi,\tau,k))\sigma_3}\left(
	 U\Phi_j(\xi,\tau,k)d\xi \right.\nonumber\\
	& \left.+  V\Phi_j(\xi,\tau,k)d\tau\right),
    \end{align}
where $p(x,t,k)$	is defined by \eqref{p}. 
Similarly to \eqref{sr}, functions $\Phi_j$ being the solutions of 
\emph{two} differential equations, can be related by matrices independent 
of $x$ and $t$:
\begin{align}\label{s-32}
	\Phi_3(x,t,k) & = \Phi_2(x,t,k)s(k), \\ \label{s-12}
	\Phi_1(x,t,k) & = \Phi_2(x,t,k)S(k), \\
    \label{s-43}
	\Phi_4(x,t,k) & = \Phi_3(x,t,k)e^{ik\theta \sigma_3}S_1(k)e^{-ik\theta \sigma_3}, 
	\end{align}
    where 
    \begin{equation}
        \theta:=p(L,0)= \int_0^L \sqrt{m(\xi,0)+1}\dd\xi
    \end{equation}
(the scattering relation \eqref{s-43} is chosen to have this form in order to provide 
$S_1(k)=S(k)$ in the periodic case, with
$u(0,t)=u(L,t)$, $u_x(0,t)=u_x(L,t)$, and $u_{xx}(0,t)=u_{xx}(L,t)$).

Equation \eqref{s-32} 
 can be seen as a scattering relation, for $k\in \mathbb R$, for the first equation ($x$-equation) in \eqref{lax}, with $u(x,0)$ continued by $0$ for $x \in \D{R}\setminus [0,L]$
 determining $s(k)=\Phi_3(0,0,k) = \begin{pmatrix}
 a^*(k) & b(k) \\ b^*(k) & a(k)
\end{pmatrix}$
while equations \eqref{s-12} and \eqref{s-43} can be viewed as the scattering relations for the $t$-equation in \eqref{lax}. 
More precisely, since $ V(0,t,k)$ is determined by $u(0,t)$, $u_x(0,t)$ and $u_{xx}(0,t)$,
it follows that $S(k)=\Phi_1(0,0,k)\equiv \begin{pmatrix}
			\overline{ A(\bar k )} & B(k) \\\overline{ B(\bar k)} & A(k)
		\end{pmatrix} $ is determined (considering \eqref{s-12} at $x=0)$
        by $u(0,t)$, $u_x(0,t)$ and $u_{xx}(0,t)$.
Similarly, 
        $S_1(k) = \Phi_4(L,0,k)e^{ik\theta \sigma_3}\equiv \begin{pmatrix}
			\overline{A_1(\bar k)} & B_1(k) \\ \overline{B_1(\bar k)} & A_1(k)
		\end{pmatrix} $ is determined (considering \eqref{s-43} at $L=0)$ 
        by $u(L,t)$, $u_x(L,t)$ and $u_{xx}(L,t)$.

 Notice that, similarly to $\Phi_\pm$ in the case of  the problem
 on the whole line, columns of $\Phi_j$ have well-controlled behavior as 
 $k\to\infty$ in the corresponding half-planes $\pm\Im k \ge 0 $
 provided $u(0,t)$ has a definite sign for all $t$.
 Particularly, if $u(0,t)\le 0$, then for all $x\in[0,L]$ and $t\in [0,T]$,
 \begin{itemize}
     \item 
     $\hat \Phi_2^{(1)}(x,t,k)$ and $\hat \Phi_4^{(2)}(x,t,k)$ approach 
     respectively $\begin{pmatrix}
         1 \\ 0
     \end{pmatrix}$ and $\begin{pmatrix}
         0 \\ 1
     \end{pmatrix}$ 
     as $k\to\infty$, $\Im k \ge 0 $.
     \item 
     $\hat \Phi_4^{(1)}(x,t,k)$ and $\hat \Phi_2^{(2)}(x,t,k)$ approach 
     respectively $\begin{pmatrix}
         1 \\ 0
     \end{pmatrix}$ and $\begin{pmatrix}
         0 \\ 1
     \end{pmatrix}$ 
     as $k\to\infty$, $\Im k \le 0 $.
 \end{itemize}
 At the same time, for particular $x$ and $t$ we have the following:
 as $k\to\infty$,
 \begin{itemize}
     \item 
     for $t=0$, 
     $\hat \Phi_3^{(1)}(x,0,k)\to \begin{pmatrix}
         1 \\ 0
     \end{pmatrix}$ for $\Im k \le 0 $ and 
     $\hat \Phi_3^{(2)}(x,0,k)\to \begin{pmatrix}
         0 \\ 1
     \end{pmatrix}$ for $\Im k \ge 0 $.
     \item 
     for $x=0$, 
     $\hat \Phi_1^{(1)}(0,t,k)\to \begin{pmatrix}
         1 \\ 0
     \end{pmatrix}$ for $\Im k \le 0 $ and 
     $\hat \Phi_1^{(2)}(0,t,k)\to \begin{pmatrix}
         0 \\ 1
     \end{pmatrix}$ for $\Im k \ge 0 $.
     \item 
     for $x=L$, 
     $\hat \Phi_3^{(1)}(L,t,k)\to \begin{pmatrix}
         1 \\ 0
     \end{pmatrix}$ for  $\Im k \ge 0 $ and 
     $\hat \Phi_3^{(2)}(L,t,k)\to \begin{pmatrix}
         0 \\ 1
     \end{pmatrix}$ for  $\Im k \le 0 $.
 \end{itemize}
Consequently, 
the spectral functions  have the following properties as $k\to\infty$:
\begin{itemize}
    \item 
    \begin{align}\label{s_inf}
a(k)&=1+O\left(\frac{1}{k}\right)+O\left(\frac{\eul^{2\ii k\theta}}{k}\right),\\\label{b_inf}
b(k)&=O\left(\frac{1}{k}\right)+O\left(\frac{\eul^{2\ii k\theta}}{k}\right).
\end{align}
Particularly, for $\Im k\ge 0$,
\begin{equation}\label{a-b-as}
    a(k)=1+O\left(\frac{1}{k}\right), \quad b(k)=O\left(\frac{1}{k}\right).
\end{equation}
\item For $\Im k\ge 0$,
\begin{align}\label{S_inf}
A(k)&=1+O\left(\frac{1}{k}\right), \quad A_1(k)=1+O\left(\frac{1}{k}\right),\\
B(k)&=O\left(\frac{1}{k}\right),\quad B_1(k)=O\left(\frac{1}{k}\right).
\end{align}
\end{itemize}

Due to the respective symmetries of $U$ and $V$, we have
\begin{equation}\label{sym_phi}
 \overline{\Phi_j(\bar k)}=\Phi_j(-k)=\begin{pmatrix}
        0&1\\1&0  \end{pmatrix}\Phi_j(k)\begin{pmatrix}
        0&1\\1&0
    \end{pmatrix}.   
\end{equation}
Consequently, the spectral functions satisfy the following symmetry relations:
\begin{align}\label{symm_a}
    &a(-k) = \overline{a(\bar{k})}, 
    \qquad 
    b(-k) = \overline{b(\bar{k})},\\\label{symm_A}
    &A(-k) = \overline{A(\bar{k})}, 
    \qquad 
    B(-k) = \overline{B(\bar{k})}
    ,\\\label{symm_A_1}
    &A_1(-k) = \overline{A_1(\bar{k})}, 
    \qquad 
    B_1(-k) = \overline{B_1(\bar{k})}
\end{align}
Moreover, since $\det \Phi_j\equiv 1$, the spectral functions obey the determinant identities
\begin{equation}\label{det_rel}
a(k)a^*(k) - b(k)b^*(k) = 1;\quad A(k)A^*(k) - B(k)B^*(k) = 1;\quad A_1(k)A_1^*(k) - B_1(k)B_1^*(k) = 1
\end{equation}

As for the behavior at $k=0$, we have 
\begin{equation}\label{Phi_at_0}
    \Phi_{ j}(x,t,k)=\frac{\ii}{k}c_j(x,t)\begin{pmatrix}
        1&1\\-1&-1   \end{pmatrix}+\begin{pmatrix}     a_{0j}&b_{0j}\\b_{0j}& a_{0j}
    \end{pmatrix}(x,t)+\ii k \begin{pmatrix}
        a_{1j}&b_{1j}\\-b_{1j}& -a_{1j}
    \end{pmatrix}(x,t)+ O(k^2), \quad k\to 0
    \end{equation}
    In particular, as $k\to 0$ we have
    \begin{subequations}\label{ab_at_0}
\begin{align}
    \label{a_at_0}
    & a(k)=-\frac{\ii}{k}\rho+a_{0}-\ii k a_{1}+O(k^2),\\\label{b_at_0}
    & b(k)=\frac{\ii}{k}\rho+b_{0}+\ii k b_{1}+O(k^2),\\
    \label{A*_at_0}
   & A^*(k)=\frac{\ii}{k}\tilde \rho+A_{0}+\ii k A_{1}+O(k^2),\\\label{B*_at_0}
   &  B^*(k)=-\frac{\ii}{k}\tilde \rho+B_{0}-\ii k B_{1}+O(k^2)    
\end{align}
  \end{subequations} 
where $\rho=c_3(0,0)$, $\tilde \rho=c_1(0,0)$, $a_0=a_{03}(0,0)$, $b_0=b_{03}(0,0)$, $A_0=a_{01}(0,0)$, $B_0=b_{01}(0,0)$, $a_1=a_{13}(0,0)$, $b_1=b_{13}(0,0)$, $A_1=a_{11}(0,0)$, and $B_1=b_{11}(0,0)$.

Now we notice that   
since  $V$ and $p$ are singular at $k=\pm\frac{i}{2}$ and $V$ is involved 
in all integral equations \eqref{Phi-j}, 
        in order to control the behavior of their solutions  
        as $k\to\pm\frac{i}{2}$ it is convenient to use the  Lax pair
        in the form \eqref{lax-0}.
Indeed, defining $\Phi_{0j}$ by (cf. \eqref{Phi-j})         
		\begin{align}\label{Phi-0-j}
        \Phi_{0j}(x,t,k) &= e^{-ikp_0(x,t,k)\sigma_3} +\int_{(x_j, t_j)}^{(x,t)}
	e^{ik(p_0(x,t,k)-p_0(\xi,\tau,k))\sigma_3}\left(
	 U_0\Phi_{0j}(\xi,\tau,k)d\xi \right.\nonumber\\
	& \left.+  V_0\Phi_j(\xi,\tau,k)d\tau\right)
    \end{align}
and $\hat\Phi_{0j}$ by $\Phi_{0j}=\hat\Phi_{0j} e^{-ikp_0(x,t,k)\sigma_3}$, where $p_0(x,t,k) = x-\frac{2t}{4k^2+1}$, and 
noticing that $U_0(x,t,\pm\frac{\ii}{2})\equiv 0$, we have the following 
properties of 
 $\hat\Phi_{0j}$ in vicinities $|k-\frac{\ii}{2}|<\epsilon$ and $|k+\frac{\ii}{2}|<\epsilon$
 of  $k=\pm\frac{i}{2}$ (follow from the Neumann series solutions of Volterra integral equations \eqref{Phi-j}): for all $x\in[0,L]$ and $t\in[0,T]$,

\begin{itemize}
    \item $\hat\Phi_{0 1}^{(1)}(x,t,k)$ 
    and $\hat\Phi_{0 3}^{(2)}(x,t,k)$ are analytic and bounded 
     in  $k\in\{ |k|<\frac{1}{2}, |k-\frac{\ii}{2}|<\epsilon\}$; moreover, 
     for $t>0$, $(\hat\Phi_{0 1}^{(1)}\ \hat\Phi_{0 3}^{(2)})(x,t,\frac{\ii}{2}) = I$, where the limit $k\to\frac{i}{2}$ is taken along any non-tangential direction.

\item $\hat\Phi_{0 2}^{(1)}(x,t,k)$ 
    and $\hat\Phi_{0 4}^{(2)}(x,t,k)$ are analytic and bounded 
     in  $k\in\{  |k|>\frac{1}{2}, |k-\frac{\ii}{2}|<\epsilon\}$; moreover, for $t>0$,
     $(\hat\Phi_{0 2}^{(1)}\ \hat\Phi_{0 4}^{(2)})(x,t,\frac{\ii}{2}) = I$.

      \item $\hat\Phi_{0 3}^{(1)}(x,t,k)$ 
    and $\hat\Phi_{0 1}^{(2)}(x,t,k)$ are analytic and bounded 
     in $k\in\{  |k|<\frac{1}{2}, |k+\frac{\ii}{2}|<\epsilon\}$;
     moreover, for $t<T$,
     $(\hat\Phi_{0 3}^{(1)}\ \hat\Phi_{0 1}^{(2)})(x,t,-\frac{\ii}{2}) = I$.
     
\item $\hat\Phi_{0 4}^{(1)}(x,t,k)$ 
    and $\hat\Phi_{0 2}^{(2)}(x,t,k)$ are analytic and bounded 
     in  $k\in\{  |k|>\frac{1}{2}, |k+\frac{\ii}{2}|<\epsilon\}$; moreover, 
     $(\hat\Phi_{0 4}^{(1)}\ \hat\Phi_{0 2}^{(2)})(x,t,-\frac{\ii}{2}) = I$.
\end{itemize}

Similarly to \eqref{s-32}--\eqref{s-43}, one can introduce the scattering
matrices $\tilde s$, $\tilde S$, and $\tilde S_1$, which relate 
respectively $\Phi_{0 3}$ and $\Phi_{0 2}$, $\Phi_{0 1}$ and $\Phi_{0 2}$,
and $\Phi_{0 4}$ and $\Phi_{0 3}$; we denote their entries by 
$\tilde a$, $\tilde b$, $\tilde A$, $\tilde B$, $\tilde A_1$, and $\tilde B_1$, respectively.

Due to the properties of $\Phi_{0j}$ near $k=\frac{i}{2}$,
the spectral functions $\tilde a(k)$
 and  $\tilde b(k)$ have the following properties as $k\to i/2$:
 \begin{equation}\label{tilde-a-b}
     \tilde a(k) = 1+ O\left(k-\frac{i}{2}\right), \quad 
     \tilde a^*(k) = 1+ O\left(k-\frac{i}{2}\right), \quad
     \tilde b(k) = O\left(k-\frac{i}{2}\right), \quad
     \tilde b^*(k) = O\left(k-\frac{i}{2}\right).
 \end{equation}

Since  $\Phi_0$ and $\Phi$  are related to the same system of ODEs \eqref{lax-prelim},
they are interrelated as follows:
\begin{equation}\label{Phi_0_inf}
 \Phi_{j}(x,t,k)=Q(x,t)\Phi_{0 j}(x,t,k) C_j(k)
\end{equation}
with some $C_j(k)$, where 
\begin{equation}\label{Q}
    Q(x,t)=\frac{1}{2}\begin{pmatrix}
                    q+\frac{1}{q} &  q-\frac{1}{q}\\
 q-\frac{1}{q} &  q+\frac{1}{q}
\end{pmatrix}
\end{equation}
with $q(x,t)=(m(x,t)+1)^{\frac{1}{4}}$.

Particularly, if $Q(0,0)=Q(L,0)=I$, then 
\[
 C_2=I, \quad C_3=\eul^{\ii k\left(L-\theta\right)\sigma_3}, \quad 
    s(k)=\tilde s(k)\eul^{\ii k\left(L-\theta\right)\sigma_3}
\]
   and thus
\begin{equation}\label{s-tilde-s}
a(k)=\tilde a(k)e^{ik(\theta-L)}, \quad b(k)=\tilde b(k)e^{ik(\theta-L)}.
\end{equation}
Moreover, from the  integral equation for $\hat \Phi_{03}(x,0,k)$ we  deduce that 

\[
\hat \Phi_{03}(0,0,k)=I-\ii \left(k-\tfrac{\ii}{2}\right)\begin{pmatrix}
    -\int_0^L m(y,0) \dd y & -\int_0^L m(y,0) \eul^{-y} \dd y \\
\int_0^L m(y,0) \eul^{y} \dd y & \int_0^L m(y,0) \dd y 
\end{pmatrix}+ O\left(\left(k-\tfrac{\ii}{2}\right)^2\right), \quad k\to\frac{\ii}{2}.
\]
Consequently,

\begin{align}\label{ab_at_i2}
a(k)&= \eul^{\frac{L-\theta}{2}}+\ii\left(k-\tfrac{\ii}{2}\right) \left( (\theta-L)
-\int_0^L m(y,0) \dd y  \right)\eul^{\frac{L-\theta}{2}} + O\left(\left(k-\tfrac{\ii}{2}\right)^2\right);\\
a^*(k)&= \eul^{\frac{\theta-L}{2}}+\ii \left(k-\tfrac{\ii}{2}\right)\left( -(\theta-L)+\int_0^L m(y,0) \dd y  \right)\eul^{\frac{\theta-L}{2}} + O\left(\left(k-\tfrac{\ii}{2}\right)^2\right);\\
b(k)&= \ii\eul^{\frac{L-\theta}{2}}\int_0^L m(y,0)\eul^{-y} \dd y\left(k-\tfrac{\ii}{2}\right)+O\left(\left(k-\tfrac{\ii}{2}\right)^2\right);\\
b^*(k)&= -\ii\eul^{\frac{\theta-L}{2}}\int_0^L m(y,0)\eul^{y} \dd y\left(k-\tfrac{\ii}{2}\right)+O\left(\left(k-\tfrac{\ii}{2}\right)^2\right).   
\end{align}

\subsubsection{Global relations}

Since the initial and boundary values $u(x,0)$, $u(0,t)$,
$u_x(0,t)$, $u_{xx}(0,t)$, $u(L,t)$,
$u_x(L,t)$, and $u_{xx}(L,t)$ cannot be prescribed as independent 
data for a well-posed problem for the CH equation on the interval, it follows 
that if these values correspond to some solution $u(x,t)$ of the CH equation in 
$x\in [0,L]$, $t\in [0,T]$, then the spectral functions
$a(k)$, $b(k)$, $A(k)$, $B(k)$, $A_1(k)$, $B_1(k)$ associated with 
them must be interrelated. 

Indeed, expressing $\hat \Phi_{4}(0,T,k)$ in terms of  $\hat \Phi_{1}(0,T,k)$ using \eqref{s-32}--\eqref{s-43} and taking into account that 
$\hat \Phi_{1}(0,T,k)=I$ we get
\begin{equation}\label{rel_}
    \eul^{ikp(0,T,k)\hat\sigma_3}\hat\Phi_{ 4}(0,T,k)=
    S^{-1}(k)s(k)e^{ik\theta\sigma_3}S_1(k)e^{-ik\theta\sigma_3}.
\end{equation}
Now, taking into account that $\hat\Phi_{ 4}(0,T,k)$ can be 
estimated from 
\begin{equation*}
\hat\Phi_{4}(0,T,k)=I-
\int_{0}^{L}
	\eul^{ik\int_0^y\sqrt{m(\xi,T)+1}\dd \xi\hat\sigma_3}
    ( U\hat\Phi_{4})(y,T,k) \dd y,
\end{equation*}
we deduce, in particular, that 
$\hat\Phi_{4}^{(12)} = O(1/k)$ as $k\to\infty$, $\Im k\ge 0$,
where $\hat\Phi_{4}^{(12)}$ is the 
 $(12)$ entry of $\hat\Phi_{4}(0,T,k)$.
Then, considering the $(12)$ entry of \eqref{rel_}, it follows  that 
\begin{equation}\label{gl_rel}
    B_1(k)(A(k)\overline{a(\bar k)}-B(k)\overline{b(\bar k)})\eul^{2ik\theta}+A_1(k)(A(k)b(k)-B(k)a(k))=\eul^{-2ik\int_0^T u(0,\zeta)\sqrt{m(0,\zeta)+1}\dd\zeta}\ord\left(\frac{1}{k}\right) 
\end{equation}
(notice that $A$, $B$, $A_1$, and $B_1$ are functions of $T$ as well:
$A=A(k,T)$, etc.)

Similarly, from the relations amongst $\Phi_{0j}$ we deduce that 

\begin{equation}\label{rel_i2}
    \eul^{ik\frac{T}{2\lambda}\sigma_3}\hat\Phi_{0 4}(0,T,k)\eul^{-ik\frac{T}{2\lambda}\sigma_3}=\tilde S^{-1}(k)\tilde s(k)\eul^{\ii k L\sigma_3}\tilde S_1(k)\eul^{-\ii k L\sigma_3}.
\end{equation}
Here, $\hat\Phi_{ 04}(0,T,k)$ can be 
estimated from 
\begin{equation*}
\hat\Phi_{04}(0,T,k)=I-
\int_{0}^{L}
	\eul^{-ik\xi\hat\sigma_3}
    ( U\hat\Phi_{04})(\xi,T,k) \dd \xi,
\end{equation*}
which gives that $\hat\Phi_{ 04}{(12)}(0,T,k) = O(k-\frac{i}{2})$ as $k\to\frac{\ii}{2}$
and thus

\begin{equation}\label{gl_rel_til}
    \tilde B_1(k)(\tilde A(k)\overline{\tilde a(\bar k)}-\tilde B(k)\overline{\tilde b(\bar k)})\eul^{2ikL}+\tilde A_1(k)(\tilde A(k)\tilde b(k)-\tilde B(k)\tilde a(k))
    =e^{-\frac{T}{\lambda}}\ord\left(k-\frac{\ii}{2}\right)
\end{equation}
as $ k\to\frac{\ii}{2}$, $|k|>\frac{1}{2}$.

We call the (asymptotic) relations \eqref{gl_rel} and \eqref{gl_rel_til} \emph{Global Relations}
(GRs).
A remarkable fact is the the Global Relations are not just necessary conditions 
on the spectral functions associated with the boundary values of a solution 
of a nonlinear equation in question: they \emph{characterize} these boundary values.
Indeed, one can prove (see, e.g., \cites{BFS06, fokas2004nonlinear, FIS}) that given the boundary values
such that the associated spectral functions satisfy the corresponding global relation,
there is a solution of (in general, \emph{overdetermined}!) initial  boundary value problem,
whose boundary values are as prescribed. In what follows, 
we will use the ideas of proof of this remarkable fact 
(based on appropriate  deformations of the associated Riemann--Hilbert problems)
when we show that 
our construction indeed gives a periodic solution of  the CH equation.

\subsubsection{Pre-Riemann-Hilbert problem  for the problem on an interval: jump conditions}
Similarly to the whole line problem,
it is possible to combine columns of the dedicated solutions of  the Lax pair 
equations in matrices, different in different domains of the 
complex plane of the spectral parameter $k$, such that (i) 
in each domain, corresponding matrix-valued function is bounded and continuous 
up to the boundary of the domain (including unbounded domains), (ii)
their values at the boundaries of the domains are related to each other in a multiplicative manner, 
by matrices that can be determined by the initial and boundary values of the 
solution of the CH equation,  and (iii) the determinant of all matrices equals $1$
for all $x$, $t$, and $k$.

In order to achieve this, still assuming that $u(0,t)\le 0$, introduce 
\[
d(k):=a(k)A^*(k)-b(k)B^*(k), \quad 
d_1(k)=a(k)A_1(k)+b^*(k)B_1(k)\eul^{2\ii k \theta},
\]
\[
\tilde d(k):=\tilde a(k)\tilde A^*(k)-\tilde b(k)\tilde B^*(k), \quad 
\tilde d_1(k):=\tilde a(k)\tilde A_1(k)+\tilde b^*(k)
\tilde B_1(k)\eul^{2\ii kL},
\]
and consider the following matrix-valued function defined in the domains 
in Figure \ref{fig:contour}:
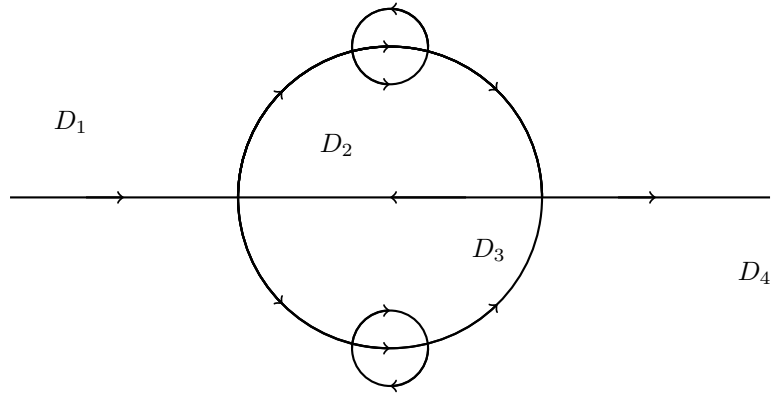
\begin{figure}
    \centering
    \begin{tikzpicture}
    \draw[thick] (0,0) circle (2);
 \draw[thick] (-5,0) -- (5,0);
 
    \draw[->, thick] (2,0) arc[start angle=0, end angle=270, radius=2];
    \draw[->, thick] (2,0) arc[start angle=0, end angle=225, radius=2];
    \draw[->, thick] (2,0) arc[start angle=0, end angle=315, radius=2];
        
    \draw[->, thick] (-2,0) arc[start angle=180, end angle=90, radius=2];
    \draw[->, thick] (-2,0) arc[start angle=180, end angle=45, radius=2];
        \draw[->, thick] (-2,0) arc[start angle=180, end angle=135, radius=2];
    
 \draw[->, thick] (3,0) -- (3.5,0);
  \draw[->, thick] (-4,0) -- (-3.5,0);

    \draw[->, thick] (1,0) -- (0,0);

 \draw[->, thick] (0.5,2) arc[start angle=0, end angle=270, radius=0.5];
 \draw[->, thick] (0.5,2) arc[start angle=0, end angle=90, radius=0.5];

  \draw[->, thick] (-0.5,-2) arc[start angle=180, end angle=90, radius=0.5];
 \draw[->, thick] (0.5,-2) arc[start angle=360, end angle=270, radius=0.5];

    \draw[thick] (0,2) circle (0.5);

        \draw[thick] (0,-2) circle (0.5);

        \node at (-4.2, 1) {$D_1$};

         \node at (4.8, -1) {$D_4$};

           \node at (-0.7, 0.7) {$D_2$}; 

         \node at (1.3,-0.7) {$D_3$}; 
\end{tikzpicture}
    \caption{Domains and the separating contour $\Sigma$}
    \label{fig:contour}
\end{figure}

\begin{equation}\label{N}
N(x,t,k)=\begin{cases}
\left( \frac{\hat\Phi_{ 2}^{(1)}(k)}{a(k)},\hat\Phi_{ 4}^{(2)}(k)\frac{a(k)}{d_1(k)}\right),\quad k\in\{ \Im k > 0, |k|>\frac{1}{2}, |k-\frac{\ii}{2}|>\epsilon\},\\

Q(x,t)\left( \frac{\hat\Phi_{0 2}^{(1)}(k)}{\tilde a(k)},\hat\Phi_{0 4}^{(2)}(k)\frac{\tilde a(k)}{\tilde d_1(k)}\right)\eul^{-\ii k(p_0(x,t,k)-p(x,t,k))\sigma_3},\quad k\in\{|k|>\frac{1}{2}, |k-\frac{\ii}{2}|<\epsilon\},\\

Q(x,t)\left( \frac{\hat\Phi_{0 1}^{(1)}(k)}{\tilde d(k)},\hat\Phi_{0 3}^{(2)}(k)\right)\eul^{-\ii k(p_0(x,t,k)-p(x,t,k))\sigma_3},\quad k\in\{  |k|<\frac{1}{2}, |k-\frac{\ii}{2}|<\epsilon\},\\

\left( \frac{\hat\Phi_{ 1}^{(1)}(k)}{d(k)},\hat\Phi_{ 3}^{(2)}(k)\right),\quad k\in\{ \Im k > 0, |k|<\frac{1}{2}, |k-\frac{\ii}{2}|>\epsilon\},\\

\left( \hat\Phi_{ 3}^{(1)}(k),\frac{\hat\Phi_{ 1}^{(2)}(k)}{d^*( k)}\right),\quad k\in\{ \Im k < 0, |k|<\frac{1}{2}, |k+\frac{\ii}{2}|>\epsilon\},\\

Q(x,t)\left( \hat\Phi_{0 3}^{(1)}(k),\frac{\hat\Phi_{0 1}^{(2)}(k)}{\tilde d^*(k)}\right)\eul^{-\ii k(p_0(x,t,k)-p(x,t,k))\sigma_3},\quad k\in\{  |k|<\frac{1}{2}, |k+\frac{\ii}{2}|<\epsilon\},\\

Q(x,t)\left( \frac{\tilde a^*(k)}{\tilde d^*_1(k)}\hat\Phi_{0 4}^{(1)}(k),\frac{\hat\Phi_{0 2}^{(2)}(k)}{\tilde a^*(k)}\right)\eul^{-\ii k(p_0(x,t,k)-p(x,t,k))\sigma_3},\quad k\in\{  |k|>\frac{1}{2}, |k+\frac{\ii}{2}|<\epsilon\},\\

\left(\frac{a^*( k)}{d^*_1( k)} \hat\Phi_{ 4}^{(1)}(k),\frac{\hat\Phi_{ 2}^{(2)}(k)}{a^*( k)}\right),\quad k\in\{ \Im k < 0, |k|>\frac{1}{2}, |k+\frac{\ii}{2}|>\epsilon\}.
\end{cases}
\end{equation} 
Then, by direct calculations, the boundary values of $N$ are related on $\Sigma$
by 
\begin{equation}\label{jump-xt}
N_-(x,t,k)=N_+(x,t,k)J(x,t,k), \quad k\in \Sigma,
\end{equation}
where $J(x,t,k) = e^{-ikp(x,t,k)\sigma_3}J_0(k)e^{ikp(x,t,k)}$ with 
\begin{equation}\label{j0-xt}
    J_0(k)=
    \begin{cases}
\begin{pmatrix}
    \ee^{\ii k(L-\theta)}& \Gamma_2(k)\\
   0 &\ee^{-\ii k(L-\theta)}
\end{pmatrix}, & |k|>\frac{1}{2}, |k-\frac{\ii}{2}|=\epsilon,\\
\begin{pmatrix}
          1& -\tilde\Gamma_1(k)\\
         0&1   
\end{pmatrix}
\begin{pmatrix}
          1& 0\\
         \tilde\Gamma(k)&1   
\end{pmatrix}, &   |k|=\frac{1}{2}, |k-\frac{\ii}{2}|<\epsilon,\\
\begin{pmatrix}
 \ee^{\ii k(L-\theta)}&0\\
  \Gamma_3(k)&\ee^{-\ii k(L-\theta)}
\end{pmatrix}, & |k|<\frac{1}{2}, |k-\frac{\ii}{2}|=\epsilon,\\
\begin{pmatrix}
          1& -\Gamma_1(k)\\
         0&1   
\end{pmatrix}
\begin{pmatrix}
          1& 0\\
         \Gamma(k)&1   
\end{pmatrix},& \Im k > 0, |k|=\frac{1}{2}, |k-\frac{\ii}{2}|>\epsilon,\\
\begin{pmatrix}
          1& -\Gamma_1(k)\\
         0&1   
\end{pmatrix}
\begin{pmatrix}
          1& -r^*(k)\\
         r(k)&1  - |r(k)|^2 
\end{pmatrix}
\begin{pmatrix}
          1& 0\\
         \Gamma^*_1(k)&1   
\end{pmatrix}, &  \Im k = 0, |k|>\frac{1}{2},\\
\begin{pmatrix}
          1& -\Gamma^*( k)\\
         0&1   
\end{pmatrix}
\begin{pmatrix}
          1 - |r(k)|^2& r^*(k)\\
         -r(k)&1  
\end{pmatrix}
\begin{pmatrix}
          1& 0\\
         \Gamma(k)&1   
\end{pmatrix}, &  \Im k = 0, |k|<\frac{1}{2}
\end{cases}
\end{equation}
on parts of $\Sigma$ in the upper half-plane 
whereas 
$J_0(k)=
\left(\begin{smallmatrix}
    0 & 1 \\ 1 & 0
\end{smallmatrix}\right)J_0^*(k)\left(\begin{smallmatrix}
    0 & 1 \\ 1 & 0
\end{smallmatrix}\right)$ for $k\in{\mathbb C}_-$. Here the following notations are used:
$
r(k)=\frac{b^*(k)}{a(k)}
$,
\begin{equation}\label{Ga}
\Gamma(k)=\frac{B^*(k)}{a(k)(a(k)A^*(k)-b(k)B^*(k))}
=\frac{R^*(k)}{ a(k)( a(k)
- b(k) R^*(k))},
\end{equation}
\begin{equation}\label{Ga1}
 \Gamma_1(k)=\frac{B_1(k)a(k)\eul^{2\ii k \theta}}{a(k)A_1(k)+b^*(k)B_1(k)\eul^{2\ii k \theta}}
 =\frac{ R_1(k) a(k)\ee^{2\ii k \theta}}{ a(k)+
 b^*(k)R_1(k)\ee^{2\ii k \theta}},       
\end{equation}
\begin{equation}\label{til-Ga}
    \tilde\Gamma(k)=\frac{\tilde B^*(k)}{\tilde a(k)(\tilde a(k)\tilde A^*(k)-\tilde b(k)\tilde B^*(k)}
=\frac{\tilde R^*(k)}{\tilde a(k)(\tilde a(k)
-\tilde b(k)\tilde R^*(k))},
\end{equation}
\begin{equation}\label{til-Ga1}
    \tilde \Gamma_1(k)=\frac{\tilde B_1(k)\tilde a(k)\eul^{2\ii k L}}{\tilde a(k)\tilde A_1(k)+\tilde b^*(k)\tilde B_1(k)\eul^{2\ii k L}}
=\frac{\tilde R_1(k)\tilde a(k)\ee^{2\ii k L}}{\tilde a(k)+\tilde b^*(k)\tilde R_1(k)\ee^{2\ii k L}}, 
\end{equation}
\begin{equation}\label{Ga2}
    \Gamma_2 = \frac{a\tilde a}{d_1\tilde d_1}(\tilde A_1 B_1-\tilde B_1 A_1)
\eul^{\ii k(L+\theta)}
= a\tilde a\eul^{\ii k (L+\theta)}\frac{R_1-\tilde R_1}{(a+b^*R_1\eul^{2\ii k \theta})
(\tilde a+\tilde b^*\tilde R_1\eul^{2\ii k L})},
\end{equation}
\begin{equation}\label{Ga3}
    \Gamma_3 =  \frac{\tilde A^* B^*-\tilde B^* A^*}{d\tilde d}
= \frac{R^*-\tilde R^*}{(a-bR^*)( \tilde a- \tilde b \tilde R^*)},
\end{equation}
where 
\begin{equation}\label{RRR}
    R(k):=\frac{B(k)}{A(k)}, \quad R_1(k):=\frac{B_1(k)}{A_1(k)}, \quad\tilde R(k)=\frac{\tilde B(k)}{\tilde A(k)}, \quad 
\tilde R_1(k)=\frac{\tilde B_1(k)}{\tilde A_1(k)}.
\end{equation}

Notice that the spectral functions associated with the boundary values 
enter the jump matrix $J_0$ trough the respective ratios only ($R$, $R_1$,
$\tilde R$, and $\tilde R_1$).

\subsubsection{Pre-Riemann-Hilbert problem  for the problem on an interval: 
symmetries}

The symmetries \eqref{sym_phi}, \eqref{symm_a}, \eqref{symm_A} and \eqref{symm_A_1} imply that   $N$ satisfies the  symmetries

\begin{equation}
    \label{sym_N}
    \overline{N(\bar k)}=N(-k)=\begin{pmatrix}
        0&1\\1&0
    \end{pmatrix}N(k)\begin{pmatrix}
        0&1\\1&0
    \end{pmatrix}.
\end{equation}

\subsubsection{Pre-Riemann-Hilbert problem  for the problem on an interval: 
behavior at $k=\infty$}

As $k \to \infty$, we have
\[
N(k) = I + O\!\left(\tfrac{1}{k}\right).
\]
Indeed,
from \eqref{s_inf} and \eqref{S_inf} it follows that
\[
d_1(k) = 1 + O\!\left(\tfrac{1}{k}\right) + O\!\left(\tfrac{e^{2 i k \theta}}{k}\right).
\]
Together with the asymptotic properties of $\hat \Phi_{ j}$, this yields the desired expansion for $N(k)$.

\subsubsection{Pre-Riemann-Hilbert problem  for the problem on an interval: 
residue  conditions}

Let $d_1(k)$ have simple zeros at $\nu_j \in D_1$, $d(k)$ simple zeros at $\lambda_j \in D_2$, and $a(k)$ simple zeros at $\kappa_j \in (0,\tfrac{i}{2})$ (\cite{Con97}).  
Then, by symmetry, $d^*_1( k)$ has simple zeros at $\bar\nu_j \in D_4$, $d^*( k)$ has simple zeros at $\bar\lambda_j \in D_3$, and $a^*( k)$ has simple zeros at $\bar\kappa_j \in (0,-\tfrac{i}{2})$.  
To fix the ideas, we assume that $a(k)$ and $d(k)$ do not have any common zeros. 

Under these assumptions, $N$ satisfies the residue conditions

\begin{align}\label{res_N-1}
\Res_{\nu_j}N^{(2)}&=N^{(1)}(\nu_j) \Res_{\nu_j} \Gamma_1 \eul^{-2\ii \nu_j p(\nu_j)}, ~ \Res_{\nu_j}\Gamma_1=\frac{B_1(\nu_j)  \eul^{2 \ii \nu_j \theta} a(\nu_j)}{\dot d(\nu_j)}, ~ \nu_j\in D_1, \\\label{res_N-2}
 \Res _{\lambda_j}N^{(1)}&=N^{(2)}(\lambda_j)  \Res _{\lambda_j} \Gamma \eul^{2\ii \lambda_j p(\lambda_j)}, ~
 \Res _{\lambda_j}\Gamma=\frac{B(\lambda_j)  }{a(\lambda_j)\dot d(\lambda_j)}, ~ \lambda_j\in D_2, \\\label{res_N-3}
 \Res _{\bar\nu_j}N^{(1)}&=N^{(2)}(\bar\nu_j)  \Res _{\bar\nu_j} \Gamma_1^* \eul^{2\ii \bar\nu_j p(\bar\nu_j)},~ \Res _{\bar\nu_j}\Gamma_1^*=\frac{B_1^*(\bar\nu_j)  \eul^{2 \ii \bar\nu_j \theta} a^*(\bar\nu_j)}{\dot d^*(\bar\nu_j)},~ \bar\nu_j\in D_4,\\\label{res_N-4}
 \Res _{\bar\lambda_j}N^{(2)}&=N^{(1)}(\bar\lambda_j)  \Res _{\bar\lambda_j} \Gamma^* \eul^{-2\ii \bar\lambda_j p(\bar\lambda_j)},~ \Res _{\bar\lambda_j}\Gamma^*=\frac{B^*(\bar\lambda_j)  }{a^*(\bar\lambda_j)\dot d^*(\bar\lambda_j)}
,~\bar\lambda_j\in D_3. 
\end{align}

Indeed, \eqref{s-32}--\eqref{s-43} imply
\begin{equation}\label{N_res_ch}
     \left( \frac{\hat\Phi_{ 1}^{(1)}(k)}{d(k)},\hat\Phi_{ 3}^{(2)}(k)\right)\eul^{-\ii k p \sigma_3}\begin{pmatrix}
         1& 0\\
         -\Gamma(k)&1
     \end{pmatrix}\eul^{\ii k p \sigma_3}= \left( \frac{\hat\Phi_{ 2}^{(1)}(k)}{a(k)},\hat\Phi_{ 3}^{(2)}(k)\right)\eul^{-\ii k p \sigma_3}\begin{pmatrix}
         1& -\Gamma_1(k)\\
         0&1     \end{pmatrix}\eul^{\ii k p \sigma_3}.
\end{equation}
First, consider $D_1$. Substituting  $\Gamma_1$ defined in \eqref{Ga1} into the second column of \eqref{N_res_ch} yields  
\[
d_1(k) \hat\Phi_{ 3}^{(2)} (x,t,k)
   = -B_1(k) e^{2 i k \theta} e^{-2 i k p(x,t,k)}\hat\Phi_{ 2}^{(1)} + a(k)\hat\Phi_{ 4}^{(2)}(x,t,k) .
\]
At $k=\nu_j$ with $d_1(\nu_j)=0$, this reduces to
\begin{equation}\label{f_res_ch_1}
  \hat\Phi_{ 4}^{(2)}(\nu_j) 
   = \frac{\hat\Phi_{ 2}^{(1)}(\nu_j)}{a(\nu_j)} B_1(\nu_j) e^{2 i \nu_j \theta} e^{-2 i \nu_j p(\nu_j)}.  
\end{equation}
Combining \eqref{f_res_ch_1} with $\Res_{\nu_j} N^{(2)} 
   = \hat\Phi_{4}^{(2)}(\nu_j)\frac{a(\nu_j)}{\dot d_1(\nu_j)}$, \eqref{res_N-1} follows.

Similarly, consider $D_2$. Substituting  $\Gamma$ defined in \eqref{Ga} into the second column of \eqref{N_res_ch} yields  
\[
d(k)\frac{\hat\Phi_{ 2}^{(1)}(x,t,k)}{a(k)}=\hat\Phi_{ 1}^{(1)}(x,t,k)-\frac{B(k)}{a(k)}\eul^{2\ii k p }\hat\Phi_{ 3}^{(2)}(x,t,k) .
\]
At $k=\lambda_j$ with $d(\lambda_j)=0$, we obtain
\begin{equation}\label{f_res_ch_2}
  \hat\Phi_{ 1}^{(1)}(\lambda_j)=\hat\Phi_{ 3}^{(2)}(\lambda_j)\frac{B(\lambda_j)}{a(\lambda_j)}\eul^{2\ii \lambda_j p(\lambda_j)}.  
\end{equation}
Combining \eqref{f_res_ch_2} with $\Res_{\lambda_j} N^{(1)} 
   = \frac{\hat\Phi_{ 1}^{(1)}(\lambda_j)}{\dot d(\lambda_j)}$, \eqref{res_N-2} follows.

The residue conditions in the lower half-plane can be obtained analogously. 

\subsubsection{Pre-Riemann-Hilbert problem  for the problem on an interval: 
behavior at $k=0$}

As $k \to 0$, relations\eqref{ab_at_0} yield the expansions
\begin{align}
    \label{d_at_0}
  & d(k)=\frac{\ii}{k}d_{-1}+d_0+O(k), \qquad d_{-1}\in \mathbb{R}, ~ d_{0}\in \mathbb{R},\\\label{D_at_0}
& (a^*B^*-b^*A^*)(k)=- \frac{\ii}{k}d_{-1}+D_0+O(k), \qquad d_{-1}\in \mathbb{R}, ~ D_{0}\in \mathbb{R}.
\end{align}
Moreover, the coefficient $d_{-1}$ is given by $d_{-1}=-\rho( A_0+B_0)+\tilde\rho (a_0+b_0)$.

If $d_{-1}\neq 0$ (generic case), then \eqref{Phi_at_0} together with
\eqref{d_at_0} implies that 
\begin{equation*}
  N(x,t,k)=  \frac{\ii}{k} c_3(x,t)\begin{pmatrix}
        0&1\\0&-1
    \end{pmatrix}+O(1), \quad k\to 0, \quad k\in\mathbb{C}_+.
\end{equation*}

If $d_{-1}= 0$ (non-generic case), then from
\eqref{s-32}, \eqref{s-12}, \eqref{d_at_0}, \eqref{D_at_0} we obtain \[c_1(x,t)=\delta c_3(x,t),\quad\delta = d_0+D_0,\]
and combining this fact with
\eqref{Phi_at_0} and
\eqref{d_at_0} gives
\begin{equation*}
  N(x,t,k)=  \frac{\ii c_3(x,t)}{k} \begin{pmatrix}
        \frac{\delta}{d_0}&1\\- \frac{\delta}{d_0}&-1
    \end{pmatrix}+O(1), \quad k\to 0, \quad k\in\mathbb{C}_+.
\end{equation*}

\subsubsection{Pre-Riemann-Hilbert problem  for the problem on an interval: 
behavior at $k=\frac{\ii}{2}$}

Expanding $\eul^{-\ii k(p_0(x,t,k)-p(x,t,k))}$ as $k\to \frac{\ii}{2}$, we obtain
\begin{equation}\label{e_i}
   \eul^{-\ii k(p_0(x,t,k)-p(x,t,k))}= \eul^{-\frac{1}{2}(\int_0^x (\sqrt{m(\xi,t)+1}-1)\dd\xi-\int_0^t u(0,\zeta)\sqrt{m(0,\zeta)+1}\dd\zeta)}+O\left(k-\tfrac{\ii}{2}\right).
\end{equation}

For $\{|k|>\frac{1}{2}\}$, the properties of 
 $\hat\Phi_{0j}$ together with the definitions of $\tilde s$ and $\tilde S$ imply 
\begin{equation}\label{d1_i2}
     \tilde d_1=1+O\left(k-\tfrac{\ii}{2}\right).
\end{equation}
Combining \eqref{e_i}, \eqref{d1_i2} and the behaviour of $\hat\Phi_{0j}$ at $\frac{\ii}{2}$, we obtain
\begin{equation}\label{Psi_i2_1}
N(x,t,k)=Q(x,t)\eul^{-\frac{1}{2}(\int_0^x \sqrt{m(\xi,t)+1}\dd\xi-\int_0^t u(0,\zeta)\sqrt{m(0,\zeta)+1}\dd\zeta-x)\sigma_3}+O\left(k-\tfrac{\ii}{2}\right).
\end{equation}

For $\{|k|<\frac{1}{2}\}$, the properties of 
 $\hat\Phi_{0j}$ and the definitions of $\tilde s$ and $\tilde S$ yield
\begin{equation}\label{d_i2}
     \tilde d=1+O\left(k-\tfrac{\ii}{2}\right).
\end{equation}
Using \eqref{e_i}, \eqref{d1_i2} and the behaviour of $\hat\Phi_{0j}$ at $\frac{\ii}{2}$ we obtain the same expansion as in \eqref{Psi_i2_1}.

\section{The CH equation,  associated Lax pair, and  RH formalism in $y,t$ variables}\label{sec:CH-y}
\subsection{The CH equation and the  associated Lax pair in $y,t$ variables}
In order to have the data for the RH problem to 
be  explicitly determined by the initial data only,  
it is convenient to introduce the new space variable $y=y(x,t)$ in such a way that 

\begin{subequations}\label{y-xt}
\begin{align}
\label{y_x} y_x & = \sqrt{m+1},\\
   \label{y_t} y_t &=-u \sqrt{m+1}
\end{align}
    \end{subequations}
    (equations \eqref{y_x} and \eqref{y_t} are compatible due to \eqref{CH-m1}).
Particularly, for the problem on the line, this can be done by setting
\begin{equation}
    y(x,t)=\int_{-\infty}^x \sqrt{m(\xi,t)+1}\dd\xi
\end{equation}
whereas for the problem on the interval $x\in [0,L]$ we set
\begin{equation}\label{y}
    y(x,t)=\int_0^x \sqrt{m(\xi,t)+1}\dd\xi-\int_0^t u(0,\zeta)\sqrt{m(0,\zeta)+1}\dd\zeta.
\end{equation}
Then, differentiating the identity $x(y(x,t),t)=x$, we obtain the equations characterizing the 
inverse mapping $x=x(y,t)$:
\begin{subequations}\label{x-yt}
\begin{align}
\label{x_y} x_y & = \frac{1}{\sqrt{\hat m+1}},\\
   \label{x_t} x_t &=\hat u,
\end{align}
    \end{subequations}
where $\hat u (y,t) = u(x(y,t),t)$ and  $\hat m (y,t) = m(x(y,t),t)$.

Notice that for the problem on $[0,L]$ with the periodic conditions, one has
\begin{equation}\label{y_0L}
    y(L,t)= y(0,t)+\theta\text{ for all } t,
\end{equation}
where
$\theta= \int_0^L \sqrt{m(\xi,0)+1}\dd\xi $.

\begin{proposition} (CH equation in the $(y,t)$ variables) Let $u(x,t)$ and $m(x,t)$ ($m(x,t)+1>0$) satisfy \eqref{CH-m1} and let $y(x,t)$ be 
such that \eqref{y-xt} hold.
Then the CH equation \eqref{CH-m1} in the $(y,t)$ variables reads as the following system of equations
for  $\hat m(y,t):=m(x(y,t),t)$, $\hat u(y,t):=u(x(y,t),t)$, and 
$\hat v(y,t):=u_x(x(y,t),t)$:

\begin{subequations}\label{CH_in_y}
\begin{align}\label{CH_in_y-1}
(\sqrt{\hat m+1})_t&=-\hat v\sqrt{\hat m+1},\\\label{CH_in_y-2}
\hat v&=\hat u_y\sqrt{\hat m+1},\\\label{CH_in_y-3}
\hat m&=\hat u-\hat v_y\sqrt{\hat m+1}.
\end{align}
\end{subequations}
    
\end{proposition}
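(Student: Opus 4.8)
The plan is to reduce the whole statement to two elementary transformation rules between the $(x,t)$ and $(y,t)$ derivatives, and then to invoke the CH equation \eqref{CH-m1} exactly once. From \eqref{y_x} (equivalently from \eqref{x_y}) the spatial derivatives are related, for any $f(x,t)$ with $\hat f(y,t):=f(x(y,t),t)$, by
\[
f_x = \hat f_y \sqrt{\hat m+1},
\]
since $\hat f_y = f_x\, x_y = f_x/\sqrt{\hat m+1}$. For the time derivative at fixed $y$, differentiating $\hat f(y,t)=f(x(y,t),t)$ and using $x_t=\hat u$ from \eqref{x_t} gives the convective relation
\[
\hat f_t = \bigl(f_t + u f_x\bigr)\big|_{x=x(y,t)},
\]
where $f_t$ on the right denotes the derivative at fixed $x$. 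These two identities are the only tools needed.

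Given them, equations \eqref{CH_in_y-2} and \eqref{CH_in_y-3} are purely kinematic. First I would apply the spatial rule to $f=u$, obtaining $\hat u_y\sqrt{\hat m+1}=u_x=\hat v$, which is \eqref{CH_in_y-2}. Next, noting that $\hat v=\widehat{u_x}$, I would apply the same rule to $f=u_x$, so that $\hat v_y\sqrt{\hat m+1}=u_{xx}$; subtracting from $\hat u=u$ then yields $\hat u-\hat v_y\sqrt{\hat m+1}=u-u_{xx}=m=\hat m$, which is \eqref{CH_in_y-3}. Neither of these invokes the dynamics; they merely re-express $u_x$ and $u_{xx}$ through $y$-derivatives along the characteristics.

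The only dynamical content sits in \eqref{CH_in_y-1}, and this is where the CH equation enters. Applying the convective relation to $f=\sqrt{m+1}$, whose hatted version is $\sqrt{\hat m+1}$, gives
\[
(\sqrt{\hat m+1})_t = \bigl((\sqrt{m+1})_t + u(\sqrt{m+1})_x\bigr)\big|_{x=x(y,t)}.
\]
Then I would substitute the CH equation \eqref{CH-m1} in the form $(\sqrt{m+1})_t=-(u\sqrt{m+1})_x=-u_x\sqrt{m+1}-u(\sqrt{m+1})_x$. The two terms $\pm\, u(\sqrt{m+1})_x$ cancel, leaving $-u_x\sqrt{m+1}$, which at $x=x(y,t)$ equals $-\hat v\sqrt{\hat m+1}$; this is exactly \eqref{CH_in_y-1}.

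The computation is short, so the only real care required is bookkeeping: the time derivative on the left of \eqref{CH_in_y-1} is taken at fixed $y$, whereas \eqref{CH-m1} is written with the time derivative at fixed $x$, and the single place these two differ is precisely the convective term $u(\sqrt{m+1})_x$ that the characteristic change of variables is designed to absorb. Establishing the convective relation cleanly—correctly using $x_t=\hat u$ and distinguishing $f_t$ at fixed $x$ from $\hat f_t$ at fixed $y$—is therefore the one step where a sign or a fixed-variable slip would propagate, and I would verify it first before carrying out the three substitutions above.
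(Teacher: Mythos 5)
Your proposal is correct and follows essentially the same route as the paper's proof: the chain-rule relations $f_x=\hat f_y\sqrt{\hat m+1}$ and $\hat f_t=(f_t+u f_x)|_{x=x(y,t)}$ (via \eqref{x_y} and $x_t=\hat u$ from \eqref{x_t}), with the CH equation \eqref{CH-m1} invoked once so that the convective term $u(\sqrt{m+1})_x$ cancels to give \eqref{CH_in_y-1}, while \eqref{CH_in_y-2} and \eqref{CH_in_y-3} follow kinematically by applying the spatial rule to $u$ and $u_x$. Your explicit separation of the two transformation rules and the remark that only \eqref{CH_in_y-1} carries dynamical content is a clean presentation of exactly the paper's computation.
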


\begin{proof}
As we discussed above, \eqref{y-xt} implies \eqref{x-yt}.
 Substituting $\left(\sqrt{m+1}\right)_t=-\left(u\sqrt{m+1}\right)_x$ from \eqref{CH-m1} and $x_t=\hat u$ from \eqref{x_t} into the equality
\[
\left(\sqrt{\hat m(y,t)+1}\right)_t=\left. \left(\left(\sqrt{ m(x,t)+1}\right)_x x_t(y,t)+\left(\sqrt{ m(x,t)+1}\right)_t\right)\right|_{x=x(y,t)}
\]
we get
\[
\left(\sqrt{\hat m(y,t)+1}\right)_t=-\hat u_x(x(y,t),t)\sqrt{m(x(y,t),t)+1}=-\hat v(y,t)\sqrt{\hat m(y,t)+1}
\]
and thus \eqref{CH_in_y-1} follows.

Now, substituting $x_y=\frac{1}{\sqrt{\hat m+1}}$ from \eqref{x_y} into the equality
$\hat u_y(y,t)=\left. \left(u_x(x,t)x_y(y,t)\right)\right|_{x=x(y,t)}$
we get \eqref{CH_in_y-2}.

As for \eqref{CH_in_y-3}, it follows from substituting \eqref{x_y} into
$
\hat v_y(y,t)=\left. \left(u_{xx}(x,t) x_y(y,t)\right)\right|_{x=x(y,t)}$.
\end{proof}

Similarly, the reverse  change of variable $(y,t)\mapsto(x,t)$ 
reduces \eqref{CH_in_y} to \eqref{CH-m1} with 
$m(x,t):=\hat m(y(x,t),t)$ and $u(x,y):=\hat u(y(x,t),t)$. 
\begin{proposition}
    Let $\hat u(y,t)$, $\hat v(y,t)$,  and $\hat m(y,t)$ with $\hat m(y,t)+1>0$ satisfy 
    \eqref{CH_in_y} and let $x(y,t)$ be 
such that \eqref{x-yt} holds. Define $u(x,t):=\hat u(y(x,t),t)$ and $m(x,t):=\hat m(y(x,t),t)$.
Then \eqref{CH-m1} holds for $u(x,t)$ and $m(x,t)$.
\end{proposition}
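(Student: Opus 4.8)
The plan is to mirror the previous proposition, running the chain rule in the reverse direction. First I would record that the change of variables is genuinely invertible: since $\hat m+1>0$, equation \eqref{x_y} gives $x_y=1/\sqrt{\hat m+1}>0$, so $y\mapsto x(y,t)$ is strictly increasing for each fixed $t$ and admits a smooth inverse $y=y(x,t)$. Differentiating the identity $x(y(x,t),t)=x$ in $x$ and in $t$ and inserting \eqref{x-yt}, I would recover \eqref{y-xt}, i.e.\ $y_x=\sqrt{m+1}$ and $y_t=-u\sqrt{m+1}$, where throughout $m(x,t)=\hat m(y(x,t),t)$ and $u(x,t)=\hat u(y(x,t),t)$ as prescribed.

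Next I would establish the constitutive relation $m=u-u_{xx}$. Applying the chain rule to $u(x,t)=\hat u(y(x,t),t)$ and using $y_x=\sqrt{m+1}$ gives $u_x=\hat u_y\sqrt{m+1}$, which by \eqref{CH_in_y-2} equals $\hat v$ evaluated along $y=y(x,t)$; hence the auxiliary quantity $v(x,t):=\hat v(y(x,t),t)$ satisfies $v=u_x$. Differentiating once more, $u_{xx}=\partial_x\bigl(\hat v(y(x,t),t)\bigr)=\hat v_y\sqrt{m+1}$, and \eqref{CH_in_y-3} then yields $m=\hat u-\hat v_y\sqrt{\hat m+1}=u-u_{xx}$.

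It remains to verify the evolution equation. Writing $\mu:=\sqrt{m+1}$ and $\hat\mu:=\sqrt{\hat m+1}$, so that $\mu(x,t)=\hat\mu(y(x,t),t)$, I would compute $\mu_t=\hat\mu_y\,y_t+(\partial_t\hat\mu)|_{y=y(x,t)}$ (the first $t$-derivative taken at fixed $x$, the last at fixed $y$). Using $y_t=-u\mu$, the identity $\mu_x=\hat\mu_y\,y_x=\hat\mu_y\,\mu$ so that $\hat\mu_y=\mu_x/\mu$, and \eqref{CH_in_y-1} in the evaluated form $(\partial_t\hat\mu)|_{y=y(x,t)}=-v\mu=-u_x\mu$, the two contributions combine to $\mu_t=-u\mu_x-u_x\mu=-(u\mu)_x$, which is exactly the first equation in \eqref{CH-m1}; together with $m=u-u_{xx}$ this gives \eqref{CH-m1}.

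These are all elementary chain-rule manipulations, so I do not expect a genuine obstacle. The one point requiring care is the bookkeeping of the two distinct $t$-derivatives (at fixed $y$ versus at fixed $x$) and the consistent evaluation of the $(y,t)$-functions along $y=y(x,t)$; the hypothesis $\hat m+1>0$ is precisely what turns the map into a diffeomorphism and thereby legitimizes the whole computation.
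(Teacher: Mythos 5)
Your proposal is correct and takes essentially the same route as the paper's proof: deduce \eqref{y-xt} from \eqref{x-yt}, use the chain rule with \eqref{CH_in_y-2} and \eqref{CH_in_y-3} to obtain $u_x=\hat v$ and $m=u-u_{xx}$, then split the $t$-derivative of $\sqrt{m+1}$ into its fixed-$y$ part (handled by \eqref{CH_in_y-1}) and the $y_t$ contribution to arrive at $(\sqrt{m+1})_t=-(u\sqrt{m+1})_x$. Your opening remark on the strict monotonicity of $y\mapsto x(y,t)$ merely makes explicit an invertibility point the paper leaves implicit.
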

\begin{proof}
Similarly to above, \eqref{x-yt} implies \eqref{y-xt}.
Then 
\[
u_x=\hat v_y y_x = \frac{\hat v}{\sqrt{m+1}}\sqrt{m+1} = \hat v,
\]
where we have used \eqref{CH_in_y-2} and \eqref{y_x}.
Further, differentiating this w.r.t. $x$ and using \eqref{CH_in_y-3} we get
\[
u_{xx}=\hat v_y y_x= \hat v_y \sqrt{m+1} = u - m
\]
and thus $m(x,t)=u(x,t)-u_{xx}(x,t)$.

Finally, using \eqref{CH_in_y-3} and \eqref{y_t} we have
\begin{align*}
\left(\sqrt{m+1}\right)_t & = \left(\sqrt{\hat m+1}\right)_t + \left(\sqrt{\hat m+1}\right)_y 
y_t = -\hat v \sqrt{m+1} + \left(\sqrt{m+1}\right)_x x_y (-u\sqrt{m+1}) \\
& = -u_x \sqrt{m+1} -u \left(\sqrt{m+1}\right)_x = - \left(u\sqrt{m+1}\right)_x.
\end{align*}

\end{proof}

Now let us show that a solution of the periodic problem for the CH equation 
in the $x,t$ variables \eqref{CH-m1} gives rise to a periodic solution 
of the CH equation 
in the $y,t$ variables \eqref{CH_in_y} and vice versa. 
\begin{proposition}\label{prop:periodic1}
    Let  $u(x,t)$ and $m(x,t)$ (satisfying $m(x,t)+1>0$) be the components of a periodic solution of  \eqref{CH-m1} with the period $L$, 
        and let $y(x,t)$ be defined by \eqref{y}. Then 
    $\hat m(y,t):=m(x(y,t),t)$, $\hat u(y,t):=u(x(y,t),t)$, and 
$\hat v(y,t):=u_x(x(y,t),t)$ are the components of a periodic solution of 
\eqref{CH_in_y}, with the period
\begin{equation}\label{theta}
    \theta:= \int_0^L \sqrt{m(\xi,0)+1} d\xi.
\end{equation}
    
\end{proposition}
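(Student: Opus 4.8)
The Proposition on the CH equation in the $(y,t)$ variables stated above already shows that $\hat m$, $\hat u$, $\hat v$ solve the system \eqref{CH_in_y}, so the only new content is the $\theta$-periodicity in $y$. The plan is to understand how the change of variables $x\mapsto y(x,t)$ defined by \eqref{y} intertwines the $L$-periodicity in $x$, which $u$, $m$, and $u_x$ all inherit from the periodic solution, with $\theta$-periodicity in $y$. The guiding identity is \eqref{y_0L}, which I would first upgrade to the full statement $y(x+L,t)=y(x,t)+\theta$ for all $x$ and $t$.

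The one substantive step is to verify that $\theta$ is in fact conserved, i.e. that $\int_0^L\sqrt{m(\xi,t)+1}\,\dd\xi$ does not depend on $t$. For this I would differentiate under the integral sign and use the CH equation in the form \eqref{CH-m1}, namely $(\sqrt{m+1})_t=-(u\sqrt{m+1})_x$, to get
\[
\frac{\dd}{\dd t}\int_0^L\sqrt{m(\xi,t)+1}\,\dd\xi
=-\int_0^L\bigl(u\sqrt{m+1}\bigr)_x\,\dd\xi
=-\Bigl[u\sqrt{m+1}\Bigr]_{x=0}^{x=L}=0,
\]
where the boundary term vanishes because $u\sqrt{m+1}$ is built from the $L$-periodic functions $u$ and $m$ and is therefore itself $L$-periodic in $x$. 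Hence the integral equals its value at $t=0$, which is $\theta=\int_0^L\sqrt{m(\xi,0)+1}\,\dd\xi$.

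Given this, the increment of $y$ over one spatial period is computed directly from \eqref{y_x}: since $y_x=\sqrt{m+1}$ is $L$-periodic in $x$,
\[
y(x+L,t)-y(x,t)=\int_x^{x+L}\sqrt{m(\xi,t)+1}\,\dd\xi=\int_0^L\sqrt{m(\xi,t)+1}\,\dd\xi=\theta.
\]
Because $m+1>0$, the map $x\mapsto y(x,t)$ is strictly increasing for each fixed $t$, hence a bijection, so the inverse $x=x(y,t)$ is well defined; inverting the relation just obtained yields $x(y+\theta,t)=x(y,t)+L$.

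Finally I would combine this with the $L$-periodicity of $m$, $u$, and $u_x$ in $x$ to conclude
\[
\hat m(y+\theta,t)=m\bigl(x(y,t)+L,t\bigr)=m\bigl(x(y,t),t\bigr)=\hat m(y,t),
\]
and likewise $\hat u(y+\theta,t)=\hat u(y,t)$ and $\hat v(y+\theta,t)=\hat v(y,t)$, which is the asserted $\theta$-periodicity. I expect the only real obstacle to be the $t$-independence of $\theta$; everything else is bookkeeping with the change of variables, the one point requiring care being the invertibility of $x\mapsto y(x,t)$, which is guaranteed by the standing assumption $m+1>0$.
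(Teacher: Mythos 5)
Your proof is correct and follows essentially the same route as the paper: both reduce the claim to showing that the increment of $y$ over one spatial period equals $\theta$ uniformly in $t$ (equivalently $x(y+\theta,t)=x(y,t)+L$), and then transfer the $L$-periodicity of $u$, $u_x$, $m$ in $x$ to $\theta$-periodicity in $y$ via the inverse map, which is well defined since $\sqrt{m+1}>0$. You additionally supply an explicit proof of the $t$-independence of $\int_0^L\sqrt{m(\xi,t)+1}\,\dd\xi$ by differentiating under the integral and invoking \eqref{CH-m1} with periodicity of the boundary terms, a point the paper's proof takes as given through \eqref{y_0L}; this makes your version slightly more self-contained without changing the approach.
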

\begin{proof}
    Introducing $x(y,t)$ as the inverse to $y(x,t)$ and setting $y=0$ and $y=\theta$
    in the l.h.s. of \eqref{y} we have
    \begin{align*}
        0 & = \int_0^{x(0,t)} \sqrt{m(\xi,t)+1} d\xi 
            -\int_0^t u(0,\zeta)\sqrt{m(0,\zeta)+1}d\zeta, \\
            \theta & = \int_0^{x(\theta,t)} \sqrt{m(\xi,t)+1} d\xi 
            -\int_0^t u(0,\zeta)\sqrt{m(0,\zeta)+1}d\zeta,
    \end{align*}
    from which we deduce that 
    \[
    \int_{x(0,t)}^{x(\theta,t)} \sqrt{m(\xi,t)+1} d\xi =\theta = \int_0^L \sqrt{m(\xi,0)+1} d\xi
    \]
    for all $t$. Since $m(x,t)$ is periodic and $\sqrt{m(x,t)+1}>0$, it follows that 
    \[
    x(\theta,t) - x(0,t) = L \ \ \text{for all} \ \ t.
    \]
    Consequently,
    \[
    \hat u(\theta,t) =  u(x(\theta,t),t) = u(x(0,t),t) = \hat u(0,t)
    \]
    and thus $\hat u(y,t)$ is periodic as function of $y$, for all fixed $t$, with period $\theta$. Similarly for $\hat v = u_x$ and $\hat m$.
\end{proof}

\begin{proposition}\label{prop:periodic2}
    Let  $\hat u(y,t)$, $\hat v(y,t)$,  and $\hat m(y,t)$ (satisfying $\hat m(y,t)+1>0$) be the components of a periodic solution of  \eqref{CH_in_y} with the period $\theta$, 
        and let $x(y,t)$ be defined by 
\begin{equation}
    \label{x}
    x(y,t)=\int_0^y \frac{1}{\sqrt{\hat m(\xi,t)+1}}\dd\xi+
    \int_0^t \hat u(0,\zeta)\dd\zeta.
\end{equation}
Then 
    $m(x,t):=\hat m(y(x,t),t)$ and $u(x,t):=\hat u(y(x,t),t)$
    are the components of a periodic solution of 
\eqref{CH-m1}, with the period
\begin{equation}\label{L}
    L= \int_0^\theta \frac{1}{\sqrt{\hat m(\xi,0)+1}}\dd\xi.
\end{equation}
    Moreover, \eqref{theta} holds true.
\end{proposition}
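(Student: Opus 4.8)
The plan is to mirror the structure of the two preceding propositions: first reduce to the already-established reverse change of variables, then handle the periodicity and the identity \eqref{theta} as separate, essentially independent points.

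First I would verify that the function $x(y,t)$ defined in \eqref{x} satisfies the system \eqref{x-yt}. The relation $x_y=1/\sqrt{\hat m+1}$ is immediate upon differentiating the first integral in \eqref{x}. For $x_t$, differentiating \eqref{x} in $t$ under the integral sign gives $x_t=\int_0^y\partial_t(\hat m(\xi,t)+1)^{-1/2}\,d\xi+\hat u(0,t)$; using \eqref{CH_in_y-1} to replace $(\sqrt{\hat m+1})_t$ by $-\hat v\sqrt{\hat m+1}$ and then \eqref{CH_in_y-2} to identify $\hat v/\sqrt{\hat m+1}=\hat u_y$, the integrand collapses to $\hat u_y$, so $x_t=\hat u(y,t)-\hat u(0,t)+\hat u(0,t)=\hat u(y,t)$, which is \eqref{x_t}. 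Once \eqref{x-yt} is in hand, the preceding proposition (the reverse change of variables) yields at once that $u(x,t):=\hat u(y(x,t),t)$ and $m(x,t):=\hat m(y(x,t),t)$ solve \eqref{CH-m1}; here $y(\cdot,t)$ is the inverse of $x(\cdot,t)$, which is a bijection of $\mathbb{R}$ because $x_y=1/\sqrt{\hat m+1}$ is positive and, by periodicity of $\hat m$, bounded away from both $0$ and $\infty$.

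Next comes the periodicity, the crux of the statement. The key quantity is $\Lambda(t):=\int_0^\theta \frac{d\xi}{\sqrt{\hat m(\xi,t)+1}}$. Since $\hat m(\cdot,t)$ has period $\theta$, the integral of the same periodic integrand over any interval of length $\theta$ gives $x(y+\theta,t)-x(y,t)=\Lambda(t)$ for every $y$. I would then show $\Lambda$ is constant in $t$: differentiating and using \eqref{CH_in_y-1} and \eqref{CH_in_y-2} exactly as in the computation of $x_t$ gives $\Lambda'(t)=\int_0^\theta \hat u_y(\xi,t)\,d\xi=\hat u(\theta,t)-\hat u(0,t)=0$ by periodicity of $\hat u$. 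Hence $\Lambda(t)\equiv\Lambda(0)=L$ with $L$ as in \eqref{L}, so $x(y+\theta,t)=x(y,t)+L$, equivalently $y(x+L,t)=y(x,t)+\theta$, for all $x,t$. The $\theta$-periodicity in $y$ of $\hat u$ and $\hat m$ then transfers to $L$-periodicity in $x$ of $u$ and $m$. Finally, for \eqref{theta} I would substitute $\xi=x(\eta,0)$ in $\int_0^L\sqrt{m(\xi,0)+1}\,d\xi$: since $x(0,0)=0$ and $x(\theta,0)=\int_0^\theta(\hat m+1)^{-1/2}d\xi=L$, the limits become $0$ and $\theta$, and with $d\xi=(\hat m(\eta,0)+1)^{-1/2}d\eta$ and $m(x(\eta,0),0)=\hat m(\eta,0)$ the two square-root factors cancel, leaving $\int_0^\theta d\eta=\theta$.

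The main obstacle — the one point that genuinely invokes the dynamics rather than bookkeeping — is the $t$-independence of the period, i.e. $\Lambda'(t)=0$. This is precisely the conservation law underlying the system \eqref{CH_in_y}, and it is what guarantees that the reverse map $(y,t)\mapsto(x,t)$ carries $\theta$-periodicity in $y$ into a \emph{fixed} $L$-periodicity in $x$ rather than a $t$-dependent one. Everything else reduces to the same differentiation-under-the-integral manipulations used to obtain \eqref{x_t}, together with the verification that $y\mapsto x$ is a genuine bijection.
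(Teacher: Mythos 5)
Your proof is correct, and its skeleton --- verify that \eqref{x} yields \eqref{x-yt}, invoke the reverse change of variables to get \eqref{CH-m1}, show the period-length integral is $t$-independent, then prove \eqref{theta} by substitution --- matches the paper's. The one place you genuinely diverge is the crux you identify yourself. The paper evaluates \eqref{x} at the preimages of $x=0$ and $x=L$ and subtracts, obtaining $\int_{y(0,t)}^{y(L,t)}(\hat m(\xi,t)+1)^{-1/2}\,\dd\xi=L$ for all $t$, and then concludes $y(L,t)-y(0,t)=\theta$ from periodicity and positivity of $\hat m$ (``arguing as in Proposition \ref{prop:periodic1}''); but that monotonicity inference tacitly requires exactly the conservation law $\Lambda(t)\equiv\Lambda(0)=L$, which the paper never spells out --- in Proposition \ref{prop:periodic1} the corresponding conserved quantity $\int_0^L\sqrt{m+1}\,\dd x$ is used just as silently. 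You instead prove $\Lambda'(t)=0$ directly from the dynamics, via $\partial_t(\hat m+1)^{-1/2}=\hat v/\sqrt{\hat m+1}=\hat u_y$ by \eqref{CH_in_y-1}--\eqref{CH_in_y-2}, so your writeup supplies the step the paper elides; the same computation doubles as your explicit verification of \eqref{x_t}, which is needed before the reverse-change-of-variables proposition can legitimately be invoked and which the paper also leaves to the reader. A further small gain of your route: you obtain $x(y+\theta,t)=x(y,t)+L$ for \emph{every} $y$, hence $L$-periodicity of $u$ and $m$ at all $x$ simultaneously, whereas the paper checks equality only at the endpoints $0$ and $L$ and appeals to ``similarly''. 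Your closing substitution proving \eqref{theta} coincides with the paper's final display, read in the opposite direction.
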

\begin{proof}
    Introducing $y(x,t)$ as the inverse to $x(y,t)$ and arguing as in the proof of Proposition
    \ref{prop:periodic1} we arrive at
    \[
    \int_{y(0,t)}^{y(L,t)} \frac{1}{\sqrt{\hat m(\xi,t)+1}} d\xi =L
    = \int_0^\theta \frac{1}{\sqrt{\hat m(\xi,0)+1}}  d\xi
    \]
    for all $t$, which implies  
    \[
    y(L,t) - y(0,t) = \theta \ \ \text{for all} \ \ t.
    \]
     Consequently,
    \[
    u(L,t) =  \hat u(y(L,t),t) = \hat u(y(0,t),t) = u(0,t)
    \]
    and thus $u(x,t)$ is periodic as function of $x$, for all fixed $t$, with period $L$
    determined by \eqref{L}.  Similarly for $m$.

    Finally,
    \[
    \int_0^L \sqrt{m(\xi,0)+1} d\xi = \int_0^L \sqrt{\hat m(y(\xi,0),0)+1} d\xi=
    \int_0^\theta \sqrt{\hat m(y,0)+1} \xi_y dy = \int_0^\theta dy =\theta.
    \]
\end{proof}

Introducing $\hat p(y,t,k)$
in terms of $y(x,t)$ by 
 \begin{equation}\label{py}
    \hat p(y,t,k):=y+\frac{t}{2\lambda}= y-\frac{t}{2(k^2+\frac{1}{4})}
\end{equation}  
we observe that it is suitable in the both cases (the line and the interval).

Now, let us reformulate the original Lax pair equations in the $(y,t)$ variables.

\begin{proposition}
The Lax pair \eqref{lax} in the variables $(y,t)$ takes the form 
\begin{subequations}\label{Lax_y}
\begin{align}\label{Lax_y_y}
    &\hat \Psi_{ y}+\ii k \sigma_3 \hat \Psi=\left(\frac{1}{4}\frac{\hat m_y}{\hat m+1}\begin{pmatrix}
        0&1\\1&0
    \end{pmatrix}-\frac{1}{8\ii k}\frac{\hat m}{\hat m+1}\begin{pmatrix}
        -1&-1\\1&1
    \end{pmatrix}\right)\hat \Psi,\\
   &\hat\Psi_{ t}=\left(-\frac{\ii k}{4\lambda}\begin{pmatrix}
       \sqrt{\hat m+1}+\frac{1}{\sqrt{\hat m+1}} &  -\sqrt{\hat m+1}+\frac{1}{\sqrt{\hat m+1}}\\\label{Lax_y_t}
        \sqrt{\hat m+1}-\frac{1}{\sqrt{\hat m+1}}& -\sqrt{\hat m+1}-\frac{1}{\sqrt{\hat m+1}}
   \end{pmatrix} + \frac{1}{4\ii k}\frac{\hat u}{\sqrt{\hat m+1}}   \begin{pmatrix}
        -1&-1\\1&1
    \end{pmatrix} \right) \hat\Psi.
\end{align}
\end{subequations}
\end{proposition}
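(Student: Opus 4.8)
The plan is to change variables directly in the Lax pair \eqref{lax} by setting $\hat\Psi(y,t,k):=\Phi(x(y,t),t,k)$ and applying the chain rule through the relations \eqref{x-yt}. Since $\hat\Psi_y=\Phi_x x_y$ and $\hat\Psi_t=\Phi_x x_t+\Phi_t$, with $x_y=1/\sqrt{\hat m+1}$ and $x_t=\hat u$, this gives $\Phi_x=\sqrt{\hat m+1}\,\hat\Psi_y$ and $\Phi_t=\hat\Psi_t-\hat u\sqrt{\hat m+1}\,\hat\Psi_y$. The first step is to insert $\Phi_x$ into the $x$-equation $\Phi_x+\ii k p_x\sigma_3\Phi=U\Phi$ (recall $p_x=\sqrt{m+1}$, and $\sqrt{m+1}=\sqrt{\hat m+1}$ after the substitution) and divide through by $\sqrt{\hat m+1}$. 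Using $m_x=\hat m_y\sqrt{\hat m+1}$ one checks the two elementary identities $\frac{1}{\sqrt{\hat m+1}}\cdot\frac{m_x}{m+1}=\frac{\hat m_y}{\hat m+1}$ and $\frac{1}{\sqrt{\hat m+1}}\cdot\frac{m}{\sqrt{m+1}}=\frac{\hat m}{\hat m+1}$, which turn $U/\sqrt{\hat m+1}$ exactly into the matrix coefficient on the right-hand side of \eqref{Lax_y_y}. Thus \eqref{Lax_y_y} follows with essentially no cancellation.

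The $t$-equation is where the real work lies. After substituting $\Phi_t$ into $\Phi_t+\ii k p_t\sigma_3\Phi=V\Phi$ (with $p_t=-u\sqrt{m+1}+\frac{1}{2\lambda}$), I would eliminate the stray $\hat\Psi_y$ term by inserting the already-derived $y$-equation in the form $\hat\Psi_y=(-\ii k\sigma_3+\hat U)\hat\Psi$, where $\hat U$ denotes the coefficient matrix of \eqref{Lax_y_y}. The key first cancellation is that the two terms $\pm\ii k\hat u\sqrt{\hat m+1}\,\sigma_3$ — one arising from $\hat u\sqrt{\hat m+1}\cdot(-\ii k\sigma_3)$ and one from $\ii k p_t\sigma_3$ — cancel, and the remaining transport contribution $-\frac{\ii k}{2\lambda}\sigma_3$ annihilates the $\frac{\ii k}{2\lambda}\sigma_3$ already sitting inside $V$, so that no pure $\sigma_3$ survives. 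What is then left to verify is the matrix identity
\[
\hat u\sqrt{\hat m+1}\,\hat U-\frac{\ii k}{2\lambda}\sigma_3+V=\text{(coefficient matrix in \eqref{Lax_y_t})},
\]
which I would check coefficient by coefficient on the fixed basis $\left\{\left(\begin{smallmatrix}0&1\\1&0\end{smallmatrix}\right),\left(\begin{smallmatrix}-1&-1\\1&1\end{smallmatrix}\right),\left(\begin{smallmatrix}-1&1\\-1&1\end{smallmatrix}\right),\sigma_3\right\}$.

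I expect the genuine obstacle to be showing that the symmetric off-diagonal term $\left(\begin{smallmatrix}0&1\\1&0\end{smallmatrix}\right)$ drops out, since this is the only place where the CH equation itself is needed. From \eqref{CH-m1} one derives $\frac{m_t}{m+1}=-2u_x-u\frac{m_x}{m+1}$, whence the coefficient $\frac{1}{4}\frac{m_t}{m+1}+\frac{u_x}{2}$ multiplying $\left(\begin{smallmatrix}0&1\\1&0\end{smallmatrix}\right)$ in $V$ collapses to $-\frac{\hat u}{4}\frac{\hat m_y}{\sqrt{\hat m+1}}$, which is precisely cancelled by the $\left(\begin{smallmatrix}0&1\\1&0\end{smallmatrix}\right)$-contribution coming from $\hat u\sqrt{\hat m+1}\,\hat U$. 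The rest is bookkeeping: the $\left(\begin{smallmatrix}-1&-1\\1&1\end{smallmatrix}\right)$-coefficients combine through $(\hat m+2)-\hat m=2$ into $\frac{1}{4\ii k}\frac{\hat u}{\sqrt{\hat m+1}}+\frac{\ii k}{4\lambda}\frac{1}{\sqrt{\hat m+1}}$, while the $\frac{\ii k}{4\lambda}$-block of $V$ already matches once one rewrites the first matrix of \eqref{Lax_y_t} as $\frac{\ii k}{4\lambda}\sqrt{\hat m+1}\left(\begin{smallmatrix}-1&1\\-1&1\end{smallmatrix}\right)+\frac{\ii k}{4\lambda}\frac{1}{\sqrt{\hat m+1}}\left(\begin{smallmatrix}-1&-1\\1&1\end{smallmatrix}\right)$. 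Matching all four basis coefficients then completes the verification and establishes \eqref{Lax_y_t}.
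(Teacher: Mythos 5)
Your proposal is correct and follows essentially the same route as the paper, whose proof is just the one-line observation that setting $\hat\Psi(y,t,k)=\Phi(x(y,t),t,k)$ and applying the chain rule via \eqref{x_y}, \eqref{x_t} transforms \eqref{lax} into \eqref{Lax_y}; you have simply carried out that substitution explicitly. Your detailed bookkeeping checks out, including the one genuinely nontrivial point — that the $\left(\begin{smallmatrix}0&1\\1&0\end{smallmatrix}\right)$ coefficient $\frac14\frac{m_t}{m+1}+\frac{u_x}{2}+\frac{\hat u}{4}\frac{\hat m_y}{\sqrt{\hat m+1}}$ vanishes precisely because $u$ solves \eqref{CH-m1}, a step the paper leaves implicit.
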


\begin{proof}
  Introducing $\hat\Psi(y,t) = \hat\Phi (x(y,t),t)$ and taking into account \eqref{x_y} and \eqref{x_t}, the Lax pair \eqref{lax} in the variables $(y,t)$ takes the form \eqref{Lax_y}.
\end{proof}

\subsection{The  RH formalism in $y,t$ variables}
In this subsection we discuss how to arrive at a (local) solution of the CH equation in the 
$y,t$ variables starting from a RH problem, parametrized by $y$ and $t$, suggested 
by the considerations in Section \ref{sec:3}.

Consider the following \textbf{RH problem parametrized by $y$ and $t$}:
find a piece-wise meromorphic ($k\in {\mathbb C}\setminus\Gamma$), $2\times 2$-matrix valued function $\hat M(y,t,k)$ satisfying the following conditions:
\begin{enumerate}[\textbullet]
\item
\emph{Jump} condition
\begin{equation}\label{jump-y}
\hat M_+(y,t,k)=\hat M_-(y,t,\lambda) \hat J(y,t,k),\qquad k\in\Gamma,
\end{equation}
where 
$\Gamma$ is a contour containing the real axis $\mathbb R$ and 
$\hat J(y,t,k)=\eul^{-ik\hat p(y,t,k)\sigma_3}J_0(k)\eul^{ik\hat p(y,t,k)\sigma_3}$ with some 
$\hat J_0(k)$ with $\det J_0(k)\equiv 1$.

\item  \emph{Normalization} condition:
\begin{equation}\label{norm-m-hat}
\hat M(y,t,k)=I+\ord\left(\frac{1}{k}\right), \quad k\to\infty.
\end{equation}

\item
\emph{Singularity} condition at $0$:
\begin{equation}\label{hatM_at_0nongen}
  \hat{ M}(y,t,k)=  \frac{\ii}{k} \hat c(y,t)\begin{pmatrix}
        \delta&1\\-\delta&-1
    \end{pmatrix}+O(1), \quad k\to 0, \quad k\in\mathbb{C}_+,
\end{equation}
\begin{equation}\label{hatM_at_0nongen_-}
   \hat M(y,t,k)=  -\frac{\ii}{k} \hat c(y,t)\begin{pmatrix}
        -1&-\delta\\1&\delta
    \end{pmatrix}+O(1), \quad k\to 0, \quad k\in\mathbb{C}_-,
\end{equation} 
where $\hat c(y,t)\in\mathbb{R}$ is not specified.

\item \emph{Structural} condition at $k=\pm\frac{\ii}{2}$:

\begin{equation}\label{hatM_at_i2}
    \hat M(y,t,k)=\frac{1}{2}\begin{pmatrix}
    ( \hat q+\hat q^{-1})\hat f&(\hat q-\hat q^{-1})\hat f^{-1}\\
   (\hat q-\hat q^{-1})f&(\hat q+\hat q^{-1})\hat f^{-1}
\end{pmatrix}+
O\left(k-\tfrac{\ii}{2}\right),\quad k\to\frac{\ii}{2},
\end{equation}
\begin{equation}\label{hatM_at_-i2}
    \hat M(y,t,k)=\frac{1}{2}\begin{pmatrix}
    ( \hat q+\hat q^{-1})\hat f^{-1}&(\hat q-\hat q^{-1})\hat f\\
   (\hat q-\hat q^{-1})f^{-1}&(\hat q+\hat q^{-1})\hat f
\end{pmatrix}+
O\left(k+\tfrac{\ii}{2}\right),\quad k\to-\frac{\ii}{2},
\end{equation}
where $\hat q=\hat q(y,t)>0$ and $\hat f=\hat f(y,t)>0$ are not specified.

\item \emph{Residue} conditions:
\begin{align}\label{res_hatM}
\Res_{\mu_j}\hat M^{(2)}(y,t,k)&= c_j \hat M^{(1)}(y,t,\mu_j) \eul^{-2\ii \mu_j \hat p(y,t,\mu_j)}, ~ \mu_j\in D_1, \\\label{res_hatM_}
\Res_{\eta_j}\hat M^{(1)}(y,t,k)&=d_j \hat M^{(2)}(y,t,\eta_j)\eul^{2\ii \eta_j \hat p(y,t,\eta_j)} , ~ \eta_j\in D_2
\end{align}
with some $\{\mu_j\}$, $\{\eta_j\}$, $\{c_j\}$ and $\{d_j\}$, where ${\mathbb C}\setminus\Gamma = D_1\cup D_2$.
\end{enumerate}

\begin{proposition}
 Assume that $\hat M $ is a solution of RH problem \eqref{jump-y}--\eqref{res_hatM_}. Then $\det \hat M=1$.
\end{proposition}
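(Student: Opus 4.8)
The plan is to prove that the scalar function $\det\hat M$ is analytic on all of $\mathbb C$ except possibly for a simple pole at the origin, tends to $1$ at infinity, and equals $1$ at $k=i/2$; a Liouville-type argument then forces $\det\hat M\equiv 1$. The first step is to show $\det\hat M$ has no jump across $\Gamma$. From the jump relation \eqref{jump-y} one has $\det\hat M_+=\det\hat M_-\cdot\det\hat J$, and since $\hat J=\eul^{-\ii k\hat p\sigma_3}J_0\eul^{\ii k\hat p\sigma_3}$ is a conjugate of $J_0$ by determinant-$1$ matrices, $\det\hat J=\det J_0\equiv 1$. Hence $\det\hat M_+=\det\hat M_-$ on $\Gamma$, so the sectionally analytic scalar $\det\hat M$ extends analytically across the whole contour.

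Next I would rule out poles at the residue points. At $\mu_j\in D_1$ the condition \eqref{res_hatM} says that the only singular column, $\hat M^{(2)}$, has a simple pole whose residue is proportional to $\hat M^{(1)}(\mu_j)$, while $\hat M^{(1)}$ remains regular. Writing $\det\hat M=\det\bigl(\hat M^{(1)},\hat M^{(2)}\bigr)$, the putative singular contribution near $\mu_j$ is the determinant of two parallel columns, which vanishes; thus $\det\hat M$ is regular at $\mu_j$. The identical computation at $\eta_j\in D_2$ via \eqref{res_hatM_} removes those poles as well.

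I would then treat the exceptional points $k=\pm\tfrac{\ii}{2}$ and $k=0$. At $k=\pm\tfrac{\ii}{2}$ the structural conditions \eqref{hatM_at_i2}--\eqref{hatM_at_-i2} give $\hat M$ a finite limit, and the determinant of the leading matrix is $\tfrac14\bigl[(\hat q+\hat q^{-1})^2-(\hat q-\hat q^{-1})^2\bigr]=1$; hence $\det\hat M$ is analytic there with $\det\hat M(\tfrac{\ii}{2})=1$. The delicate point is $k=0$: by \eqref{hatM_at_0nongen} the most singular part of $\hat M$ is $\tfrac{\ii}{k}\hat c\left(\begin{smallmatrix}\delta&1\\-\delta&-1\end{smallmatrix}\right)$, a rank-one matrix of determinant $\delta(-1)-1\cdot(-\delta)=0$. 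Because the leading coefficient is rank one, the a priori $1/k^2$ term of $\det\hat M$ cancels, leaving at most a simple pole at the origin.

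Collecting these facts, $\det\hat M$ is analytic on $\mathbb C\setminus\{0\}$, carries at most a simple pole at $0$, and satisfies $\det\hat M\to1$ as $k\to\infty$ by the normalization \eqref{norm-m-hat}. Therefore $\det\hat M(k)=1+c/k$ for some constant $c$, and evaluating at $k=\tfrac{\ii}{2}$, where we have already found $\det\hat M=1$, forces $c=0$, so $\det\hat M\equiv 1$. I expect the origin to be the only real obstacle: one must verify both that the rank-one structure lowers the order of the pole to one and that the remaining simple pole is annihilated. The cleanest way to kill $c$ is the comparison with the value at $\tfrac{\ii}{2}$ just described; alternatively, if one records the symmetry $\det\hat M(-k)=\det\hat M(k)$ inherited from the two-sided conditions \eqref{hatM_at_0nongen}--\eqref{hatM_at_0nongen_-}, the evenness of $\det\hat M$ immediately annihilates the odd coefficient $c$.
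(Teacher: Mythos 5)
Your proof is correct and takes essentially the same route as the paper's: you show $\det\hat M$ has no jump, no poles at the residue points (parallel-columns cancellation), and at most a simple pole at $k=0$ because the rank-one leading term $\tfrac{\ii}{k}\hat c\left(\begin{smallmatrix}\delta&1\\-\delta&-1\end{smallmatrix}\right)$ has zero determinant, then conclude $\det\hat M=1+c/k$ by Liouville and kill $c$ by evaluating at $k=\tfrac{\ii}{2}$, exactly as the paper does. One caveat on your closing aside: the evenness $\det\hat M(-k)=\det\hat M(k)$ does \emph{not} follow from the singularity conditions \eqref{hatM_at_0nongen}--\eqref{hatM_at_0nongen_-} alone — the symmetry of $\hat M$ under $k\mapsto -k$ is established only later (Proposition \ref{prop:sym}) under additional hypotheses on the jump matrix and residue data — but since your primary argument via $k=\tfrac{\ii}{2}$ is self-contained, this does not affect the validity of the proof.
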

\begin{proof}
The conditions for $\hat M$ imply that $\det\hat M$ has neither a jump across $\Gamma$ no singularities at $\mu_j$ and $\nu_j$. Moreover, $\det\hat M$ tends to $1$ as $k\to\infty$, and the only possible singularity of $\det\hat M$  is at $0$.

    As $k\to0$, we have: 
\begin{equation*}
  \hat M(y,t,k)=  \frac{\ii}{k} \hat c(y,t)\begin{pmatrix}
        \delta&1\\\delta&-1
    \end{pmatrix}+\begin{pmatrix}
        \hat m_1^0(y,t)&\hat m_2^0(y,t)\\ \hat m_3^0 (y,t)&\hat m_4^0 (y,t)
    \end{pmatrix}+O(k), \quad k\to 0, \quad k\in\mathbb{C}_+.
\end{equation*}
This implies that 
\begin{equation*}
\det \hat M(k)=\frac{\ii \hat A(y,t)}{k}+O(1), \quad k\to 0.
\end{equation*}
with $\hat A(y,t)=\hat c(y,t)(\delta(\hat m_{4}^0(y,t)+\hat m_{2}^0(y,t))-\hat m_{3}^0(y,t)-\hat m_{1}^0(y,0))$. 
Then, by Liouville's theorem, $\det\hat M(k) \equiv 1 +\frac{\ii \hat A(y,t)}{k}$ with some $\hat A$. In particular, $\det\hat M (\frac{\ii}{2}) =1 +2 \hat A(y,t)$.

On the other hand, the structural condition at $\frac{\ii}{2}$ \eqref{hatM_at_i2} implies that $\det\hat M(\frac{\ii}{2}) = 1$. Thus $\hat A(y,t)\equiv 0$, which implies that $\det \hat M(k)\equiv 1$.

In particular, we have that either
\begin{subequations}\label{hatc_0}
    \begin{equation}\label{hatc}
    \hat c(y,t)=0
    \end{equation}
or
    \begin{equation}\label{hatm_i}
    \delta(m_{4}^0(y,t)+m_{2}^0(y,t))-m_{3}^0(y,t)-m_{1}^0(y,0)=0
    \end{equation}
\end{subequations}
Notice that in case $\hat c(y,t)=0$, $\hat M(y,t,k)$ is non-singular at $k=0$.

\end{proof}

\begin{proposition}\label{prop:uniqness}
     If a solution of the RH problem \eqref{jump-y}--\eqref{res_hatM_} exists, it is unique.
\end{proposition}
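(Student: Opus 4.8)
The plan is to show that any two solutions coincide by a Liouville‑type argument applied to their quotient. Suppose $\hat M$ and $\tilde M$ both satisfy \eqref{jump-y}--\eqref{res_hatM_}; by the preceding proposition $\det\hat M=\det\tilde M\equiv1$, so both are invertible wherever they are finite, and I set
\[
X(y,t,k):=\hat M(y,t,k)\,\tilde M(y,t,k)^{-1},
\]
with $(y,t)$ regarded as fixed parameters. The goal is to prove $X\equiv I$, which gives $\hat M=\tilde M$.

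First I would remove every singularity of $X$ except possibly the one at $k=0$. Across $\Gamma$ both factors obey the \emph{same} jump, so $X_+=\hat M_+\tilde M_+^{-1}=\hat M_-\hat J\hat J^{-1}\tilde M_-^{-1}=X_-$, and $X$ continues analytically across $\Gamma\setminus\{0\}$. At a point $\mu_j$ the residue relation \eqref{res_hatM} is equivalent to a local factorization $\hat M=\hat H\bigl(\begin{smallmatrix}1 & \gamma_j/(k-\mu_j)\\ 0&1\end{smallmatrix}\bigr)$ with $\hat H$ holomorphic and invertible near $\mu_j$ and $\gamma_j=c_j\eul^{-2\ii\mu_j\hat p(\mu_j)}$; since $\tilde M$ has the same factorization with the \emph{same} $\gamma_j$, the triangular factors cancel in $X=\hat M\tilde M^{-1}$ and $X$ is holomorphic at $\mu_j$. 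The analogous lower–triangular factorization built from $d_j$ handles the points $\eta_j$ via \eqref{res_hatM_}. Finally, the structural conditions \eqref{hatM_at_i2}--\eqref{hatM_at_-i2} give finite limits of $\hat M$ and $\tilde M$ at $k=\pm\tfrac{\ii}{2}$ whose determinant is $1$, hence invertible limits, so $X$ is analytic there as well.

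Next I would treat $k=0$. The leading coefficients in \eqref{hatM_at_0nongen}--\eqref{hatM_at_0nongen_-} are rank one, and this degeneracy cancels the $1/k^2$ contribution in the product (exactly as in the determinant computation), so $X$ has at most a simple pole at $0$. Together with the normalization \eqref{norm-m-hat}, Liouville's theorem then gives $X(y,t,k)=I+B(y,t)/k$ for a matrix $B$ independent of $k$. The identity $\det X\equiv1$ forces $\operatorname{tr}B=0$ and $\det B=0$, so $B$ is nilpotent. Moreover, writing $\hat M=X\tilde M$ and using that $\hat M$ has at most a simple pole at $0$ by \eqref{hatM_at_0nongen}, the $1/k^2$ term on the right must vanish, which yields $B\binom{1}{-1}=0$; combined with nilpotency this forces
\[
B=p\begin{pmatrix}1&1\\-1&-1\end{pmatrix}=:pN
\]
for a single scalar $p=p(y,t)$.

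The hard part is to show $p=0$: neither the jump, nor the residues, nor the symmetry encoded in \eqref{hatM_at_i2}--\eqref{hatM_at_-i2} excludes a nonzero nilpotent residue, and the resolution rests on the \emph{positivity} $\hat q,\hat f>0$ built into the structural condition. I would evaluate $X=I+B/k$ at $k=\tfrac{\ii}{2}$, so that $\hat M(\tfrac{\ii}{2})=(I+tN)\,\tilde M(\tfrac{\ii}{2})$ with $t=-2\ii p$, and then impose that the right‑hand side is again of the form \eqref{hatM_at_i2}. Expressing $\tilde M(\tfrac{\ii}{2})$ through its parameters $\tilde q,\tilde f>0$, the defining relation of that form (equality of the products of the two entries of the first and of the second row), after using that $\tilde M(\tfrac{\ii}{2})$ itself has this structure and $\det\tilde M(\tfrac{\ii}{2})=1$, reduces to the single scalar equation $2t\,\tilde q^{2}=0$. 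Since $\tilde q>0$ this gives $t=0$, hence $B=0$ and $X\equiv I$, i.e. $\hat M=\tilde M$. I expect this last reduction to be the main technical obstacle, as it is the only point where the degenerate pole at $0$ and the special normalization at $\pm\tfrac{\ii}{2}$ must be combined.
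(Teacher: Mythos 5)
Your proposal is correct and follows essentially the same route as the paper's proof: both form the quotient $X=\hat M_1\hat M_2^{-1}$, remove the jump and the residue-point singularities, apply Liouville's theorem to obtain $X=I+B/k$ with a rank-one nilpotent residue $B$ proportional to $\left(\begin{smallmatrix}1&1\\-1&-1\end{smallmatrix}\right)$, and kill $B$ by imposing the structural condition \eqref{hatM_at_i2} at $k=\tfrac{\ii}{2}$ together with the positivity $\hat q,\hat f>0$. Your way of pinning down the form of $B$ (via $\det X\equiv 1$ and the pole order of $\hat M$ at $k=0$, rather than the paper's explicit expansion coefficients and \eqref{hatm_i}) and your use of the row-product invariant at $k=\tfrac{\ii}{2}$ (rather than the paper's full system of relations among $\hat q_i,\hat f_i$) are only cosmetic variants of the same argument.
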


\begin{proof}
    Suppose that $\hat M_1$ and $\hat M_2$ are two solutions of the RH problem and consider $P\coloneqq\hat M_1(\hat M_2)^{-1}$. Obviously, $P$ has neither a jump across $\Gamma$ nor singularities at $\mu_j$ and $\nu_j$. Moreover, $P$ tends to $I$ as $k\to\infty$, and the only possible singularity of $P$ is a simple pole at $k=0$.

    Consider the development of $\hat M_j$, $j=1,2$ as $k\to 0$ with $\Im k>0$:

             \begin{equation*}
  \hat M_j(y,t,k)=  \frac{\ii}{k} \hat c_j(y,t)\begin{pmatrix}
        \delta&1\\- \delta&-1
    \end{pmatrix}+\begin{pmatrix}
        m_{1}^{0j}&m_{2}^{0j}\\m_{3}^{0j}&m_{4}^{0j}
    \end{pmatrix}+O(k), \quad k\to 0, \quad k\in\mathbb{C}_+.
\end{equation*}
Since $\det \hat M_j=1$, using \eqref{hatm_i} we have

     \begin{equation*}
         P(y,t,k)=\frac{\ii \hat B(y,t)}{k}\begin{pmatrix}
        1&1\\-1&-1
    \end{pmatrix} +O(1), \quad k\to 0, \quad k\in\mathbb{C}_+ 
     \end{equation*}
with $\hat B=-\hat c_1\left(\delta\hat m_{4}^{02}-\hat m_{3}^{02}\right)-\hat c_2\left(\delta\hat m_{2}^{01}-\hat m_{1}^{01}\right)$.
Then by Liouville's theorem we have
  \begin{equation}\label{Pexp}
         P(y,t,k)=I+\frac{\ii \hat B(y,t)}{k}\begin{pmatrix}
        -1&-1\\1&1
    \end{pmatrix} . 
     \end{equation}
In particular, we have
  \begin{equation}\label{Pi2}
         P(y,t,\frac{\ii}{2})=\begin{pmatrix}
        1-2\hat B(y,t)&-2\hat B(y,t)\\2\hat B(y,t)&1+2\hat B(y,t)
    \end{pmatrix} . 
     \end{equation}
    
     It follows that  the structural condition (at $k=\frac{\ii}{2}$) 
     considered for $P\hat M_2=\hat M_1$ reads as
\begin{equation}
    \begin{pmatrix}
        \hat f_2\left(\hat q_2+\hat q_2^{-1}-4\hat B\hat q_2\right)&\hat f_2^{-1}\left(\hat q_2-\hat q_2^{-1}-4\hat B\hat q_2\right)\\
         \hat f_2\left(\hat q_2-\hat q_2^{-1}+4\hat B\hat q_2\right)&\hat f_2^{-1}\left(\hat q_2+\hat q_2^{-1}+4\hat B\hat q_2\right)
    \end{pmatrix}=\begin{pmatrix}
        \hat f_1\left(\hat q_1+\hat q_1^{-1}\right)&\hat f_1^{-1}\left(\hat q_1-\hat q_1^{-1}\right)\\
         \hat f_1\left(\hat q_1-\hat q_1^{-1}\right)&\hat f_1^{-1}\left(\hat q_1+\hat q_1^{-1}\right)
    \end{pmatrix},
\end{equation}
 which implies the following relations among $\hat q_i$ and $\hat f_i$:
\[
f_2\hat q_2=\hat f_1\hat q_1,\quad
    \hat f_2^{-1}\hat q_2=\hat f_1^{-1}\hat q_1,\quad
    \hat f_2\hat q_2^{-1}-4\hat B \hat q_2=\hat f_1\hat q_1^{-1}.
\]
  Since $\hat q_i>0$ and $\hat f_i>0$, we conclude that 
  $\hat q_1=\hat q_2$, $\hat f_1=\hat f_2$, and $\hat B=0$.

\end{proof}

Now  assume that the RH problem \eqref{jump-y}--\eqref{res_hatM_} 
has a solution $\hat M(y,t,k)$ that satisfies the \emph{symmetries}
\begin{equation}
 \label{sym_1}
 \hat M(k)=\overline{\hat M(-\bar k)}
 \end{equation}
and 
 \begin{equation}
 \label{sym_2}
\hat M(k)=\begin{pmatrix}
    0&1\\1&0
\end{pmatrix}\hat M(-k)\begin{pmatrix}
    0&1\\1&0
\end{pmatrix}
 \end{equation} 
and  is differentiable w.r.t. $y$ and $t$.

\begin{proposition}
    Evaluating $\hat M(y,t,k)$ at particular points of $\overline{\mathbb C}$ one can 
    obtain a solution of the CH equation in the $y,t$ variables.
\end{proposition}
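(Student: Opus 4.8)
The plan is to reconstruct the Lax pair \eqref{Lax_y} directly from $\hat M$, so that the distinguished evaluations at $k=\infty$, $k=0$ and $k=\pm\frac{\ii}{2}$ furnish coefficients $\hat m,\hat u,\hat v$ whose compatibility is the system \eqref{CH_in_y}. First I set $\hat\Psi(y,t,k):=\hat M(y,t,k)\eul^{-\ii k\hat p(y,t,k)\sigma_3}$. Since $\hat p_y=1$ and $J_0$ is independent of $(y,t)$, the jump relation \eqref{jump-y} becomes $\hat\Psi_+=\hat\Psi_-J_0(k)$ with a $(y,t)$-independent jump, so the logarithmic derivatives
\[
\mathcal U:=\hat\Psi_y\hat\Psi^{-1}=\hat M_y\hat M^{-1}-\ii k\,\hat M\sigma_3\hat M^{-1},\quad
\mathcal V:=\hat\Psi_t\hat\Psi^{-1}=\hat M_t\hat M^{-1}-\ii k\hat p_t\,\hat M\sigma_3\hat M^{-1}
\]
have no jump across $\Gamma$ and hence extend to meromorphic functions on $\mathbb C$; here differentiability of $\hat M$ is the assumed hypothesis and $\det\hat M\equiv1$ guarantees invertibility of $\hat M$ off the poles.

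Next I would determine the pole structure of $\mathcal U$. At $k=\infty$ the normalization \eqref{norm-m-hat} gives $\mathcal U=-\ii k\sigma_3+\mathcal U_0+\ord(1/k)$ with $\mathcal U_0=-\ii[M_1,\sigma_3]$ off-diagonal, where $M_1$ is the $1/k$ coefficient of $\hat M$. At $k=\pm\frac{\ii}{2}$ the structural conditions \eqref{hatM_at_i2}--\eqref{hatM_at_-i2} keep $\hat M$ analytic and invertible, so $\mathcal U$ is regular there; at the residue points $\mu_j,\eta_j$ (and their conjugates) the conditions \eqref{res_hatM}--\eqref{res_hatM_} are precisely the rank-one relations that make $\hat\Psi_y\hat\Psi^{-1}$ regular, by the standard dressing cancellation; and at $k=0$ the singularity conditions \eqref{hatM_at_0nongen}--\eqref{hatM_at_0nongen_-}, together with $\det\hat M\equiv1$ and the nilpotent form of the residue matrix, leave at most a simple pole. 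Liouville's theorem then forces $\mathcal U=-\ii k\sigma_3+\mathcal U_0+\mathcal U_{-1}/k$, and comparison with \eqref{Lax_y_y} identifies
\[
\mathcal U_0=\tfrac14\tfrac{\hat m_y}{\hat m+1}\begin{pmatrix}0&1\\1&0\end{pmatrix},\qquad
\mathcal U_{-1}=-\tfrac1{8\ii}\tfrac{\hat m}{\hat m+1}\begin{pmatrix}-1&-1\\1&1\end{pmatrix},
\]
which is the $y$-equation of \eqref{Lax_y}.

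For $\mathcal V$ the behaviour at $\infty$, $0$ and the $\mu_j,\eta_j$ is handled in the same way, but now the factor $\ii k\hat p_t=-\ii k/(2(k^2+\tfrac14))$ produces genuine simple poles at $k=\pm\frac{\ii}{2}$. Computing their residues from the structural form \eqref{hatM_at_i2}--\eqref{hatM_at_-i2} one finds that $\Res_{k=\ii/2}\mathcal V$ equals $\tfrac{\ii}{4}\hat M\sigma_3\hat M^{-1}$ evaluated at $k=\tfrac{\ii}{2}$, in which $\hat f$ cancels and only $\hat q$ survives; setting $\hat q=(\hat m+1)^{1/4}$, i.e.\ $\hat m=\hat q^4-1$, this reproduces exactly the $-\tfrac{\ii k}{4\lambda}(\cdots)$ block of \eqref{Lax_y_t}, while the simple pole at $k=0$ yields the remaining $\tfrac1{4\ii k}\tfrac{\hat u}{\sqrt{\hat m+1}}(\cdots)$ block and thereby defines $\hat u$; then $\hat v$ is read off from \eqref{CH_in_y-2}. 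Consequently $\hat\Psi$ solves both equations of the Lax pair \eqref{Lax_y} with coefficients built from $\hat m,\hat u,\hat v$, and the cross-differentiation identity $\mathcal U_t-\mathcal V_y+[\mathcal U,\mathcal V]=0$ is precisely the CH system \eqref{CH_in_y}.

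The main obstacle is not any single residue computation but the \emph{consistency} of the reconstruction, since $\hat m$ is overdetermined: it appears in $\mathcal U_0,\mathcal U_{-1}$ through the expansions at $\infty$ and $0$, through $\hat q$ in the $k=\pm\frac{\ii}{2}$ residues of $\mathcal V$, and directly as $\hat q^4-1$ in \eqref{hatM_at_i2}, and likewise a single $\hat u$ must emerge from the $k=0$ data of $\mathcal V$. Showing that these all agree is the crux; it is forced by $\det\hat M\equiv1$ (already established) together with the two symmetries \eqref{sym_1}--\eqref{sym_2}, which lock together the diagonal versus off-diagonal entries and the $k\mapsto-k$ data in exactly the way the Lax pair requires. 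A secondary delicate point is the collapse of the $k=0$ singularity of both $\mathcal U$ and $\mathcal V$ to a simple pole, which relies on the specific nilpotent structure $\left(\begin{smallmatrix}\delta&1\\-\delta&-1\end{smallmatrix}\right)$ and the vanishing of the residue of $\det\hat M$ proved in the preceding proposition.
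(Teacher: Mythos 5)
Your overall architecture is the paper's: pass to $\hat\Psi=\hat M\eul^{-\ii k\hat p\sigma_3}$, use the $(y,t)$-independence of $J_0$ and the residue relations to show $\hat\Psi_y\hat\Psi^{-1}$ and $\hat\Psi_t\hat\Psi^{-1}$ are rational, pin down their pole structure at $k=\infty,0,\pm\frac{\ii}{2}$ by Liouville (your residue $\tfrac{\ii}{4}\hat M\sigma_3\hat M^{-1}|_{k=\ii/2}$, with $\hat f$ cancelling, is exactly the paper's \eqref{Psi_Psi_t_i}), and close with the zero-curvature identity. This matches Propositions \ref{Prop_Lax_y} and \ref{Prop_Lax_t} step for step.

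However, there is a genuine gap at precisely the point you call the crux, and your proposed repair is wrong. You ``identify'' $\mathcal U_0$ and $\mathcal U_{-1}$ with $\tfrac14\tfrac{\hat m_y}{\hat m+1}\sigma_1$ and the nilpotent $\hat m$-block of \eqref{Lax_y_y} by comparison, and then assert that the consistency of the overdetermined reconstruction (the same $\hat m$ appearing in the coefficients at $\infty$, at $0$, and via $\hat q$ at $\pm\tfrac{\ii}{2}$) ``is forced by $\det\hat M\equiv1$ together with the two symmetries \eqref{sym_1}--\eqref{sym_2}.'' Those constraints fix only the \emph{algebraic shape} of the Laurent coefficients — reality, the off-diagonal form of $\mathcal U_0$, the nilpotent form of $\mathcal U_{-1}$ — i.e., they show $\mathcal U=-\ii k\sigma_3+\hat\alpha\sigma_1+\tfrac{\ii}{k}\hat\beta\left(\begin{smallmatrix}1&1\\-1&-1\end{smallmatrix}\right)$ with \emph{unrelated} scalar functions $\hat\alpha,\hat\beta$. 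They cannot yield the functional relations $\hat\alpha=\hat q_y/\hat q$ and $\hat\beta=-\tfrac18\tfrac{\hat q^4-1}{\hat q^4}$ tying the $k=\infty$ and $k=0$ data to the value $\hat q$ at $k=\tfrac{\ii}{2}$: these relate $y$-derivatives of $\hat M$ at one point of $\overline{\mathbb C}$ to expansion coefficients at another, which no pointwise determinant or symmetry identity can do. In the paper these relations are \emph{outputs} of the compatibility computation: cross-differentiation with generic coefficients $\hat\alpha,\hat\beta,\hat\gamma,\hat q$ produces the system \eqref{rel}, whose equations \eqref{sys_rec4_}--\eqref{sys_rec6_} supply exactly the missing identifications, after which the \emph{definitions} $\hat u:=4\hat q^2\hat\gamma$, $\hat m:=\hat q^4-1$, $\hat v:=\hat u_y\hat q^2$ of \eqref{hmbbgg} reduce \eqref{rel} to \eqref{CH_in_y} (Proposition \ref{prop:12}). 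So your statement that the compatibility identity ``is precisely'' \eqref{CH_in_y} also has the logic inverted: compatibility first yields the larger system \eqref{rel}, and the CH system emerges only after that reduction. The fix is simply to not pre-identify the coefficients with the CH Lax entries and instead let the zero-curvature equations do the identification, as the paper does.
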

We proceed as follows:
\begin{enumerate}[(a)]
\item 
Starting from $\hat M(y,t,k)$, define $2\times 2$-matrix valued functions \[ \hat \Psi (y,t,k):= \hat M(y,t,k)\eul^{-\ii k p(y,t,k)\sigma_3}\] and show that $\hat\Psi(y,t,k)$ satisfies the system of differential equations:
\begin{equation}\label{Lax-hat-hat}
\begin{split}
\hat\Psi_y&=\doublehat{U}\hat\Psi, \\
\hat \Psi_t&=\doublehat{V}\hat\Psi,
\end{split}
\end{equation}
where $\doublehat{U}$ and $\doublehat{V}$ have the same (rational) dependence on $k$ as in \eqref{Lax_y_y} and \eqref{Lax_y_t}, with coefficients given in terms of $\hat M(y,t,k)$ evaluated at appropriate values of $k$.
\item
Show that the compatibility condition for \eqref{Lax-hat-hat}, i.e., the equality $\doublehat{U}_t - \doublehat{V}_y + [\doublehat{U},\doublehat{V}]=0$, reduces to 
\eqref{CH_in_y}.
\end{enumerate}

\begin{proposition}\label{Prop_Lax_y}
    Let $\hat M(y,t,k)$ be the solution of the RH problem \eqref{jump-y}--\eqref{res_hatM_} that satisfies symmetries \eqref{sym_1} and \eqref{sym_2}. Define $\hat \Psi (y,t,k):= \hat M(y,t,k)\eul^{-\ii k p(y,t,k)\sigma_3}$. 

    Then $\hat\Psi(y,t,k)$ satisfies the differential equation
\begin{equation}\label{Lax_y_Psi}
    \hat\Psi_y=\doublehat{U}\hat\Psi
\end{equation}
with 
\begin{equation}\label{hat_hat_U}
    \doublehat{U}(y,t,k)=-\ii k \sigma_3 + \hat\alpha(y,t)\begin{pmatrix}
    0&1\\1&0
\end{pmatrix}+\frac{\ii}{k} \hat \beta(y,t)\begin{pmatrix}
        1&1\\-1&-1
    \end{pmatrix},
\end{equation}
where 
\begin{enumerate}[(i)]
    \item 
$\hat \alpha(y,t)\in\mathbb{R}$ can be obtained from the large $k$ expansion of $\hat M(y,t,k)$: 
\begin{equation}\label{alpha}
\hat \alpha(y,t)=-\hat m_2^\infty(y,t),
\end{equation}
where
\begin{equation}\label{alpha-M}
\hat M(y,t,k)=I+\frac{\ii}{k}\begin{pmatrix}
    \hat m_1^\infty & \hat m_2^\infty\\
    -\hat m_2^\infty& -\hat m_1^\infty
\end{pmatrix}(y,t)+O\left(\frac{1}{k^2}\right),\qquad k\to\infty.
\end{equation}
\item
$\hat \beta(y,t)$ can be obtained from the expansion of $\hat M(y,t,k)$ as $k\to 0$: 
\begin{equation}\label{beta}
\hat \beta(y,t)=\hat c_y(y,t)\hat m_1^0(y,t)-\hat c(y,t)\hat m_{1y}^0(y,t)+\delta\left(\hat c(y,t)\hat m^0_{2y}(y,t)-\hat c_y(y,t)\hat m_2^0(y,t)\red{-}2\hat c^2(y,t)\right),
\end{equation}
where
\begin{equation}\label{beta-M}
  \hat M(y,t,k)=  \frac{\ii}{k} \hat c(y,t)\begin{pmatrix}
        \delta&1\\-\delta&-1
    \end{pmatrix}+\begin{pmatrix}
        \hat m_1^0(y,t)&\hat m_2^0(y,t)\\ \hat m_3^0 (y,t)&\hat m_4^0(y,t) 
    \end{pmatrix}+O(k), \quad k\to 0, \quad k\in\mathbb{C}_+.
\end{equation}
\end{enumerate}

\end{proposition}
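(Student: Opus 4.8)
The plan is to show that $\doublehat{U}:=\hat\Psi_y\hat\Psi^{-1}$ is a rational function of $k$ of exactly the form \eqref{hat_hat_U}, and then to read off the two coefficients by matching expansions. The starting observation is that since $\hat p_y\equiv 1$ (indeed $\hat p(y,t,k)=y+\tfrac{t}{2\lambda}$ by \eqref{py}, with $\lambda$ independent of $y$), the exponential factors cancel:
\[
\doublehat{U}=\hat\Psi_y\hat\Psi^{-1}=\hat M_y\hat M^{-1}-\ii k\,\hat M\sigma_3\hat M^{-1}.
\]
This identity is the crux of the whole argument: although $\hat\Psi$ has an essential singularity at $k=\pm\tfrac{\ii}{2}$ (where $\hat p$ blows up), $\doublehat{U}$ is expressed purely through $\hat M$ and is therefore meromorphic; moreover, since $\det\hat M\equiv1$ and $\hat M$ is, by the structural condition \eqref{hatM_at_i2}, analytic and invertible at $k=\pm\tfrac{\ii}{2}$ (its value there has determinant $1$), the function $\doublehat{U}$ is in fact \emph{analytic} at $\pm\tfrac{\ii}{2}$.

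Next I would verify that $\doublehat{U}$ is analytic across $\Gamma$ and at the residue points, so that its only remaining singularity is at $k=0$. Across $\Gamma$ the jump $\hat J=\eul^{-\ii k\hat p\sigma_3}J_0(k)\eul^{\ii k\hat p\sigma_3}$ transforms $\hat\Psi$ by the $y$-independent matrix $J_0$, i.e. $\hat\Psi_+=\hat\Psi_-J_0$; differentiating in $y$ gives $\doublehat{U}_+=\hat\Psi_{+,y}\hat\Psi_+^{-1}=\hat\Psi_{-,y}J_0J_0^{-1}\hat\Psi_-^{-1}=\doublehat{U}_-$, so there is no jump. At a pole $\mu_j\in D_1$ the residue condition \eqref{res_hatM}, after absorbing the exponential, reads $\Res_{\mu_j}\hat\Psi^{(2)}=c_j\hat\Psi^{(1)}(\mu_j)$ with $c_j$ independent of $y,t$; hence $\tilde\Psi:=\hat\Psi\left(\begin{smallmatrix}1 & -c_j/(k-\mu_j)\\ 0 & 1\end{smallmatrix}\right)$ is analytic and invertible near $\mu_j$, and because the correcting factor is $y$-independent one has $\doublehat{U}=\hat\Psi_y\hat\Psi^{-1}=\tilde\Psi_y\tilde\Psi^{-1}$, which is analytic at $\mu_j$. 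The same gauge trick (columns exchanged) handles $\eta_j\in D_2$ in \eqref{res_hatM_} and, via the symmetry, the conjugate points in $\mathbb C_-$.

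Then I would fix the global form by Liouville. From the normalization \eqref{norm-m-hat}, \eqref{alpha-M} the term $-\ii k\,\hat M\sigma_3\hat M^{-1}$ behaves like $-\ii k\sigma_3+O(1)$ while $\hat M_y\hat M^{-1}=O(1/k)$, and the singularity condition \eqref{hatM_at_0nongen} makes $\doublehat{U}$ at worst a simple pole at $0$. Hence $\doublehat{U}+\ii k\sigma_3$ is entire, bounded at $\infty$, with at most a simple pole at $0$, so $\doublehat{U}=-\ii k\sigma_3+A(y,t)+\tfrac1k B(y,t)$. The symmetries \eqref{sym_1}, \eqref{sym_2} pass to $\hat\Psi$ (using $\overline{\hat p(-\bar k)}=\hat p(k)$ and $\left(\begin{smallmatrix}0&1\\1&0\end{smallmatrix}\right)\sigma_3\left(\begin{smallmatrix}0&1\\1&0\end{smallmatrix}\right)=-\sigma_3$) and thus to $\doublehat{U}$, giving $\doublehat{U}(k)=\left(\begin{smallmatrix}0&1\\1&0\end{smallmatrix}\right)\doublehat{U}(-k)\left(\begin{smallmatrix}0&1\\1&0\end{smallmatrix}\right)$ and $\doublehat{U}(k)=\overline{\doublehat{U}(-\bar k)}$. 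Matching the $O(1)$ term at $\infty$ — which equals $-\ii[\ii m^\infty,\sigma_3]$ with $m^\infty=\left(\begin{smallmatrix}\hat m_1^\infty & \hat m_2^\infty\\ -\hat m_2^\infty & -\hat m_1^\infty\end{smallmatrix}\right)$ — is automatically off-diagonal (since $[M,\sigma_3]$ is off-diagonal for every $M$) and real, forcing $A=\hat\alpha\left(\begin{smallmatrix}0&1\\1&0\end{smallmatrix}\right)$ and yielding \eqref{alpha}.

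Finally, the residue $B$ at $k=0$ is obtained by substituting the Laurent data \eqref{beta-M} into $\doublehat{U}=\hat M_y\hat M^{-1}-\ii k\,\hat M\sigma_3\hat M^{-1}$, forming $\hat M^{-1}$ from $\det\hat M\equiv1$, and collecting the coefficient of $1/k$; the rank-one pole matrix $\left(\begin{smallmatrix}\delta&1\\-\delta&-1\end{smallmatrix}\right)$ produces the factor $\left(\begin{smallmatrix}1&1\\-1&-1\end{smallmatrix}\right)$ (consistent with the symmetry-allowed form of $B$), and careful bookkeeping gives \eqref{beta}. I expect this last step to be the main obstacle: near $k=0$ both $\hat M$ and $\hat M^{-1}$ are singular, so each of the two pieces of $\doublehat{U}$ carries a priori $O(1/k^2)$ and $O(1/k)$ contributions that must be expanded to the right order; one has to track the $y$-derivatives of the Laurent coefficients and invoke $\det\hat M\equiv1$ to cancel the spurious double pole, after which the surviving simple pole assembles into $\tfrac{\ii}{k}\hat\beta\left(\begin{smallmatrix}1&1\\-1&-1\end{smallmatrix}\right)$ with $\hat\beta$ as in \eqref{beta}.
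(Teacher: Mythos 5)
Your proposal is correct and follows essentially the same route as the paper: since $\hat p_y\equiv 1$, the logarithmic derivative $\hat\Psi_y\hat\Psi^{-1}=\hat M_y\hat M^{-1}-\ii k\,\hat M\sigma_3\hat M^{-1}$ has no jump across $\Gamma$ and no poles at the residue points (your $y$-independent gauge factors are just an explicit version of the paper's observation that $\hat\Psi_y$ satisfies the same jump and residue conditions as $\hat\Psi$), after which Liouville's theorem together with the expansions at $k=\infty$ and $k=0$ pins down \eqref{hat_hat_U}. The ``careful bookkeeping'' you defer at $k=0$ is exactly what the paper carries out to obtain \eqref{Psi_y_Psi_0}: the potential double pole cancels because the rank-one singular part annihilates its own adjugate (with $\det\hat M\equiv 1$ giving $\hat M^{-1}=\operatorname{adj}\hat M$), and the extra $-2\delta\hat c^2$ contribution in \eqref{beta} arises precisely from the $-\ii k\,\hat M\sigma_3\hat M^{-1}$ piece, as you anticipated.
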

\begin{proof}
    First, notice that $\hat\Psi(y,t,\lambda)$ satisfies the jump condition
\[
\hat\Psi^+(y,t,k)=\hat\Psi^-(y,t,k)J_0(k)
\]
with the jump matrix $J_0(k)$ independent of $y$. Hence, $\hat\Psi_y(y,t,k)$ satisfies the same jump condition.

As for the residue conditions, we notice that the symmetry assumptions  
\eqref{sym_1} and \eqref{sym_2} imply that if $\mu_j$ is a pole of $\hat M^{(2)}$ then $-\bar\mu_j$ is also a pole of $\hat M^{(2)}$ with $c_{\mu_j}=-\overline{c_{-\bar\mu_j}}$. Analogously, if $\eta_j$ is a pole of $\hat M^{(1)}$ then $-\bar\eta_j$ is also a pole of $\hat M^{(1)}$ with $d_{\eta_j}=-\overline{d_{-\bar\eta_j}}$. Moreover,
\begin{equation}\label{res-sym}
    \eta_j=-\mu_j, \quad d_j=-c_j, \ \ \text{and}\ 
    d_{\bar \mu_j}=\overline{c_{\mu_j}}.
\end{equation}

Since $\{c_j\}$ are independent of $y$, 
$\hat\Psi_y(y,t,k)$ satisfies the same residue conditions as $\hat\Psi(y,t,k)$ does:

\begin{align*}
\Res_{\mu_j}\hat \Psi^{(2)}&= c_j \hat \Psi^{(1)}(\mu_j) , ~ \mu_j\in D_1, \\
\Res_{\bar\mu_j}\hat \Psi^{(1)}&=\bar{ c}_j \hat \Psi^{(2)}(\bar\mu_j) , ~ \bar\mu_j\in D_2.
\end{align*}
with constants $c_j$ independent of $y$.
Consequently,  $\hat \Psi_y \hat \Psi^{-1}=\hat M_y \hat M^{-1}-\ii k \hat p_y\hat M \sigma_3 \hat M ^{-1}$ (with $\hat p_y=1$)  has neither jump no singularities at $\mu_j$ and thus it is a meromorphic function, with possible singularities at $k=\infty$ and $k=0$.

Let us analyze the behavior of $\hat \Psi_y \hat \Psi^{-1}$ as $k\to\infty$ and as $k\to 0$.

\begin{enumerate}[(i)]
    \item As $k\to\infty$, we have

    \begin{equation}
   \hat M = I+\frac{\hat M_\infty}{k}+O\left(\frac{1}{k^2}\right),
   \end{equation}
   where, due to the symmetries \eqref{sym_1} and \eqref{sym_2},
   \[
\hat M_\infty =i\begin{pmatrix}
    \hat m_1^\infty & \hat m_2^\infty\\
    -\hat m_2^\infty& -\hat m_1^\infty
\end{pmatrix}
\]
with $\hat m_j^\infty \in\mathbb{R}$.
Consequently, $
\hat M_y= \frac{\hat M_{\infty y}}{k}+O\left(\frac{1}{k^2}\right)$  and thus 
 $\hat M_y \hat M^{-1}=O\left(\frac{1}{k}\right)$, which leads to 
the following expansion for $\hat\Psi_y\hat\Psi^{-1}$:

\begin{equation}\label{Psi_y_Psi_inf}
  \hat \Psi_y \hat \Psi^{-1}=-\ii k \sigma_3 - \hat m_2^\infty\begin{pmatrix}
    0&1\\1&0
\end{pmatrix}+O(\frac{1}{k}),  \quad k\to \infty.
\end{equation}

\item As $k\to 0$, we have:

\begin{equation*}
  \hat M(y,t,k)=  \frac{\ii}{k} \hat c(y,t)\begin{pmatrix}
        \delta&1\\-\delta&-1
    \end{pmatrix}+\begin{pmatrix}
        \hat m_1^0(y,t)&\hat m_2^0(y,t)\\ \hat m_3^0 (y,t)&\hat m_4^0 (y,t)
    \end{pmatrix}+O(k), \quad k\to 0, \quad k\in\mathbb{C}_+, 
\end{equation*}
where, due to the symmetries \eqref{sym_1} and \eqref{sym_2} and determinant condition \eqref{hatm_i} (in case $\hat c \neq 0$), we have  $\delta(\hat m_{4}^0+\hat m_{2}^0)=\hat m_{3}^0+\hat m_{1}^0$.
Hence
\begin{equation*}
  \hat M_y \hat M^{-1}=  \frac{\ii}{k} \left(\hat c_y\hat m_1^0-\hat c\hat m^0_{1y}+\delta(\hat c\hat m^0_{2y}-\hat c_y\hat m_2^0)\right)\begin{pmatrix}
        1&1\\-1&-1
    \end{pmatrix}+O(1), \quad k\to 0, \quad k\in\mathbb{C}_+
\end{equation*}
and thus 
\begin{equation}\label{Psi_y_Psi_0}
\hat \Psi_y \hat \Psi^{-1}=\frac{\ii}{k} \left(\hat c_y\hat m_1^0-\hat c\hat m^0_{1y}+\delta(\hat c\hat m^0_{2y}-\hat c_y\hat m_2^0\red{-}2\hat c^2)\right)\begin{pmatrix}
        1&1\\-1&-1
    \end{pmatrix}+O(1), \quad k\to 0
\end{equation}
By Liouville's theorem, \eqref{Psi_y_Psi_inf} and \eqref{Psi_y_Psi_0}  imply \eqref{Lax_y_Psi}--\eqref{beta-M}.

\end{enumerate}

\end{proof}

\begin{proposition}\label{Prop_Lax_t}
$\hat\Psi(y,t,k)$ defined as in Proposition \ref{Prop_Lax_y} satisfies the differential equation
\begin{equation}\label{Lax_t_Psi}
    \hat\Psi_t=\doublehat{V}\hat\Psi
\end{equation}
with 
\begin{equation}\label{hat_hat_V}
    \doublehat{V}=\frac{\ii}{k} \hat \gamma(y,t)\begin{pmatrix}
        1&1\\-1&-1
    \end{pmatrix}-\frac{\ii k}{4\lambda}\begin{pmatrix}
      \hat q^2(y,t)+\hat q^{-2}(y,t)&-\hat q^2(y,t)+\hat q^{-2}(y,t)\\
    \hat q^2(y,t)-\hat q^{-2}(y,t)&-\hat q^2(y,t)-\hat q^{-2}(y,t) 
    \end{pmatrix},
\end{equation}
where 
\begin{enumerate}[(i)]
    \item 
$\hat q(y,t)\in\mathbb{R}$ can be obtained from the expansion of $\hat M(y,t,k)$ as $k\to \frac{\ii}{2}$: 
\begin{equation}\label{q-M}
    \hat M(y,t,k)=\frac{1}{2}\begin{pmatrix}
    ( \hat q+\hat q^{-1})\hat f&(\hat q-\hat q^{-1})\hat f^{-1}\\
   (\hat q-\hat q^{-1})f&(\hat q+\hat q^{-1})\hat f^{-1}
\end{pmatrix}+
O\left(k-\tfrac{\ii}{2}\right),\quad k\to \frac{\ii}{2}.
\end{equation}

\item
$\hat \gamma(y,t)$ can be obtained from the expansion of $\hat M(y,t,k)$ as $k\to 0$: 
\begin{equation}\label{gamma}
\hat \gamma(y,t)=\hat c_t(y,t)\hat m_1^0(y,t)-\hat c(y,t)\hat m_{1t}^0(y,t)+\delta\left(\hat c(y,t)\hat m^0_{2t}(y,t)-\hat c_t(y,t)\hat m_2^0(y,t)+2\hat c^2(y,t)\right),
\end{equation}
where
\begin{equation}\label{gamma-M}
  \hat M(k)=  \frac{\ii}{k} \hat c\begin{pmatrix}
        \delta&1\\-\delta&-1
    \end{pmatrix}+\begin{pmatrix}
        \hat m_1^0&\hat m_2^0\\ \hat m_3^0 &\hat m_4^0 
    \end{pmatrix}+O(k), \quad k\to 0, \quad k\in\mathbb{C}_+.
\end{equation}
\end{enumerate}

\end{proposition}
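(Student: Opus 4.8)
The plan is to mimic the proof of Proposition~\ref{Prop_Lax_y}, replacing the $y$-derivative by the $t$-derivative throughout; the essential new feature is that the phase is now \emph{singular} in $k$, since $\hat p_t=\tfrac{1}{2\lambda}=\tfrac{-2}{4k^2+1}$ has simple poles at $k=\pm\tfrac{\ii}{2}$. Set $W:=\hat\Psi_t\hat\Psi^{-1}$. Differentiating $\hat\Psi=\hat M\eul^{-\ii k\hat p\sigma_3}$ in $t$ gives
\[
W=\hat M_t\hat M^{-1}-\ii k\hat p_t\,\hat M\sigma_3\hat M^{-1}.
\]
Exactly as in Proposition~\ref{Prop_Lax_y}, because the jump matrix $J_0(k)$ and the residue constants $c_j,d_j$ attached to $\hat\Psi$ are independent of $t$, the function $W$ has no jump across $\Gamma$ and no poles at the discrete-spectrum points $\mu_j,\eta_j$; hence $W$ is meromorphic on all of $\mathbb C$. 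Its only possible singularities are at $k=0$ (from the pole of $\hat M$ there) and --- this is the novelty compared with $\doublehat U$ --- at $k=\pm\tfrac{\ii}{2}$, produced by the factor $-\ii k\hat p_t$. Moreover $W\to0$ as $k\to\infty$, since $\hat M_t\hat M^{-1}=O(1/k)$ and $-\ii k\hat p_t\,\hat M\sigma_3\hat M^{-1}=O(1/k)$. Thus $W$ is a rational function vanishing at infinity, determined by its principal parts at $\{0,\pm\tfrac{\ii}{2}\}$.

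First I would treat $k=\pm\tfrac{\ii}{2}$, which is the crux. There $\hat M_t\hat M^{-1}$ is regular: by the structural conditions \eqref{hatM_at_i2}, \eqref{hatM_at_-i2} the matrix $\hat M$ is analytic and, since $\det\hat M\equiv1$, invertible at $\pm\tfrac{\ii}{2}$. So the principal part of $W$ comes solely from the simple pole of $-\ii k\hat p_t$ times $\hat M\sigma_3\hat M^{-1}$ evaluated at $\pm\tfrac{\ii}{2}$. Writing the structural value as $\hat M(\pm\tfrac{\ii}{2})=Q\,F^{\pm1}$ with $Q=\tfrac12\left(\begin{smallmatrix}\hat q+\hat q^{-1}&\hat q-\hat q^{-1}\\ \hat q-\hat q^{-1}&\hat q+\hat q^{-1}\end{smallmatrix}\right)$ and $F=\mathrm{diag}(\hat f,\hat f^{-1})$ diagonal, the diagonal factor cancels and $\hat M\sigma_3\hat M^{-1}=Q\sigma_3Q^{-1}$ at \emph{both} points; a direct $2\times2$ computation (using $\det Q=1$) gives $Q\sigma_3Q^{-1}=\tfrac12\left(\begin{smallmatrix}\hat q^2+\hat q^{-2}&-(\hat q^2-\hat q^{-2})\\ \hat q^2-\hat q^{-2}&-(\hat q^2+\hat q^{-2})\end{smallmatrix}\right)$. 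Combined with $\Res_{\pm\ii/2}(-\ii k\hat p_t)=\tfrac{\ii}{4}$, the principal part of $W$ at each of $\pm\tfrac{\ii}{2}$ reassembles precisely into the principal part of the rational term $-\tfrac{\ii k}{4\lambda}\left(\begin{smallmatrix}\hat q^2+\hat q^{-2}&-\hat q^2+\hat q^{-2}\\ \hat q^2-\hat q^{-2}&-\hat q^2-\hat q^{-2}\end{smallmatrix}\right)$ of \eqref{hat_hat_V}; this is where $\hat q$, and hence \eqref{q-M}, enters. The symmetry \eqref{sym_2} makes the computations at $+\tfrac{\ii}{2}$ and $-\tfrac{\ii}{2}$ mutually consistent.

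Next I would treat $k=0$, which is entirely parallel to part~(ii) of Proposition~\ref{Prop_Lax_y}. Inserting the expansion \eqref{gamma-M} into $\hat M_t\hat M^{-1}$ produces the $\tfrac{\ii}{k}$-term $\hat c_t\hat m_1^0-\hat c\hat m_{1t}^0+\delta(\hat c\hat m_{2t}^0-\hat c_t\hat m_2^0)$ multiplying $\left(\begin{smallmatrix}1&1\\-1&-1\end{smallmatrix}\right)$, exactly as the $y$-derivatives appeared in \eqref{beta}; the additional $\hat c^2$-contribution comes from the double pole of $\hat M\sigma_3\hat M^{-1}$ at $k=0$ weighted by $-\ii k\hat p_t$, in the same way the $-2\hat c^2$ term arose in \eqref{beta} from the weight $-\ii k\hat p_y=-\ii k$, the value $\hat p_t|_{k=0}=\tfrac{1}{2\lambda}|_{k=0}$ fixing its coefficient. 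The determinant relation \eqref{hatm_i}, $\delta(\hat m_4^0+\hat m_2^0)=\hat m_3^0+\hat m_1^0$, is used just as before to bring this coefficient into the stated form of $\hat\gamma$. Since the rational term $-\tfrac{\ii k}{4\lambda}(\cdots)$ is regular at $k=0$, it does not interfere with this principal part.

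Finally, both $W$ and $\doublehat V$ of \eqref{hat_hat_V} are rational functions vanishing at $\infty$ with simple poles only at $\{0,\pm\tfrac{\ii}{2}\}$; having matched all three principal parts, their difference is entire and vanishes at infinity, so by Liouville's theorem $W=\doublehat V$, which is \eqref{Lax_t_Psi}. The main obstacle is the $k=\pm\tfrac{\ii}{2}$ step: unlike the $y$-equation, where $\hat p_y\equiv1$ is regular everywhere, here one must exploit the singularity of $\hat p_t$ together with the \emph{structural} data \eqref{q-M} of the RH problem (rather than the normalization or residue data), and verify that the resulting principal part recombines exactly into the $\hat q^{\pm2}$-matrix of $\doublehat V$.
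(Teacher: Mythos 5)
Your proof follows the paper's own argument essentially step for step: you form $W=\hat\Psi_t\hat\Psi^{-1}=\hat M_t\hat M^{-1}-\ii k\hat p_t\,\hat M\sigma_3\hat M^{-1}$, note it has no jump across $\Gamma$ and no poles at the residue points because $J_0$ and the residue constants are $t$-independent, match principal parts at $k=0$ and $k=\pm\tfrac{\ii}{2}$, and conclude by Liouville. Your treatment of $k=\pm\tfrac{\ii}{2}$ is correct and in fact more explicit than the paper's: the cancellation of the diagonal factor $F$, the identity $Q\sigma_3Q^{-1}=\tfrac12\left(\begin{smallmatrix}\hat q^2+\hat q^{-2}&-\hat q^2+\hat q^{-2}\\ \hat q^2-\hat q^{-2}&-\hat q^2-\hat q^{-2}\end{smallmatrix}\right)$, and $\Res_{\pm\ii/2}(-\ii k\hat p_t)=\tfrac{\ii}{4}$ reproduce exactly \eqref{Psi_Psi_t_i}--\eqref{Psi_Psi_t_-i} and the principal parts of the $-\tfrac{\ii k}{4\lambda}$-term of \eqref{hat_hat_V}.

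One caution concerns $k=0$, the single step you assert rather than carry out. You say the $\hat c^2$-contribution arises "in the same way" as in \eqref{beta}, with $\hat p_t|_{k=0}$ fixing its coefficient, and that this lands on the stated $\hat\gamma$. Executing it: the double-pole coefficient of $\hat M\sigma_3\hat M^{-1}$ is $2\delta\hat c^2\left(\begin{smallmatrix}1&1\\-1&-1\end{smallmatrix}\right)k^{-2}$ (using \eqref{hatm_i}), while by \eqref{py} one has $\hat p_t(0)=\tfrac{1}{2\lambda}\big|_{k=0}=-2$, so the weight $-\ii k\hat p_t\approx 2\ii k$ yields $+4\delta\hat c^2$, i.e.\ $(-2)$ times the $-2\delta\hat c^2$ of the $y$-equation. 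This agrees neither with the $+2\hat c^2$ in the statement \eqref{gamma} nor with the $-2\hat c^2$ in the paper's own display \eqref{Psi_t_Psi_0}, which appears to be copied verbatim from \eqref{Psi_y_Psi_0}; the paper is internally inconsistent at precisely this point, and your deferral to "the stated form of $\hat\gamma$" inherits the discrepancy. This does not damage the architecture of the argument --- the $\tfrac{\ii}{k}$ principal part is in any case a multiple of the nilpotent matrix, and Liouville then gives \eqref{Lax_t_Psi} with $\hat\gamma$ equal to whatever that scalar is --- but you should compute the coefficient explicitly instead of trusting the statement, since here that is the only place where the $t$-proof genuinely differs from the $y$-proof beyond the $\pm\tfrac{\ii}{2}$ analysis.
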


\begin{proof}

Similarly to Proposition \ref{Prop_Lax_y}, we notice that $\hat\Psi_t \hat\Psi^{-1}=\hat M_t\hat M^{-1}-\ii k\hat p_t\hat M\sigma_3\hat M^{-1}$ has neither jump nor poles at $\mu_j$, and thus it is a meromorphic function, with possible singularities at $k=\infty$, $k=0$, and $k=\pm \frac{\ii}{2}$, the latter being due to the singularity of $\hat p_t$ at $k=\pm \frac{\ii}{2}$:
\begin{equation}\label{p_t_i}
    p_t(k)=\frac{1}{k^2+\frac{1}{4}}=\mp\frac{\ii}{k\mp\frac{\ii}{2}}+O(1), \quad k \to\pm\frac{\ii}{2}.
\end{equation}
Evaluating $\hat\Psi_t\hat\Psi^{-1}$ near these points, we have the following.
\begin{enumerate}[(i)]

\item As $k\to\infty$, we have $\hat M\hat M_t^{-1}=O\left(\frac{1}{k}\right)$, $p_t(k)=O\left(\frac{1}{k^2}\right)$ and thus
\begin{equation}\label{psi-inf-t}
\hat\Psi_t\hat\Psi^{-1}(k)=\ord\left(k^{-1}\right),\qquad k\to\infty.
\end{equation}

\item As $k\to 0$, proceeding as in Proposition \ref{Prop_Lax_y} to get \eqref{Psi_y_Psi_0}, we have

\begin{equation}\label{Psi_t_Psi_0}
\hat \Psi_t \hat \Psi^{-1}=\frac{\ii}{k} \left(\hat c_t\hat m_1^0-\hat c\hat m^0_{1t}+\delta(\hat c\hat m^0_{2t}-\hat c_t\hat m_2^0\red{-}2\hat c^2)\right)\begin{pmatrix}
        1&1\\-1&-1
    \end{pmatrix}+O(1), \quad k\to 0.
\end{equation}

\item As $k\to\frac{\ii}{2}$, the structural condition at $\frac{\ii}{2}$ together with \eqref{p_t_i} yields

\begin{equation}\label{Psi_Psi_t_i}
    \hat \Psi_t \hat \Psi^{-1}=\frac{\ii}{8}\frac{1}{k-\frac{\ii}{2}}\begin{pmatrix}
    \hat q^2+\hat q^{-2}&-\hat q^2+\hat q^{-2}\\
    \hat q^2-\hat q^{-2}&-\hat q^2-\hat q^{-2}
\end{pmatrix}+O(1),\quad k \to \frac{\ii}{2}.
\end{equation}
Then, the symmetry \eqref{sym_2} implies that 
\begin{equation}\label{Psi_Psi_t_-i}
    \hat \Psi_t \hat \Psi^{-1}=\frac{\ii}{8}\frac{1}{k+\frac{\ii}{2}}\begin{pmatrix}
    \hat q^2+\hat q^{-2}&-\hat q^2+\hat q^{-2}\\
    \hat q^2-\hat q^{-2}&-\hat q^2-\hat q^{-2}
\end{pmatrix}+O(1),\quad k \to -\frac{\ii}{2}.
\end{equation}

\end{enumerate}

Combining \eqref{psi-inf-t},\eqref{Psi_t_Psi_0}, \eqref{Psi_Psi_t_-i}, and \eqref{Psi_Psi_t_-i}, and applying Liouville's theorem, we get \eqref{Lax_t_Psi}--\eqref{gamma-M}.

\end{proof}

Having obtained the pair of differential equations \eqref{hat_hat_U} and \eqref{hat_hat_V}, we can explore their compatibility, i.e., the equation 
\begin{equation}\label{compat}
\doublehat{U}_t - \doublehat{V}_y + [\doublehat{U},\doublehat{V}]=0
\end{equation}
in view of obtaining a representation of a solution of the CH equation in the $(y,t)$ variables \eqref{CH_in_y}.
For this purpose, substituting into \eqref{compat} the expressions for $\doublehat{U}$ and 
$\doublehat{V}$ from \eqref{hat_hat_U} and \eqref{hat_hat_V} respectively and
equating to $0$ the coefficients  at various expressions involving $k$ and $\lambda$, 
we get algebraic and differential equations  amongst $\hat \alpha$, $\hat \beta$, $\hat \gamma$, and $\hat q$:

\begin{proposition}

Let $\hat \alpha(y,t)$, $\hat \beta(y,t)$, $\hat \gamma(y,t)$ and $\hat q(y,t)$ be the functions determined in terms of $\hat M(y,t,k)$ as in Propositions \ref{Prop_Lax_y} and \ref{Prop_Lax_t}. Then they satisfy the following equations:
\begin{subequations}\label{rel}
\begin{align}\label{sys_rec1_}
&\hat \alpha_t+2\hat \gamma+\frac{1}{2}(\hat q^{2}-\hat q^{-2})=0,\\\label{sys_rec3_}
&\hat \beta_t-\hat \gamma_y-2\hat \gamma\hat \alpha=0,\\\label{sys_rec4_}
&-(\hat q^{-2}-\hat q^2)+8\hat \beta \hat q^2=0,\\\label{sys_rec5_}
&(\hat q^2+ \hat q^{-2})_y-2\hat \alpha(\hat q^2- \hat q^{-2})=0,\\\label{sys_rec6_}
&(\hat q^2- \hat q^{-2})_y-2\hat \alpha(\hat q^2+ \hat q^{-2})=0.
\end{align}
\end{subequations}
\end{proposition}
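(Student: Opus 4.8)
The plan is to substitute the explicit expressions \eqref{hat_hat_U} for $\doublehat{U}$ and \eqref{hat_hat_V} for $\doublehat{V}$ into the zero-curvature relation \eqref{compat}, evaluate the three constituents $\doublehat{U}_t$, $\doublehat{V}_y$ and $[\doublehat{U},\doublehat{V}]$ as explicit rational functions of $k$, and then read off \eqref{rel} by matching the matrix coefficients of a family of linearly independent scalar functions of $k$. To streamline the algebra I introduce the constant matrices $\sigma_1=\left(\begin{smallmatrix}0&1\\1&0\end{smallmatrix}\right)$, $P=\left(\begin{smallmatrix}1&1\\-1&-1\end{smallmatrix}\right)$ and $Q=\left(\begin{smallmatrix}s&-d\\d&-s\end{smallmatrix}\right)$, where $s:=\hat q^2+\hat q^{-2}$ and $d:=\hat q^2-\hat q^{-2}$, so that $\doublehat{U}=-\ii k\sigma_3+\hat\alpha\sigma_1+\tfrac{\ii}{k}\hat\beta P$ and $\doublehat{V}=\tfrac{\ii}{k}\hat\gamma P-\tfrac{\ii k}{4\lambda}Q$. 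The $y$- and $t$-derivatives are then immediate, since only $\hat\alpha,\hat\beta,\hat\gamma,s,d$ depend on $(y,t)$ while $k,\lambda$ are inert: $\doublehat{U}_t=\hat\alpha_t\sigma_1+\tfrac{\ii}{k}\hat\beta_t P$ and $\doublehat{V}_y=\tfrac{\ii}{k}\hat\gamma_y P-\tfrac{\ii k}{4\lambda}Q_y$ with $Q_y=\left(\begin{smallmatrix}s_y&-d_y\\d_y&-s_y\end{smallmatrix}\right)$.

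The main computation is the commutator. Expanding $[\doublehat{U},\doublehat{V}]$ bilinearly gives six brackets, each of which I reduce using the elementary relations $[\sigma_3,P]=2\sigma_1$, $[\sigma_1,P]=-2P$, $[P,P]=0$, $[\sigma_3,Q]=-2d\,\sigma_1$, $\sigma_1Q=-Q\sigma_1$ (hence $[\sigma_1,Q]=2\sigma_1Q$) and $[P,Q]=-4\hat q^2\sigma_1$ (using $s+d=2\hat q^2$). Collecting the results, the whole left-hand side of \eqref{compat} becomes
\begin{equation*}
E(k):=(\hat\alpha_t+2\hat\gamma)\sigma_1+\frac{\ii}{k}\bigl(\hat\beta_t-\hat\gamma_y-2\hat\alpha\hat\gamma\bigr)P+\frac{\ii k}{4\lambda}\bigl(Q_y-2\hat\alpha\,\sigma_1Q\bigr)+\frac{k^2d}{2\lambda}\,\sigma_1-\frac{\hat q^2\hat\beta}{\lambda}\,\sigma_1 .
\end{equation*}

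The decisive step is to eliminate the mixed monomial $k^2/\lambda$ by means of the dispersion relation $k^2=-\lambda-\tfrac14$, which gives $\tfrac{k^2}{\lambda}=-1-\tfrac{1}{4\lambda}$ and hence $\tfrac{k^2d}{2\lambda}\sigma_1=-\tfrac{d}{2}\sigma_1-\tfrac{d}{8\lambda}\sigma_1$. After this substitution $E(k)$ is a linear combination, with matrix coefficients, of the four scalar functions $1$, $1/k$, $1/\lambda$ and $k/\lambda$. These are linearly independent: $1$ is the value at $k=\infty$, $1/k$ is the unique term singular at $k=0$, while $1/\lambda$ (even in $k$) and $k/\lambda$ (odd in $k$) carry the simple poles at $k=\pm\tfrac{\ii}{2}$ and are separated by their parity. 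Requiring $E\equiv0$ therefore forces each matrix coefficient to vanish separately. The $1/k$-coefficient yields \eqref{sys_rec3_}; the coefficient $\bigl(-\tfrac{d}{8}-\hat q^2\hat\beta\bigr)\sigma_1$ of $1/\lambda$ yields \eqref{sys_rec4_}; the constant coefficient $\bigl(\hat\alpha_t+2\hat\gamma-\tfrac12(\hat q^2-\hat q^{-2})\bigr)\sigma_1$ yields \eqref{sys_rec1_}; and the vanishing of the $k/\lambda$-coefficient, i.e. of the matrix $Q_y-2\hat\alpha\,\sigma_1Q=\left(\begin{smallmatrix}s_y-2\hat\alpha d&-(d_y-2\hat\alpha s)\\ d_y-2\hat\alpha s&-(s_y-2\hat\alpha d)\end{smallmatrix}\right)$, gives the two entrywise identities $s_y-2\hat\alpha d=0$ and $d_y-2\hat\alpha s=0$, which are \eqref{sys_rec5_} and \eqref{sys_rec6_}.

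I expect the only genuine obstacle to be organizational rather than conceptual: since $\lambda=-(k^2+\tfrac14)$ is itself a function of $k$, one cannot separate the five equations by naively comparing powers of $k$, and the reorganization through the dispersion relation is essential. It is precisely this step that makes the single off-diagonal contribution $[\sigma_3,Q]=-2d\,\sigma_1$ feed \emph{both} the constant equation \eqref{sys_rec1_} and the $1/\lambda$ equation \eqref{sys_rec4_}; consequently the sign of the $\hat q^2-\hat q^{-2}$ term is pinned down by the split $\tfrac{k^2}{\lambda}=-1-\tfrac{1}{4\lambda}$ applied to $\tfrac{k^2d}{2\lambda}\sigma_1$, and this bookkeeping must be done with care since \eqref{sys_rec1_} and \eqref{sys_rec4_} share the same source term. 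I would also record that the lower row of $E(k)$ gives nothing new: the symmetry \eqref{sym_2}, inherited by $\doublehat{U}$ and $\doublehat{V}$ (so that $E(-k)=\sigma_1E(k)\sigma_1$), makes the $(2,1)$ and $(2,2)$ entries equivalent to the $(1,2)$ and $(1,1)$ entries already used. Finally, one should verify that no pole of order higher than one occurs at $k=\pm\tfrac{\ii}{2}$; this is automatic because $\doublehat{U}$ is regular there and $\doublehat{V}$ has only a simple pole, while the relation $Q^2=4I$ (equivalently $s^2-d^2=4$) controls the residues.
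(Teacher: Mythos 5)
Your proposal takes exactly the route the paper itself prescribes (and leaves as an exercise): substitute \eqref{hat_hat_U} and \eqref{hat_hat_V} into the zero-curvature equation \eqref{compat}, and equate coefficients after reorganizing in $k$ and $\lambda$. I have checked your commutator table ($[\sigma_3,P]=2\sigma_1$, $[\sigma_1,P]=-2P$, $[\sigma_3,Q]=-2d\,\sigma_1$, $\sigma_1Q=-Q\sigma_1$, $[P,Q]=-4\hat q^2\sigma_1$), the collected expression $E(k)$, and the key observation that one must use $k^2=-\lambda-\tfrac14$ to split $\tfrac{k^2d}{2\lambda}\sigma_1$ before comparing the linearly independent functions $1$, $1/k$, $1/\lambda$, $k/\lambda$; all of this is correct, and the $1/k$, $1/\lambda$ and $k/\lambda$ coefficients reproduce \eqref{sys_rec3_}, \eqref{sys_rec4_}, \eqref{sys_rec5_} and \eqref{sys_rec6_} exactly as stated.

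One point, however, you should flag explicitly instead of passing over: your constant-in-$k$ coefficient is $\hat\alpha_t+2\hat\gamma-\tfrac12(\hat q^2-\hat q^{-2})$, whereas \eqref{sys_rec1_} as printed carries $+\tfrac12(\hat q^2-\hat q^{-2})$, so your computation does \emph{not}, literally, ``yield \eqref{sys_rec1_}''. Your sign is in fact the correct one: with $\hat\alpha_t=-\tfrac12(\hat u_y\hat q^2)_y$ the minus-sign equation is equivalent to $\hat\gamma=\tfrac{1}{4\hat q^2}\bigl(\hat q^4-1+(\hat u_y\hat q^2)_y\hat q^2\bigr)$, which is precisely \eqref{gamma_eq_} in the paper's own Proposition \ref{prop:12} and reduces to the CH equation \eqref{CH_in_y-3}, while the plus-sign version would flip the sign of $\hat q^4-1$ there and would not reduce to \eqref{CH_in_y}. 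So the printed \eqref{sys_rec1_} contains a sign typo, your derivation is the consistent version, and a careful write-up should say so rather than silently identify the two. A last cosmetic remark: your appeal to $Q^2=4I$ to ``control the residues'' at $k=\pm\tfrac{\ii}{2}$ is superfluous — no double pole can arise simply because $\doublehat{U}$ is regular there and $\doublehat{V}$ enters the commutator only linearly, so $E(k)$ has at most simple poles at $\pm\tfrac{\ii}{2}$ by inspection.
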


\begin{proposition}\label{prop:12}
\label{reduce}
Introducing  $\hat m(y,t)$, $\hat u(y,t)$, and 
$\hat v(y,t)$
in terms of $\hat \alpha$, $\hat \beta$,  $\hat \gamma$, and $\hat q$ by
\begin{equation}\label{hmbbgg}
\hat u=4\hat q^2 \hat \gamma,\quad \hat m = \hat q^4-1,\quad \hat v = \hat u_y \hat q^2,
\end{equation}
 equations \eqref{rel} reduce to \eqref{CH_in_y}.
\end{proposition}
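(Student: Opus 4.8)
The plan is to treat \eqref{sys_rec4_}, \eqref{sys_rec5_}, \eqref{sys_rec6_} as purely algebraic relations expressing the auxiliary quantities $\hat\alpha$ and $\hat\beta$ in terms of $\hat q$ alone, and then to feed these, together with the substitution \eqref{hmbbgg}, into the two genuinely dynamical relations \eqref{sys_rec1_} and \eqref{sys_rec3_} to recover the system \eqref{CH_in_y}. Throughout I would write $w:=\hat q^2=\sqrt{\hat m+1}>0$, so that $\hat m=w^2-1$ and $\sqrt{\hat m+1}=w$; this makes the constitutive relation \eqref{CH_in_y-2} identical to the definition $\hat v=\hat u_y\hat q^2$ in \eqref{hmbbgg}, so \eqref{CH_in_y-2} requires no work.

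First I would solve the algebraic block. Equation \eqref{sys_rec4_} gives $\hat\beta=\tfrac18(w^{-2}-1)=-\tfrac18\tfrac{\hat m}{\hat m+1}$. For \eqref{sys_rec5_} and \eqref{sys_rec6_} I would factor $(w\pm w^{-1})$ out of each side; both equations then collapse to the single relation $\hat\alpha=\tfrac{w_y}{2w}=\tfrac14\tfrac{\hat m_y}{\hat m+1}$. Here \eqref{sys_rec6_} yields this unconditionally (its factor $w^2+1$ never vanishes), after which \eqref{sys_rec5_} is automatically satisfied; the only point needing a remark is the locus $w=1$ (i.e.\ $\hat m=0$), where the division in \eqref{sys_rec5_} is illegitimate but the identity extends by continuity. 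These identifications of $\hat\alpha$ and $\hat\beta$ are precisely the values for which $\doublehat{U}$ in \eqref{hat_hat_U} coincides with the $y$-operator \eqref{Lax_y_y}, while $\hat u=4\hat q^2\hat\gamma$ makes $\doublehat{V}$ in \eqref{hat_hat_V} coincide with \eqref{Lax_y_t}. This gives a conceptual cross-check: \eqref{Lax-hat-hat} is then literally the $(y,t)$ Lax pair, whose compatibility is \eqref{CH_in_y}.

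Next I would extract the evolution equation \eqref{CH_in_y-1} from \eqref{sys_rec3_}. Inserting $\hat\beta=\tfrac18(w^{-2}-1)$, $\hat\gamma=\hat u/(4w)$ and $\hat\alpha=w_y/(2w)$, the two contributions proportional to $\hat u\,w_y/w^2$ cancel, and what remains is $-\tfrac{w_t}{4w^3}-\tfrac{\hat u_y}{4w}=0$, i.e.\ $w_t=-\hat u_y w^2$; rewriting $w=\sqrt{\hat m+1}$ and $\hat u_y w^2=\hat v\,w$ gives exactly \eqref{CH_in_y-1}.

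Finally I would obtain \eqref{CH_in_y-3} from \eqref{sys_rec1_}. Using $\hat\alpha=w_y/(2w)$ and $\hat\gamma=\hat u/(4w)$ and clearing denominators turns \eqref{sys_rec1_} into the mixed relation $w_{yt}-\tfrac{w_yw_t}{w}+\hat u+w^2-1=0$. This still contains $t$-derivatives, so the decisive move is to eliminate them with the evolution law just derived: substituting $w_t=-\hat u_y w^2$ and $w_{yt}=\partial_y(-\hat u_y w^2)$ collapses all $t$-derivative terms into purely spatial expressions, and the outcome is the algebraic identity $\hat m=\hat u-\hat v_y\sqrt{\hat m+1}$, which is \eqref{CH_in_y-3}. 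I expect this last step to be the main obstacle, not because it is deep but because it is where the bookkeeping is most delicate: one must track the signs of the $\tfrac1k$, $\tfrac{\ii}{k}$ and $\tfrac{\ii k}{\lambda}$ contributions consistently and combine $w_{yt}$ with $w_yw_t/w$ correctly after inserting \eqref{CH_in_y-1}, since a single sign slip (in \eqref{sys_rec1_} or in the identification of $\hat\gamma$) flips \eqref{CH_in_y-3} into its negative. The Lax-pair identification of the second paragraph is the safest way to pin the signs down.
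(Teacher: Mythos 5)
Your overall route is the same as the paper's: solve the algebraic block for $\hat\alpha=\hat q_y/\hat q$ and $\hat\beta=\tfrac18(\hat q^{-4}-1)$, extract the evolution law $w_t=-\hat u_y w^2$ (equivalently $\hat u_y=(1/\hat q^2)_t$, which is what the paper derives) from \eqref{sys_rec3_}, and then eliminate the $t$-derivatives from \eqref{sys_rec1_} to get \eqref{CH_in_y-3}; the paper merely packages the last step as $(\hat u_y\hat q^2)_y=-2\hat\alpha_t$ and the rewriting \eqref{gamma_eq_}, while you substitute $w_t$ into $\hat\alpha_t$ directly — the same computation. Your treatment of \eqref{sys_rec4_}--\eqref{sys_rec6_} and of \eqref{CH_in_y-1}, \eqref{CH_in_y-2} is correct.

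However, your final step is inconsistent with your own intermediate relation, and you commit exactly the sign slip you warn about. From the \emph{printed} \eqref{sys_rec1_} (with $+\tfrac12(\hat q^2-\hat q^{-2})$) your mixed relation $w_{yt}-\tfrac{w_yw_t}{w}+\hat u+w^2-1=0$ is correct; but substituting $w_t=-\hat u_yw^2$ gives
\begin{equation*}
w_{yt}-\frac{w_yw_t}{w}=-\hat u_{yy}w^2-\hat u_y w w_y=-\hat v_y w,
\end{equation*}
so the outcome is $\hat m=\hat v_y\sqrt{\hat m+1}-\hat u$, the \emph{negative} of \eqref{CH_in_y-3}, not \eqref{CH_in_y-3} as you assert. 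The resolution is that \eqref{sys_rec1_} as printed carries a sign typo: computing the $\sigma_3$-off-diagonal ($\lambda$-independent) part of \eqref{compat} directly from \eqref{hat_hat_U}, \eqref{hat_hat_V}, using $\tfrac{k^2}{2\lambda}=-\tfrac12-\tfrac{1}{8\lambda}$, yields $\hat\alpha_t+2\hat\gamma-\tfrac12(\hat q^2-\hat q^{-2})=0$, and with this sign your mixed relation becomes $w_{yt}-\tfrac{w_yw_t}{w}+\hat u-w^2+1=0$, which does produce \eqref{CH_in_y-3}. The paper's proof silently makes this correction when it passes from \eqref{sys_rec1_} to \eqref{gamma_eq_} (the term $\hat q^4-1$ appears there with the sign opposite to what the printed \eqref{sys_rec1_} would give). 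A correct write-up must flag and fix this. Note also that the ``Lax-pair identification'' you offer as the safest sign anchor does not actually hold as stated: with $\hat\beta=-\tfrac18\tfrac{\hat m}{\hat m+1}$ and $\hat\gamma=\tfrac{\hat u}{4\hat q^2}$, the nilpotent $\tfrac{\ii}{k}$-parts of $\doublehat{U}$ and $\doublehat{V}$ come out with signs \emph{opposite} to the corresponding $-\tfrac{1}{8\ii k}$- and $\tfrac{1}{4\ii k}$-terms of \eqref{Lax_y_y}--\eqref{Lax_y_t} (recall $1/\ii=-\ii$), so relying on that cross-check as you state it would mislead rather than pin the signs down; the only reliable anchor here is the direct expansion of \eqref{compat}.
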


\begin{proof}
    First, notice that adding \eqref{sys_rec5_} and \eqref{sys_rec6_} gives
\begin{equation}\label{alpha_}
 \hat \alpha=\frac{\hat q_y}{\hat q}   
\end{equation}
 whereas \eqref{sys_rec4_} yields 
 \begin{equation}\label{beta_} 
  \hat\beta=-\frac{1}{8}\frac{\hat q^4-1}{\hat q^4}  . 
 \end{equation}

Then, introduce $\hat u:=4\hat q^2 \hat \gamma$. Then $\hat u_y=8 \hat q \hat q_y\hat \gamma+4\hat q^2\hat \gamma_y$, which, in view of \eqref{alpha_} and \eqref{sys_rec3_}, reduces to $\hat u_y=4\hat q^2\hat \beta_t$. 
Differentiating  \eqref{beta_} gives 
\[
\beta_t = \frac{1}{4}\left(\frac{1}{q^2}\right)_t\frac{1}{q^2}
\]
and thus
\begin{equation}\label{hatu_y}
    \hat u_y=\left(\frac{1}{\hat q^2}\right)_t.
\end{equation}

Now observe that using  \eqref{alpha_} and \eqref{hatu_y} one can express $\left( \hat u_y \hat q^2 \right)_y $
as follows:
\[
\left( \hat u_y \hat q^2 \right)_y  = \left( \left(\frac{1}{\hat q^2}\right)_t \hat q^2 \right)_y = \frac{2q_tq_y-2q q_{ty}}{q^2} = -2\hat \alpha_t,
\]
In view of this, equation \eqref{sys_rec1_} can be written as
\begin{equation} \label{gamma_eq_}
\hat \gamma = \frac{1}{4\hat q^2} \left( \hat q^4 - 1 + \left( \hat u_y \hat q^2 \right)_y \hat q^2 \right).
\end{equation}

Equations \eqref{hatu_y} and \eqref{gamma_eq_} give us the following system of equations:
\begin{equation}\label{ch_pre}
   \begin{aligned}
   & \hat u_y=\left(\frac{1}{\hat q^2}\right)_t,\\
   & \hat u-\hat q^4+1-(\hat u_y \hat q^2)_y\hat q^2=0.
\end{aligned} 
\end{equation}
Introducing $\hat m:=\hat q^4-1$,  system \eqref{ch_pre} becomes

\begin{equation}\label{sys_CH_y_rec_}
 \begin{aligned}
&(\sqrt{\hat m(y,t)+1})_t=-\hat u_y(y,t)(\hat m(y,t)+1),\\
&\hat m(y,t)=\hat u(y,t)-\left[\hat u_y(y,t)\sqrt{\hat m(y,t)+1}\right]_y\sqrt{\hat m(y,t)+1}.
\end{aligned} 
\end{equation}
Introducing $\hat v = \hat u_y \sqrt{\hat m + 1}$,  system 
\eqref{sys_CH_y_rec_} reduces to \eqref{CH_in_y}.

\end{proof}

Now let us discuss sufficient conditions on the contour $\Gamma$,  jump matrix $\hat J_0(k)$ and residue conditions that provide the symmetries \eqref{sym_1} and \eqref{sym_2}.

\begin{proposition}\label{prop:sym}
    Let the contour $\Gamma$ be invariant w.r.t. $k\mapsto-k$ and $k\mapsto-\bar k$, the jump matrix $\hat J_0$ satisfy the symmetries
    \begin{equation}
 \label{sym_J_1}
 \hat J_0(k)=\overline{\hat J_0(-\bar k)}
 \end{equation}
and 
 \begin{equation}
 \label{sym_J_2}
J_0(k)=\begin{pmatrix}
    0&1\\1&0
\end{pmatrix}J_0(-k)\begin{pmatrix}
    0&1\\1&0
\end{pmatrix},
 \end{equation} 
 and the parameters of the residue conditions satisfy \eqref{res-sym}.

    Then the solution of the RH problem  \eqref{jump-y}--\eqref{res_hatM_} $\hat M$ that satisfies symmetries \eqref{sym_1} and \eqref{sym_2}
\end{proposition}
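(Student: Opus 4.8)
The plan is to establish both symmetries by the standard ``symmetry via uniqueness'' device: starting from $\hat M$ I construct, for each symmetry, a second matrix-valued function, verify that it satisfies \emph{every} defining condition of the RH problem \eqref{jump-y}--\eqref{res_hatM_}, and then invoke the uniqueness Proposition \ref{prop:uniqness} to conclude that this function coincides with $\hat M$. The two asserted identities \eqref{sym_1} and \eqref{sym_2} are precisely the statements that the two constructed functions equal $\hat M$. Throughout I write $\sigma_1:=\left(\begin{smallmatrix}0&1\\1&0\end{smallmatrix}\right)$ and exploit that $\hat p(y,t,k)=y-\frac{t}{2(k^2+1/4)}$ depends on $k$ only through $k^2$, so that $\hat p(y,t,-k)=\hat p(y,t,k)$ and $\hat p(y,t,-\bar k)=\overline{\hat p(y,t,k)}$.

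For \eqref{sym_2}, set $\tilde M(y,t,k):=\sigma_1\hat M(y,t,-k)\sigma_1$. Since $\Gamma$ is invariant under $k\mapsto-k$, $\tilde M$ is piecewise meromorphic off $\Gamma$, and it is normalized at $\infty$ because $\sigma_1 I\sigma_1=I$. Using $\hat p(y,t,-k)=\hat p(y,t,k)$ together with $\sigma_1\sigma_3\sigma_1=-\sigma_3$ one computes $\hat J(y,t,-k)=\sigma_1\hat J(y,t,k)\sigma_1$, where the hypothesis \eqref{sym_J_2} enters; with the contour symmetry this shows $\tilde M$ obeys the same jump \eqref{jump-y}. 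The behaviour at $k=0$ matches because $k\mapsto-k$ sends $\mathbb C_+$ to $\mathbb C_-$: feeding the $\mathbb C_-$ expansion \eqref{hatM_at_0nongen_-} of $\hat M(-k)$ into $\sigma_1(\cdot)\sigma_1$ recovers exactly \eqref{hatM_at_0nongen}, since $\sigma_1\left(\begin{smallmatrix}-1&-\delta\\1&\delta\end{smallmatrix}\right)\sigma_1=\left(\begin{smallmatrix}\delta&1\\-\delta&-1\end{smallmatrix}\right)$ and the $1/(-k)$ prefactor flips sign. Because $k\mapsto-k$ interchanges $\tfrac{\ii}{2}$ and $-\tfrac{\ii}{2}$, conjugating the structural condition \eqref{hatM_at_-i2} by $\sigma_1$ reproduces \eqref{hatM_at_i2} with the \emph{same} $\hat q,\hat f$. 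Finally, the residue conditions \eqref{res_hatM}--\eqref{res_hatM_} for $\tilde M$ follow from the hypothesis \eqref{res-sym} on the poles $\eta_j=-\mu_j$ and constants $d_j=-c_j$. Uniqueness then gives $\tilde M=\hat M$, i.e. \eqref{sym_2}.

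For \eqref{sym_1}, set $\tilde M(y,t,k):=\overline{\hat M(y,t,-\bar k)}$. The composition of the two antiholomorphic maps $k\mapsto-\bar k$ and $w\mapsto\bar w$ is holomorphic, so $\tilde M$ is again piecewise meromorphic off $\Gamma$ (invariant under $k\mapsto-\bar k$) and tends to $I$ at $\infty$. The crucial identity is $\ii(-\bar k)\hat p(y,t,-\bar k)=\overline{\ii k\hat p(y,t,k)}$, which makes the conjugated exponential factors in $\hat J$ transform correctly; combined with \eqref{sym_J_1} this yields the jump \eqref{jump-y} for $\tilde M$. Since $k\mapsto-\bar k$ preserves $\mathbb C_+$ and fixes $\tfrac{\ii}{2}$, and since $\hat c,\delta,\hat q,\hat f$ are real, the singularity condition \eqref{hatM_at_0nongen} at $0$ and the structural condition \eqref{hatM_at_i2} at $\tfrac{\ii}{2}$ are preserved: each prescribed matrix has real entries and is left unchanged by the reflection composed with conjugation. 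The residue conditions again follow from \eqref{res-sym}. Uniqueness gives $\tilde M=\hat M$, i.e. \eqref{sym_1}.

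The routine part is the algebra confirming that each prescribed local form (at $\infty$, $0$, and $\pm\tfrac{\ii}{2}$) is invariant under the two transformations. I expect the only genuinely delicate point to be the bookkeeping of orientation and of the boundary values $\hat M_\pm$ in the jump step: under $k\mapsto-k$ the real axis reverses orientation, while under $k\mapsto-\bar k$ the two sides of $\Gamma$ must be tracked against the conjugation, so one must check that the ``$+$'' and ``$-$'' sides are matched consistently so that the transformed relation reads $\tilde M_+=\tilde M_-\hat J$ and not its inverse. This is exactly where the invariance of $\Gamma$ and the precise forms \eqref{sym_J_1}--\eqref{sym_J_2} of $\hat J_0$ are needed; once the exponential-conjugation identities above are in hand, the verification closes.
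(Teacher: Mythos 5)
Your proposal is correct and follows essentially the same route as the paper: construct $\overline{\hat M(y,t,-\bar k)}$ and $\left(\begin{smallmatrix}0&1\\1&0\end{smallmatrix}\right)\hat M(y,t,-k)\left(\begin{smallmatrix}0&1\\1&0\end{smallmatrix}\right)$, verify via \eqref{sym_J_1}--\eqref{sym_J_2}, \eqref{res-sym}, and the evenness/reality of $\hat p$ that each satisfies the full set of conditions \eqref{jump-y}--\eqref{res_hatM_}, and conclude by the uniqueness result of Proposition \ref{prop:uniqness}. In fact you supply more detail than the paper's proof (which dismisses the local conditions as ``direct computations''), including the correct exponential identities and an explicit flag of the orientation bookkeeping on the jump contour.
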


\begin{proof}
    In order to prove this statement, we will show that (i) $\overline{\hat M(-\bar k)}$ satisfies the same RH problem as $\hat M(k)$, (i) $\begin{pmatrix}
    0&1\\1&0
\end{pmatrix}\hat M(-k)\begin{pmatrix}
    0&1\\1&0
\end{pmatrix}$ satisfies the same RH problem as $\hat M(k)$.

\begin{enumerate}[(i)]
    \item Direct computations show that $\overline{\hat M(-\bar k)}$ satisfies the same normalization condition, singularity condition at $0$ and structural condition at $\pm\frac{\ii}{2}$ as $\hat M(k)$. Moreover, using $\overline{\hat p(y,t,-\bar k)}=\hat p(y,t,k)$ together with \eqref{sym_J_1} and \eqref{res-sym} we can conclude that $\overline{\hat M(-\bar k)}$ satisfies the same jump condition and the same residue condition as $\hat M(k)$. 
    Since the solution of RH problem   \eqref{jump-y}--\eqref{res_hatM_} is unique, we  conclude that $\hat M(k)=\overline{\hat M(-\bar k)}$.

    \item Similarly, direct computations show that $\begin{pmatrix}
    0&1\\1&0
\end{pmatrix}\hat M(-k)\begin{pmatrix}
    0&1\\1&0
\end{pmatrix}$ satisfies the same normalization condition, singularity condition at $0$ and structural condition at $\pm\frac{\ii}{2}$. Moreover, using $\hat p(y,t,- k)=\hat p(y,t,k)$ together with \eqref{sym_J_2} and \eqref{res-sym} we can conclude that $\begin{pmatrix}
    0&1\\1&0
\end{pmatrix}\hat M(-k)\begin{pmatrix}
    0&1\\1&0
\end{pmatrix}$ satisfies the same jump condition and the same residue condition as $\hat M(k)$. 
Since the solution of RH problem   \eqref{jump-y}--\eqref{res_hatM_} is unique, we conclude that $\hat M(k)=\begin{pmatrix}
    0&1\\1&0
\end{pmatrix}\hat M(-k)\begin{pmatrix}
    0&1\\1&0
\end{pmatrix}$.
\end{enumerate}
\end{proof}

\section{RH problem formalism for the periodic CH problem in $y,t$ variables}\label{sec:RHpbl}

In Section \ref{sec:CH-y} we discussed how a function solving, locally, the CH equation 
in $y,t$ variables appears from the solution of a Riemann-Hilbert
problem parametrized by $y$ and $t$ in an appropriate way. In this section, we 
present a particular  Riemann-Hilbert problem associated with the periodic problem
for the CH equation in $y,t$ variables in the sense (i) that  it can be formulated in terms
of the spectral functions determined by the initial data only, (ii) whose solution
evaluated at $t=0$ recovers the initial data, and (iii) whose solution provides
the periodicity, i.e.  ensures that the values of the solution 
of the CH equation at the ends of the interval match.

\subsection{Construction of the master RH problem}

It is  the pre-RH problem associated with the problem on the interval that \emph{suggests}
considering the RH problem with the jump conditions (i) parametrized by $y$ and $t$
and (ii) given in terms of the spectral functions $a(k)$, $b(k)$, 
and \emph{ a function ${\mathfrak R}(k)$ that can be calculated in terms of $a(k)$ and $b(k)$}.

Namely, we already noticed that in the case of the periodic problem, 
$S_1(k)=S(k)$ and thus $R(k)=R_1(k)$ and $\tilde R(k)=\tilde R_1(k)$.
It follows that in this case, the l.h.s. of the Global Relations 
\eqref{gl_rel} and \eqref{gl_rel_til}, after dividing respectively by
$A(k)A_1(k)$ and $\tilde A(k)\tilde A_1(k)$, becomes 
 quadratic expressions w.r.t. respectively $R(k)$ and $\tilde R(k)$,
 with coefficients expressed in terms of $a$, $b$, $\tilde a$, and $\tilde b$.
 The decisive step consists in replacing the r.h.s. in the Global relations by $0$,
 which gives two quadratic equations, respectively for $\mathfrak R$ and $\tilde{\mathfrak R}$:
 \begin{equation}\label{calR}
   -e^{2ik\theta}b^*(k){\mathfrak R}^2(k)
		+ \left(e^{2ik\theta} a^*(k) - a(k)\right) {\mathfrak R}(k)  +b(k) = 0  
 \end{equation}
        and 
\[
		-e^{2ikL}\tilde b^*(k){\tilde{\mathfrak R}}^2(k)
		+ \left(e^{2ikL} \tilde a^*(k) - \tilde a(k)\right) {\tilde{\mathfrak R}}(k)  +\tilde b(k) = 0.
		\]
         Taking into consideration that $\tilde a(k)=a(k) e^{ik(L-\theta)}$ and 
         $\tilde b(k)=b(k) e^{ik(L-\theta)}$, the latter equation reduces to \eqref{calR},
         which will be taken as the (part of) \emph{definitions} of functions 
         $\mathfrak R(k)$ and ${\tilde{\mathfrak R}}(k)$. 

Being solutions of a  quadratic equation, ${\mathfrak R}(k)$ and $\tilde{\mathfrak R}(k)$ 
 can be expressed analytically by  
		\begin{equation}\label{calR-form}
		    {\mathfrak R}(k) = 
		\frac{e^{-ik\theta}a(k)-e^{ik\theta}a^*(k)
			-\sqrt{\Delta^2({k})-4}}{-e^{ik\theta}b^*(k)}
		\end{equation}
        and similarly for $\tilde{\mathfrak R}(k)$,
	where $\Delta ({k}) :=e^{-ik\theta}a(k) + e^{ik\theta}a^*(k)$ and where  
		in order to fix a one-valued branches of ${\mathfrak R}(k)$  and 
        $\tilde{\mathfrak R}(k)$ (which can be different) for $k$ in the complex plane, one needs to introduce cuts
        connecting 
		simple zeros of $\Delta^2({z})-4$} and to specify the values 
        of ${\mathfrak R}(k)$  and 
        $\tilde{\mathfrak R}(k)$ at dedicated  points.      
        Looking at the r.h.s. of the Global Relations suggests to
specify them as follows:
\begin{equation}\label{spec_R}
{\mathfrak R}(\infty)=0, \quad \tilde{\mathfrak R}\left(\frac{i}{2}\right)=0.    
\end{equation}

Notice that there are two possibilities: either 
${\mathfrak R}\left(\frac{i}{2}\right)=0$ and thus
${\mathfrak R}(k)\equiv\tilde{\mathfrak R}(k)$ or 
${\mathfrak R}\left(\frac{i}{2}\right)\ne 0$ and thus
${\mathfrak R}(k)$ and $\tilde{\mathfrak R}(k)$  are the different branches of the same analytic function defined on the corresponding Riemannian surface.

Before passing to the formulation of the RH problem with the jump given in terms of 
$a(k)$, $b(k)$, ${\mathfrak R}(k)$,  and $\tilde{\mathfrak R}(k)$, we present the analytic 
properties of ${\mathfrak R}(k)$  and $\tilde{\mathfrak R}(k)$ following their definitions.
But before doing that, we notice that the scattering matrix $s(k)$
in the case when $m$ in the first equation of the Lax pair \eqref{lax} (considered for $t=0$)
equals $0$ outside the interval $x\in[0,L]$ is related to the monodromy 
matrix $\mathcal M(k):= \Phi(L,0,k)$, where $\Phi(x,L,k)$ is  
the solution of this equation fixed by the condition $\tilde \Phi(0,0,k)=I$.
Namely, 
\[
\mathcal M(k)=
\eul^{-ikp(L,0,k)\sigma_3} s^{-1}(k)=
\begin{pmatrix}
          a(k) \eul^{-\ii k \theta} & -b(k)\eul^{-\ii k \theta}\\
        -{b^*( k)}\eul^{\ii k \theta} & {a^*(k)}\eul^{\ii k \theta}  
\end{pmatrix}.
\]

\begin{proposition}\label{prop:R}
    (1) Let $\mathfrak{R}_{(1)}(k)$ and $\mathfrak{R}_{(2)}(k)$ be two solutions of \eqref{gl_rel}
    as functions in the complex plane with the branch cuts as above.
Then the following properties hold:

    \begin{equation}\label{propR1}
        \mathfrak{R}_{(1)}(k)\mathfrak{R}_{(2)}(k)=-\frac{\mathcal M_{12}(k)}{\mathcal M_{21}(k)}
        =-\frac{b(k)\eul^{-\ii k \theta}}{{b^*(k)}\eul^{\ii k \theta}}
        =-\frac{\tilde b(k)\eul^{-\ii k L}}{{\tilde b^*(k)}\eul^{\ii k L}} ,
    \end{equation}
        \begin{equation}\label{propR2}
        \mathfrak{R}_{(1)}(k)+\mathfrak{R}_{(2)}(k)=\frac{\mathcal M_{11}(k)-\mathcal M_{22}(k)}{\mathcal M_{21}(k)}
        =-\frac{a(k)\eul^{-\ii k \theta}-{a^*(k)}\eul^{\ii k \theta}}{{b^*(k)}\eul^{\ii k \theta}}
        =-\frac{\tilde a(k)\eul^{-\ii k L}-{\tilde a^*(k)}\eul^{\ii k L}}{{\tilde b^*(k)}\eul^{\ii k L}},
    \end{equation}

            \begin{equation}\label{propR3}
        \mathfrak{R}_{(2)}^*(k)=\frac{1}{\mathfrak{R}_{(1)}(k)}, \quad \mathfrak{R}_{(1)}^*(k)=\frac{1}{\mathfrak{R}_{(2)}(k)}.
    \end{equation}

(2) Let $\mathfrak{R}(k)$ be either $\mathfrak{R}_{(1)}(k)$ or $\mathfrak{R}_{(2)}(k)$.
Then
\begin{equation}\label{propR5}
      \mathfrak{R}(k) {b^*(k)}\eul^{\ii k \theta}=-\mathfrak{R}^*(k)b(k)\eul^{-\ii k \theta},
    \end{equation}

    \begin{equation}\label{propR6}
        (a(k)-b(k)\mathfrak{R}^*(k))(a^*(k)-b^*(k)\mathfrak{R}(k))=1
    \end{equation}
and thus 
       \item 
    \begin{equation}\label{propR7}
        (a-b\mathfrak{R}^*)(k)\neq 0 \ \ \text{for all}\ k.
    \end{equation} 

(3) Let $\mathfrak{R}_\pm (k)$ be the limiting values of $\mathfrak{R}$ as its argument approaches 
           the (oriented) branch cut from the corresponding side. Then
    \begin{equation}\label{propR8}
        \mathfrak{R}_+(k) \mathfrak{R}^{*}_-(k)=\mathfrak{R}_-(k) \mathfrak{R}^{*}_+(k)=1,
    \end{equation}  

        \begin{equation}\label{propR9}
        (a^*-b^*\mathfrak{R}_-)(k)=\eul^{-2\ii k \theta} (a-b\mathfrak{R}^{*}_+)(k),\quad (a-b\mathfrak{R}^{*}_-)(k)=\eul^{2\ii k \theta}(a^*-b^*\mathfrak{R}_+)(k).
    \end{equation} 

\end{proposition}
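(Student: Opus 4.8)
The plan is to treat the defining quadratic \eqref{calR} as a quadratic in $\mathfrak{R}$, read off \eqref{propR1} and \eqref{propR2} directly from Vieta's formulas, and then bootstrap every remaining identity from a single symmetry of \eqref{calR} (the involution $\mathfrak{R}\mapsto 1/\mathfrak{R}^*$) together with \eqref{calR} itself and the determinant relation \eqref{det_rel}.

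\emph{Part (1).} Writing \eqref{calR} as $-\eul^{2\ii k\theta}b^*\mathfrak{R}^2+(\eul^{2\ii k\theta}a^*-a)\mathfrak{R}+b=0$, Vieta gives $\mathfrak{R}_{(1)}\mathfrak{R}_{(2)}=-\eul^{-\ii k\theta}b/(\eul^{\ii k\theta}b^*)$ and $\mathfrak{R}_{(1)}+\mathfrak{R}_{(2)}=-(\eul^{-\ii k\theta}a-\eul^{\ii k\theta}a^*)/(\eul^{\ii k\theta}b^*)$. Substituting the entries of $\mathcal M(k)$ recorded just before the proposition identifies these with $-\mathcal M_{12}/\mathcal M_{21}$ and $(\mathcal M_{11}-\mathcal M_{22})/\mathcal M_{21}$, while $\tilde a=a\eul^{\ii k(L-\theta)}$, $\tilde b=b\eul^{\ii k(L-\theta)}$ from \eqref{s-tilde-s} yield the tilded expressions; this part is purely algebraic.

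\emph{Identities \eqref{propR3} and \eqref{propR5}.} First I would check that $T\colon\mathfrak{R}\mapsto 1/\mathfrak{R}^*$ maps solutions of \eqref{calR} to solutions: conjugating \eqref{calR} evaluated at $\bar k$ and using $\overline{a(\bar k)}=a^*(k)$, $\overline{b(\bar k)}=b^*(k)$, $\overline{\eul^{2\ii\bar k\theta}}=\eul^{-2\ii k\theta}$ turns it, after the substitution $w=1/\mathfrak{R}^*$, back into the very same quadratic. Since $T$ is an involution ($\mathfrak{R}^{**}=\mathfrak{R}$) acting on a two-element set it is either the identity or the transposition; the normalization $\mathfrak{R}(\infty)=0$ from \eqref{spec_R} gives $T(\mathfrak{R}_{(1)})(\infty)=\infty\neq0$, ruling out the identity, so $T$ transposes the two roots, which is exactly \eqref{propR3}. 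Then \eqref{propR5} follows by combining \eqref{propR3} with \eqref{propR1}, since $\mathfrak{R}/\mathfrak{R}^*=\mathfrak{R}_{(1)}\mathfrak{R}_{(2)}$ for each branch.

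\emph{Part (2).} For \eqref{propR6} I would expand the product and replace $aa^*$ by $1+bb^*$ using \eqref{det_rel}, obtaining $1+bb^*-ab^*\mathfrak{R}-a^*b\mathfrak{R}^*+bb^*\mathfrak{R}\mathfrak{R}^*$; eliminating $\mathfrak{R}^*$ through \eqref{propR5} and factoring out $b^*$ collapses the last four terms into precisely the left-hand side of \eqref{calR}, hence into $0$. Estimate \eqref{propR7} is then immediate, as a product equal to $1$ admits no vanishing factor.

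\emph{Part (3).} I would use that across each branch cut the sign of $\sqrt{\Delta^2-4}$ in \eqref{calR-form} flips, so the two boundary values $\mathfrak{R}_\pm$ coincide there with the two roots $\mathfrak{R}_{(1)},\mathfrak{R}_{(2)}$; then \eqref{propR8} is just \eqref{propR3} read on the cut. For \eqref{propR9}, dividing \eqref{calR} by $\mathfrak{R}$ gives the pointwise identity $a^*-b^*\mathfrak{R}=\eul^{-2\ii k\theta}(a-b/\mathfrak{R})$ (and its star $a-b\mathfrak{R}^*=\eul^{2\ii k\theta}(a^*-b^*/\mathfrak{R}^*)$), into which I would insert $1/\mathfrak{R}_-=\mathfrak{R}^*_+$ and $1/\mathfrak{R}^*_-=\mathfrak{R}_+$ from \eqref{propR8} to produce both relations. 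The main obstacle will be this last part: making rigorous the identification of $\mathfrak{R}_\pm$ with the two roots on the cut and the interplay between the boundary-value operation $(\cdot)_\pm$ and the star $(\cdot)^*$ (note $\mathfrak{R}^*_\mp=\overline{\mathfrak{R}_\pm}$ on a real cut), which requires locating the simple zeros of $\Delta^2-4$ and verifying that the cut configuration is conjugation-symmetric — the one genuinely analytic input, everything else being algebra on \eqref{calR}.
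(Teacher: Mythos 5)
Your proposal is correct, and it is essentially the derivation the paper intends: the paper states Proposition~\ref{prop:R} without proof (the properties are said to follow ``from their definitions''), and your route --- Vieta's formulas on \eqref{calR} for \eqref{propR1}--\eqref{propR2}, the involution $\mathfrak{R}\mapsto 1/\mathfrak{R}^*$ preserving \eqref{calR} (pinned down by the normalization $\mathfrak{R}(\infty)=0$) for \eqref{propR3} and \eqref{propR5}, the determinant relation \eqref{det_rel} collapsing the expansion of \eqref{propR6} onto the left-hand side of \eqref{calR}, and the sign flip of $\sqrt{\Delta^2-4}$ across the cuts identifying $\mathfrak{R}_\pm$ with the two roots for \eqref{propR8}--\eqref{propR9} --- is exactly the algebra on \eqref{calR} that substantiates it. The one point you rightly flag, that $1/\mathfrak{R}_{(1)}^*$ must coincide with a \emph{single} branch globally, is settled by connectedness of the cut plane together with the fact that the two roots are distinct away from the zeros of $\Delta^2-4$, and your conjugation-symmetry caveat for the cuts is consistent with the paper's configuration (real-axis segments invariant, imaginary-axis segments swapped, matching \eqref{propR10}).
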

Additionally, \begin{equation}\label{propR10}
        \mathfrak{R}(-k)=\mathfrak R^*(k),
    \end{equation}
    due to the corresponding symmetries $a(k)$ and $b(k)$.

The RH problem formalism for the periodic problem for the CH equation involves the following
steps.

\noindent\textbf{Step 1}. Formulate the RH problem for $\hat M(y,t,k)$, depending on $y$ and $t$ as parameters,
providing
\begin{itemize}
    \item Jump conditions
    \begin{equation}\label{hatM-jump}
        \hat M_+(y,t,k)= \hat M_-(y,t,k)\hat J(y,t,k), \quad k\in \hat\Sigma
    \end{equation}
    replacing in \eqref{jump-xt} and \eqref{j0-xt}
    $p(x,t,k)$ by $\hat p(y,t,k)=y+\frac{t}{2\lambda}=y-\frac{t}{2(k^2+\frac{1}{4})}$,
    $R(k)$ and $R_1(k)$ by $\mathfrak R(k)$, and $\tilde R(k)$ and $\tilde R_1(k)$ 
    by $\tilde{\mathfrak R}(k)$;
    \item 
    Normalization condition $\hat M(k)\to I$ as $k\to \infty$;
    \item 
    Singularity conditions as $ k\to 0$:
     \begin{equation}\label{hatM_at_0}
  \hat M(y,t,k)=  \frac{\ii}{k} \hat c(y,t)\begin{pmatrix}
        0&1\\0&-1
    \end{pmatrix}+O(1), \quad k\in\mathbb{C}_+,
\end{equation}
\begin{equation}\label{hatM_at_0_-}
   \hat M(y,t,k)=  -\frac{\ii}{k} \hat c(y,t)\begin{pmatrix}
        -1&0\\1&0
    \end{pmatrix}+O(1),  \quad k\in\mathbb{C}_-,
\end{equation} 
with some (unspecified) $\hat c(y,t)\in\mathbb{R}$;
\item 
Structural condition 
\begin{equation}\label{hatM_at_i2_}
    \hat M(y,t,k)=\frac{1}{2}\begin{pmatrix}
    ( \hat q+\hat q^{-1})\hat f&(\hat q-\hat q^{-1})\hat f^{-1}\\
   (\hat q-\hat q^{-1})f&(\hat q+\hat q^{-1})\hat f^{-1}
\end{pmatrix}+
O\left(k-\tfrac{\ii}{2}\right)
\end{equation}
where $\hat q=\hat q(y,t)$, $\hat f=\hat f(y,t)$ are not specified;

\item Residue conditions at zeros $\mu_j$ of $\mathfrak R(k)$ in $D_1$ and $D_3$
and at zeros  $\bar\mu_j$ of $\mathfrak R^*(k)$ in $D_2$ and $D_4$:
\begin{subequations}\label{res_hatM_R}
    \begin{align}
    \label{res_hatM-a}
\Res_{\mu_j}\hat M^{(2)}&= \hat M^{(1)}(\mu_j)a^2(\mu_j) \eul^{-2\ii \mu_j \hat p(\mu_j)} \Res_{\mu_j} \mathfrak R, ~ \mu_j\in D_1,\\
\label{res_hatM-b}
\Res_{\bar\mu_j}\hat M^{(1)}&= \hat M^{(2)}(\bar\mu_j)\eul^{-2\ii \bar\mu_j\theta} \eul^{2\ii \bar\mu_j \hat p(\bar\mu_j)} \Res_{\bar\mu_j} {\mathfrak R}^*, ~ \bar\mu_j\in D_2, \\ 
\label{res_hatM-c}
\Res_{\mu_j}\hat M^{(2)}&=\hat M^{(1)}(\mu_j)  \eul^{2\ii \mu_j \theta} \eul^{-2\ii \mu_j \hat p(\mu_j)}\Res_{\mu_j} \mathfrak R,~\mu_j\in D_3, \\
\label{res_hatM-d}
\Res_{\bar\mu_j}\hat M^{(1)}&=\hat M^{(2)}(\bar\mu_j) a^{*2}(\bar\mu_j) \eul^{2\ii \bar\mu_j \hat p(\bar\mu_j)}\Res_{\bar\mu_j} {\mathfrak R}^*,~ \bar\mu_j\in D_4. 
\end{align}
\end{subequations}

\end{itemize}

\noindent\textbf{Step 2}. Prove that the solution of the RH problem above,
being evaluated at $t=0$, gives rise to the initial condition $\hat u(y,0)$.

\smallskip

\noindent\textbf{Step 3}. Prove that the solution of the RH problem above
gives rise to the solution of the CH equation periodic in $y$ for all fixed $t>0$.

\begin{remark}
    Conditions at $k=0$ formulated above correspond to the generic case, 
    when $a(k)$ and $b(k)$ are singular at this point. The general case 
    (see item (3) of Proposition \ref{prop:RHP-xt})
    can be treated as well, but it order to fix the ideas and to make the presentation 
    more concise, in the present paper  we restrict ourselves to the generic case.
\end{remark}
\begin{remark}
The fact that the construction of the master  RH problem involves two functions,
${\mathfrak R}(k)$ and $\tilde{\mathfrak R}(k)$, that replace the spectral functions
associated with the boundary values in the pre-RH problem, distinguishes the case of the 
CH equation from other nonlinear PDE like the NLS equation, the KdV equation and its modifications, etc. This is due to an additional singularity in the associated
$t$-equation of the Lax pair, which in turn gives rise to two global relations.
Remarkably, these two functions appear to be branches of a single function analytic 
on two-sheeted Riemannian surface. Moreover, they can be either different branches
or a single branch. In the latter case, where ${\mathfrak R}(k)=\tilde{\mathfrak R}(k)$,
the construction of the RH problem simplifies. 
\end{remark}

Now let's make the jump conditions \eqref{hatM-jump} precise.
First, we notice that since we are going to replace analytic functions (in the pre-RH problem) by ones having jumps across the branch cuts (${\mathfrak R}(k)$ and 
${\mathfrak R}^*(k)$), the resulting jump contour $\hat\Sigma$ has to include the branch cuts:
$\hat\Sigma =\check\Sigma\cup_{j=1}^N\Sigma_j$, where 
$\check\Sigma = \Sigma\setminus\{\cup_{j=1}^N\Sigma_j\} $ and
$\{\Sigma_j\}$ are segments connecting 
simple zeros of $\Delta^2({k})-4$.
Due to the symmetry properties of the Lax pair equations, these zeros   are located either on the real axis or on the imaginary
axis inside $\D{T}=\{k: |k|=\frac{1}{2}\}$.
In what follows we assume that there is a finite number 
of such zeros.

A typical $\hat\Sigma$ is shown on Figure \ref{fig:hatSigma}, where the small circles (of radius 
$\epsilon$) are chosen such that $\{\Sigma_j\}$
do not intersect the corresponding disks.
However, it is possible that the point $k=0$ belongs to either a vertical or a horizontal
segment. 

Then $\hat J(y,t,k) = \ee^{-ik\hat p \sigma_3}\hat J_0(k)\ee^{ik\hat p \sigma_3}$,
where $\hat J_0(k)$ is as follows.

\noindent\textbf{(i) $k\in\check\Sigma$}. Here the construction of $\hat J_0(k)$
follows that of $J_0(k)$ \eqref{j0-xt}, 
 with 
$\Gamma(k)$ and $\tilde\Gamma(k)$ replaced by respectively  $G(k)$ and
$\tilde G(k)$,  and $\Gamma_j(k)$ replaced by   $G_j(k)$,
$j=1,2,3$:
\begin{equation}\label{j0-yt}
    \hat J_0(k)=
    \begin{cases}
\begin{pmatrix}
    \ee^{\ii k(L-\theta)}& G_2(k)\\
   0 &\ee^{-\ii k(L-\theta)}
\end{pmatrix}, & |k|>\frac{1}{2}, |k-\frac{\ii}{2}|=\epsilon,\\
\begin{pmatrix}
          1& -\tilde{G}_1(k)\\
         0&1   
\end{pmatrix}
\begin{pmatrix}
          1& 0\\
         \tilde{G}(k)&1   
\end{pmatrix}, &   |k|=\frac{1}{2}, |k-\frac{\ii}{2}|<\epsilon,\\
\begin{pmatrix}
 \ee^{\ii k(L-\theta)}&0\\
  {G}_3(k)&\ee^{-\ii k(L-\theta)}
\end{pmatrix}, & |k|<\frac{1}{2}, |k-\frac{\ii}{2}|=\epsilon,\\
\begin{pmatrix}
          1& -{G}_1(k)\\
         0&1   
\end{pmatrix}
\begin{pmatrix}
          1& 0\\
         {G}(k)&1   
\end{pmatrix},& \Im k > 0, |k|=\frac{1}{2}, |k-\frac{\ii}{2}|>\epsilon,\\
\begin{pmatrix}
          1& 0 \\ -{G}^*_1(k) & 1
\end{pmatrix}
\begin{pmatrix}
          1 - |r(k)|^2& r^*(k)\\
         -r(k)&1  
\end{pmatrix}
\begin{pmatrix}
          1& G_1(k)\\
         0&1   
\end{pmatrix}, &  \Im k = 0, |k|>\frac{1}{2},\\
\begin{pmatrix}
          1& -{G}^*( k)\\
         0&1   
\end{pmatrix}
\begin{pmatrix}
          1 - |r(k)|^2& r^*(k)\\
         -r(k)&1  
\end{pmatrix}
\begin{pmatrix}
          1& 0\\
         {G}(k)&1   
\end{pmatrix}, &  \Im k = 0, |k|<\frac{1}{2}
\end{cases}
\end{equation}
on parts of $\Sigma$ in the upper half-plane 
and 
$\hat J_0(k)=
\left(\begin{smallmatrix}
    0 & 1 \\ 1 & 0
\end{smallmatrix}\right)\hat J_0^*(k)\left(\begin{smallmatrix}
    0 & 1 \\ 1 & 0
\end{smallmatrix}\right)$ for $k\in{\mathbb C}_-$. 
Here
\begin{align}\label{G-G1}
        G&:=\frac{{\mathfrak R}^*}{a(a-b{\mathfrak R}^*)}={\mathfrak R}^*\ee^{-2\ii k \theta}+\frac{b^*}{a}, 
        \quad
    G_1:=\frac{\ee^{2\ii k \theta}a{\mathfrak R}}{a-b{\mathfrak R}^* }=a^2({\mathfrak R}-\frac{b}{a})\\ \label{tildeG-G1}
 \tilde G&:=\frac{ \tilde{\mathfrak R}^*}{ \tilde a( \tilde a- \tilde b 
 \tilde{\mathfrak R}^*)}= \tilde{\mathfrak R}^*\ee^{-2\ii k L}+\frac{ \tilde b^*}{ \tilde a},\quad
     \tilde G_1:=\frac{\ee^{2\ii k L} \tilde a \tilde{\mathfrak R}}{ \tilde a- \tilde b \tilde{\mathfrak R}^* }= \tilde a^2( \tilde{\mathfrak R}-\frac{ \tilde b}{ \tilde a}),\\
     \label{G2}
      G_2&:=a\tilde a\ee^{\ii k (L+\theta)}\frac{{\mathfrak R}-\tilde{\mathfrak R}}{(a+b^*{\mathfrak R}\ee^{2\ii k \theta})(\tilde a+\tilde b^*\tilde{\mathfrak R}\ee^{2\ii k L})}=\begin{cases}
          0, & \text{if}\ \mathfrak R = \tilde{\mathfrak R},\\
          a\tilde a({\mathfrak R}-\tilde{\mathfrak R}), & \text{if}\ \mathfrak R \ne \tilde{\mathfrak R},
      \end{cases}\\   \label{G3}
      G_3&:=\frac{{\mathfrak R}^*-\tilde{\mathfrak R}^*}{(a-b{\mathfrak R}^*)( \tilde a- \tilde b \tilde{\mathfrak R}^*)}=\begin{cases}
          0, & \text{if}\ \mathfrak R = \tilde{\mathfrak R},\\
      \ee^{-\ii k(L+\theta)}({\mathfrak R}^*-
      \tilde{\mathfrak R}^*), &  \text{if}\ \mathfrak R \ne \tilde{\mathfrak R}.
      \end{cases}
\end{align}
(we have used properties of ${\mathfrak R}$ and $\tilde{\mathfrak R}$
from Proposition \ref{prop:R}).

\medskip

\noindent\textbf{(ii)}
On $\{\Sigma_j\}$, the jumps are given in terms of the jumps of ${\mathfrak R}$; namely,
on the the vertical segments,
	\begin{equation}\label{sigma_j_ver}
	    \hat J_0(k)=\begin{cases} \begin{pmatrix}
		1  & \ee^{2\ii k \theta}({\mathfrak R}_+(k)-{\mathfrak R}_-(k)) \\ 0 & 1
	\end{pmatrix}, & k \in \Sigma_j\cap \D{C}_-\\
	\begin{pmatrix}
		1  & 0 \\
		\ee^{-2\ii k \theta}({{\mathfrak R}^*}_+(k)-{{\mathfrak R}^*}_-(k))  & 1
	\end{pmatrix}, & k \in \Sigma_j\cap \D{C}_+
	\end{cases}
	\end{equation}
    whereas on the horizontal segments,
\begin{equation}\label{sigma_j_hor}
\hat J_0(k)=\begin{cases}
    \begin{pmatrix}
          1& 0 \\ -{G}^*_{1-}(k) & 1
\end{pmatrix}
\begin{pmatrix}
          1 - |r(k)|^2& r^*(k)\\
         -r(k)&1  
\end{pmatrix}
\begin{pmatrix}
          1& G_{1+}(k)\\
         0&1   
\end{pmatrix}, &  \Im k = 0, |k|>\frac{1}{2},\\
\begin{pmatrix}
          1& -{G}_-^*( k)\\
         0&1   
\end{pmatrix}
\begin{pmatrix}
          1 - |r(k)|^2& r^*(k)\\
         -r(k)&1  
\end{pmatrix}
\begin{pmatrix}
          1& 0\\
         {G}_+(k)&1   
\end{pmatrix}, &  \Im k = 0, |k|<\frac{1}{2}.
\end{cases}
    \end{equation}

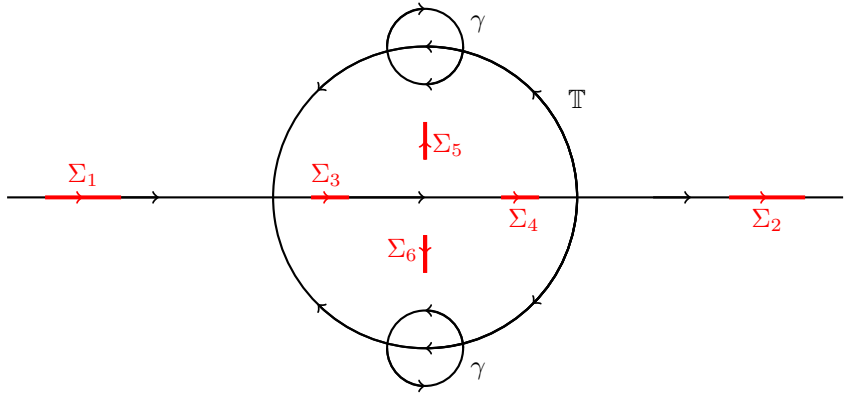
\begin{figure}
    \centering
    \begin{tikzpicture}
    \draw[thick] (0,0) circle (2);
 \draw[thick] (-5.5,0) -- (5.5,0);
    \draw[->, thick] (2,0) arc[start angle=0, end angle=90, radius=2];
    \draw[->, thick] (2,0) arc[start angle=360, end angle=270, radius=2];
 \draw[->, thick] (3,0) -- (3.5,0);
  \draw[->, thick] (-4,0) -- (-3.5,0);
    \draw[->, thick] (-1,0) -- (0,0);
    
            \draw[line width=1.5pt, red] (0, -0.5) -- (0, -1);
             \draw[thick, red, ->] (0, -0.5) -- (0, -0.75);
        \draw[line width=1.5pt, red] (0, 0.5) -- (0, 1);
\draw[thick, red, ->] (0, 0.5) -- (0, 0.75);

                    \draw[line width=1.5pt, red] (0, -0.5) -- (0, -1);
        \draw[line width=1.5pt, red] (-5, 0) -- (-4, 0);
\draw[thick, red, ->] (-5, 0) -- (-4.5, 0);

        \draw[line width=1.5pt, red] (4, 0) -- (5, 0);
\draw[thick, red, ->] (4, 0) -- (4.5, 0);

        \draw[line width=1.5pt, red] (-1.5, 0) -- (-1, 0);
\draw[thick, red, ->] (-1.5, 0) -- (-1.25, 0);

        \draw[line width=1.5pt, red] (1, 0) -- (1.5, 0);
\draw[thick, red, ->] (1, 0) -- (1.25, 0);

\draw[->, thick] (0.5,2) arc[start angle=0, end angle=-90, radius=0.5];
\draw[->, thick] (-0.5,2) arc[start angle=180, end angle=90, radius=0.5];

\draw[->, thick] (0.5,-2) arc[start angle=0, end angle=90, radius=0.5];
\draw[->, thick] (-0.5,-2) arc[start angle=180, end angle=270, radius=0.5];

\draw[thick] (0,2) circle (0.5);

\draw[thick] (0,-2) circle (0.5);

    \draw[->, thick] (2,0) arc[start angle=0, end angle=45, radius=2];
    \draw[->, thick] (2,0) arc[start angle=0, end angle=135, radius=2];

    \draw[->, thick] (2,0) arc[start angle=0, end angle=-45, radius=2];
        \draw[->, thick] (2,0) arc[start angle=0, end angle=-135, radius=2];

         \node at (0.7, 2.3) {$\gamma$};

         \node at (0.7, -2.3) {$\gamma$};

      \node at (2, 1.3) {$\mathbb{T}$};

              \node at (-4.5, 0.3) {$\red{\Sigma_1}$};

         \node at (4.5, -0.3) {$\red{\Sigma_2}$};

                   \node at (-1.3, 0.3) {$\red{\Sigma_3}$};

         \node at (1.3, -0.3) {$\red{\Sigma_4}$};

                         \node at (0.3, 0.7) {$\red{\Sigma_5}$};

         \node at (-0.3, -0.7) {$\red{\Sigma_6}$};

\end{tikzpicture}
    \caption{Contour $\hat\Sigma$}
    \label{fig:hatSigma}
\end{figure}

\begin{proposition}\label{prop:}
    For all $y\in [0,\theta]$ and $t>0$, if the RH problem \eqref{hatM-jump}-\eqref{res_hatM_R},
    where the jump matrix $\hat J_0(k)$ is given by \eqref{j0-yt}-\eqref{sigma_j_hor},
    has a solution $\hat M(y,t,k)$, it is unique. Moreover, 
    $\hat M(y,t,k) = \left(
    \begin{smallmatrix}
        0 & 1 \\ 1 & 0
    \end{smallmatrix}\right) \hat M(y,t,-k)
    \left(\begin{smallmatrix}
        0 & 1 \\ 1 & 0
    \end{smallmatrix}\right)$.
\end{proposition}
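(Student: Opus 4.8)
The proof splits into uniqueness and the reflection symmetry, each following a template from Section~\ref{sec:CH-y}. For uniqueness I would repeat the argument of Proposition~\ref{prop:uniqness} in the case $\delta=0$, since the master problem's singularity conditions \eqref{hatM_at_0}--\eqref{hatM_at_0_-} are precisely the $\delta=0$ specialization of \eqref{hatM_at_0nongen}. Given two solutions $\hat M_1,\hat M_2$, set $P:=\hat M_1\hat M_2^{-1}$. Since \eqref{hatM-jump} is common to both, across every arc of $\hat\Sigma$ -- including the branch cuts, where \eqref{sigma_j_ver}--\eqref{sigma_j_hor} are also common -- one has $P_+=\hat M_{1-}\hat J\hat J^{-1}\hat M_{2-}^{-1}=P_-$, so $P$ is analytic across $\hat\Sigma$; at the nodes $\mu_j,\bar\mu_j$ the matched residue conditions \eqref{res_hatM_R} make $P$ regular, exactly as in the model case. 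Hence $P$ is meromorphic with a possible pole only at $0$ and $P\to I$ at $\infty$. After checking $\det\hat M_j\equiv1$ (no jump, no polar nodes, value $1$ at $\infty$, and the coefficient of its simple pole at $0$ killed by $\det\hat M(\tfrac{\ii}{2})=1$, which follows from \eqref{hatM_at_i2_}, as in the determinant computation preceding Proposition~\ref{prop:uniqness}), feeding the $\delta=0$ expansions \eqref{hatM_at_0} into $P$ cancels the $k^{-2}$ term and $\det\hat M_j=1$ collapses the residue to the rank-one form
\begin{equation*}
P(y,t,k)=I+\frac{\ii\hat B(y,t)}{k}\begin{pmatrix}-1&-1\\1&1\end{pmatrix},
\end{equation*}
which is \eqref{Pexp}. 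Evaluating at $\tfrac{\ii}{2}$ as in \eqref{Pi2} and imposing \eqref{hatM_at_i2_} on $P\hat M_2=\hat M_1$ together with $\hat q_i,\hat f_i>0$ forces $\hat B=0$, i.e. $P\equiv I$ and $\hat M_1=\hat M_2$.

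For the reflection symmetry, writing $\sigma_1:=\left(\begin{smallmatrix}0&1\\1&0\end{smallmatrix}\right)$, I would set $\tilde M(y,t,k):=\sigma_1\hat M(y,t,-k)\sigma_1$ and show it solves the same master RH problem; uniqueness then gives $\hat M=\tilde M$, which is the asserted relation. The contour $\hat\Sigma$ is invariant under $k\mapsto-k$ (Figure~\ref{fig:hatSigma}) and $\hat p(y,t,-k)=\hat p(y,t,k)$, since $\hat p$ depends on $k$ only through $k^2$; the normalization is preserved; a direct check shows the two singularity conditions \eqref{hatM_at_0}--\eqref{hatM_at_0_-} are interchanged by $k\mapsto-k$ and conjugation by $\sigma_1$, and likewise the structural conditions at $\pm\tfrac{\ii}{2}$. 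The essential point is the jump identity $\hat J_0(k)=\sigma_1\hat J_0(-k)\sigma_1$, i.e. hypothesis \eqref{sym_J_2}: using $\mathfrak R(-k)=\mathfrak R^*(k)$ from \eqref{propR10} and the symmetries \eqref{symm_a} of $a$ and $b$ (whence $r(-k)=r^*(k)$, $G_1(-k)=G_1^*(k)$ and the analogous relations for $G,G_2,G_3,\tilde G,\tilde G_1$), one verifies case by case in \eqref{j0-yt} and on the cuts \eqref{sigma_j_ver}--\eqref{sigma_j_hor} that $\sigma_1$-conjugation carries each factorized jump at $-k$ into the corresponding jump at $k$; the residue parameters obey the compatibility \eqref{res-sym}. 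With these verified, the argument of Proposition~\ref{prop:sym} applies.

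The main obstacle is this jump identity \eqref{sym_J_2}: $\hat\Sigma$ is made of many arcs carrying different triangular, factorized jumps, and one must track the transformation of $a,b,r,\mathfrak R,\mathfrak R^*$ under $k\mapsto-k$ together with the orientation of each arc and of the branch cuts, checking in every case that $\sigma_1$-conjugation sends $\hat J_0(-k)$ to $\hat J_0(k)$. By contrast, the uniqueness half is a faithful transcription of Proposition~\ref{prop:uniqness}, the only new inputs being that the extra residue nodes in $D_3,D_4$ and the branch-cut jumps \eqref{sigma_j_ver}--\eqref{sigma_j_hor} do not obstruct the analytic continuation of $P$ nor its pole structure at $0$.
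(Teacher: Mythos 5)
Your proposal is correct and takes essentially the same route as the paper: the paper's own proof likewise reduces everything to the jump symmetry $\hat J(y,t,k)=\left(\begin{smallmatrix}0&1\\1&0\end{smallmatrix}\right)\hat J(y,t,-k)\left(\begin{smallmatrix}0&1\\1&0\end{smallmatrix}\right)$, obtained from \eqref{propR10} together with the symmetries \eqref{symm_a} of $a$ and $b$, and then deduces uniqueness from Proposition \ref{prop:uniqness} and the reflection relation from Proposition \ref{prop:sym}. Your extra detail (re-running the Liouville argument in the $\delta=0$ case and the case-by-case check of \eqref{j0-yt}--\eqref{sigma_j_hor}) merely unpacks what the paper compresses into citations of those two propositions.
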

\begin{proof}
    The symmetry property \eqref{propR10} together with the related properties of 
    $a(k)$ and $b(k)$ provides that 
    $\hat J(y,t,k) = \left(
    \begin{smallmatrix}
        0 & 1 \\ 1 & 0
    \end{smallmatrix}\right) \hat J(y,t,-k)
    \left(\begin{smallmatrix}
        0 & 1 \\ 1 & 0
    \end{smallmatrix}\right)$. Consequently, 
    (i) the uniqueness of the solution 
    of the RH problem follows from Proposition \ref{prop:uniqness};
    (ii) the symmetry of $\hat M(y,t,k)$ follows from the uniqueness of the solution 
    of the RH problem, see Proposition \ref{prop:sym}.
\end{proof}

\subsection{Verification of initial conditions}
The idea of verification of initial conditions (Step 2) is as follows: transform the master RH problem
evaluated at $t=0$ to a RH problem  that characterizes the initial condition $\hat u_0(y)$
(or $\hat m_0(y)$) is such a way that does not affect the procedure of ``extracting''
$\hat m$ form the solution of the RH problem.

We begin with the formulation of the RH problem associated with the Lax equation 
\eqref{Lax_y_y} at $t=0$, i.e., with the coefficients expressed in terms of $\hat m_0(y)$.
This RHP can be derived from the direct problem on the whole line with the potential supported on $[0,L]$. Similarly to the problem on the interval, its construction follows
the analysis in the $x,t$ variables, see Section \ref{sec:RH-line-xt},
taking into account that our while line problem is such that 
$\hat m_0(y) $ is continued by $0$ for $y\not\in [0,\theta]$
and thus $\Phi_-$ is actually  $\Phi_2$ from Section \ref{sec:RH-y-int}
(similarly for $\Phi_+$ and $\Phi_3$). Then the pre-RH problem from Proposition 
\ref{prop:RHP-xt} transforms to the following RH problem,
which is parametrized by $y$ and provides the solution of the inverse problem, 
i.e., the reconstruction of $m_0(y)$ from the spectral functions $a(k)$ and $b(k)$
and the discrete parameters $\{\nu_j,c_j\}_1^N$, where $c_j=\frac{1}{b(i\nu_j)\dot a(i\nu_j)}=-\frac{b^*(i\nu_j)}{\dot a(i\nu_j)}$ \cite{BS08}.

\begin{proposition}\label{prop:M-y}
    Let $a(k)$, $b(k)$ ($k\in\mathbb R)$, and $\{\nu_j,c_j\}_1^N$ ($0<\nu_j<1/2$, 
    $c_j\in \ii\mathbb R$) be the spectral
    data determined by $u_0(x)$, $x\in[0,L]$
    through \eqref{sr} and \eqref{a-b}, where \eqref{sr} is considered at $t=0$.
    Assume that $\lim_{k\to 0} k a(k)\ne 0$ and $\lim_{k\to 0} k b(k)\ne 0$ (generic case).
   Let $m_0(x)=u_0(x)-u_{0xx}(x)$ and $\hat m_0(y)=m_0(x(y))$, where 
   $y(x)=\int_0^x \sqrt{m_0(\xi)+1}\dd\xi$. Then $\hat m_0(y)$ 
   can be given in terms of the solution of the following RH problem: for all 
   $y\in [0,\theta]$, 
   find a piece-wise (w.r.t. $\mathbb R$) meromorphic, $2\times 2$-valued 
   function $M^{(y)}(y,k)$ such that
   \begin{enumerate}
      \item The limiting values of $M^{(y)}(y,k)$ as $k$ approaches $\mathbb R$ from the upper 
    and lover half-planes ($M_+$ and $M_-$ respectively) are related by the jump 
    condition
    \begin{equation}\label{M-y-jump}
        M^{(y)}_+(y,k) = M^{(y)}_-(y,k)J^{(y)}_{\mathbb R}(y,k)\ \ \text{with}\ 
       J^{(y)}_{\mathbb R}(y,k)= e^{-iky\sigma_3}J_{0 \mathbb R}(k)  
       e^{iky\sigma_3},\ \ 
               k\in \mathbb R,
    \end{equation}
    where $J_{0 \mathbb R}(k) $ is given by \eqref{j0};
  \item 
  as $k\to 0$,
  \begin{equation}\label{M-y-k0}
\begin{aligned}
  M^{(y)}_+(y,k) & = \frac{i \alpha^{(y)}(y)}{k}
\begin{pmatrix}
    0 & 1 \\ 0 & -1
\end{pmatrix}+O(1),\\
M^{(y)}_-(y,k) & = -\frac{i \alpha^{(y)}(y)}{k}
\begin{pmatrix}
   1 & 0 \\ -1 & 0
\end{pmatrix}+O(1)
\end{aligned}
\end{equation}
with some (unspecified) $\alpha^{(y)}(y)>0$;
\item $M^{(y)}(y,k)\to I$ as $k\to\infty$;
\item 
$M^{(y)(1)}$ has simple poles at 
$k_j=i\nu_j$, $j=1,\dots,N$,
$M^{(y)(2)}$ has simple poles at $k=-i\nu_j$, and the residue conditions hold:
\begin{equation}\label{M-y-res}
\begin{aligned}
    \Res_{k=i\nu_j} M^{(y)(1)}(y,k) & = c_j e^{-2\nu_j y} M^{(y)(2)}(y,i\nu_j),\\
    \Res_{k=-i\nu_j} M^{(y)(2)}(y,k) & = \bar c_j e^{-2\nu_j y} M^{(y)(1)}(y,-i\nu_j);
\end{aligned}
\end{equation}
\item 
\begin{equation}\label{y-struc}
    M^{(y)}\left(y,\frac{i}{2}\right)=\frac{1}{2}\begin{pmatrix}
	\hat q_0(y)+\dfrac{1}{\hat q_0(y)} & \hat q_0(y)-\dfrac{1}{\hat q_0(y)} \\[5mm]
    \hat q_0(y)-\dfrac{1}{\hat q_0(y)} & \hat q_0(y)+\dfrac{1}{\hat q_0(y)}
	\end{pmatrix} \begin{pmatrix}
	\hat f_0(y) & 0 \\ 0 & \hat f_0^{-1}(y)	\end{pmatrix},
	\end{equation}
    with some (unspecified) $ \hat q_0(y)>0$ and $\hat f_0(y)>0$.
       \end{enumerate}
       Namely, $m_0(y)=\hat q_0^4(y)+1$.
\end{proposition}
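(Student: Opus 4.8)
The plan is to recognize that the RH problem for $M^{(y)}(y,k)$ is precisely the whole-line problem of Proposition \ref{prop:RHP-xt} \emph{evaluated at $t=0$} and re-parametrized by $y$. Indeed, at $t=0$ the definition \eqref{p} collapses to $p(x,0,k)=\int_0^x\sqrt{m_0(\xi)+1}\dd\xi=y(x)$, so that the only dependence of the $(x,t)$ data on the spatial variable enters through $y$. Accordingly I would take as candidate $M^{(y)}(y,k):=M(x(y),0,k)$, where $M$ is built from the Jost solutions of \eqref{lax} at $t=0$ for the potential $m_0$ continued by $0$ outside $[0,L]$ (so that $\Phi_-=\Phi_2$, $\Phi_+=\Phi_3$ in the notation of Section \ref{sec:RH-y-int}) and $x(y)$ is the inverse of $y(x)$.

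With this identification each condition follows by transcribing the corresponding item of Proposition \ref{prop:RHP-xt}. The jump \eqref{M-y-jump} is the scattering relation \eqref{sr} rewritten through $\Psi$, with $J_{0\mathbb R}(k)$ as in \eqref{j0}, the conjugating factor $\ee^{-\ii ky\sigma_3}$ coming from $p(x,0,k)=y$; note that $J_{0\mathbb R}$ depends only on $r=\bar b/a$ and is therefore independent of $y$. The behaviour \eqref{M-y-k0} at $k=0$ is item (3) of Proposition \ref{prop:RHP-xt} specialized to the generic case $\delta=0$ (which is exactly $\lim_{k\to0}ka(k)\neq0$, $\lim_{k\to0}kb(k)\neq0$), combined with the symmetry \eqref{M-x-sym} to split the upper and lower expansions. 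Normalization (3) is item (4); the residue conditions \eqref{M-y-res} are \eqref{M-x-res} with $p(x,0,\ii\nu_j)=y$, so that $\ee^{-2\nu_j p}$ becomes $\ee^{-2\nu_j y}$ and $c_j=b_j/\dot a(k_j)\in\ii\mathbb R$; and the structural condition \eqref{y-struc} is \eqref{struc} read at $t=0$.

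For uniqueness I would argue as in Proposition \ref{prop:uniqness}. First, $\det M^{(y)}\equiv1$ follows by the same Liouville argument used for $\det\hat M=1$: the determinant has no jump and no poles, tends to $1$ at $\infty$, is at worst simple at $0$, and the structural condition \eqref{y-struc} forces $\det M^{(y)}(y,\ii/2)=1$. Given two solutions $M_1^{(y)},M_2^{(y)}$, the quotient $P:=M_1^{(y)}(M_2^{(y)})^{-1}$ then has no jump across $\mathbb R$, no poles at $\pm\ii\nu_j$, tends to $I$ at $\infty$, and has at worst a simple pole at $0$ whose residue, by \eqref{M-y-k0} and $\det=1$, takes the nilpotent form of \eqref{Pexp}; the structural condition at $\ii/2$ (with $\hat q_0>0$, $\hat f_0>0$) then forces $P\equiv I$. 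Solvability of the problem is the inverse-scattering reconstruction of CH on the line and is available, under the generic hypotheses, from \cite{BS08}. The potential is finally recovered from the structural condition exactly as in the remark following Proposition \ref{prop:RHP-xt}: reading off $\hat q_0(y)$ from \eqref{y-struc} yields $\hat m_0(y)=\hat q_0^4(y)-1$.

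I expect the main obstacle to be the singularity at $k=0$: one must verify that, in the generic case, the $1/k$ term in \eqref{M-y-k0} is consistent with $\det M^{(y)}\equiv1$ and obstructs neither the Liouville argument for uniqueness nor the reconstruction, since $M^{(y)}$ is genuinely singular (rather than bounded) there. Everything else is a faithful $t=0$ slice of Proposition \ref{prop:RHP-xt} transported to the $y$ variable.
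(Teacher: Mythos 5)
Your proposal is correct and takes essentially the same route as the paper, which obtains this proposition exactly by specializing the whole-line pre-RH problem of Proposition \ref{prop:RHP-xt} to $t=0$ (so that $p(x,0,k)=y(x)$) for the momentum $m_0$ continued by zero outside $[0,L]$ (identifying $\Phi_-$ with $\Phi_2$ and $\Phi_+$ with $\Phi_3$), and defers the solvability and reconstruction of the inverse problem to \cite{BS08}; your added Liouville-type uniqueness argument and the use of the symmetry \eqref{M-x-sym} to get the $\mathbb{C}_-$ expansion at $k=0$ are consistent refinements of what the paper leaves implicit. Note only that your reconstruction formula $\hat m_0(y)=\hat q_0^4(y)-1$ is the correct one (consistent with Proposition \ref{reduce} and the remark following Proposition \ref{prop:RHP-xt}); the statement's ``$m_0(y)=\hat q_0^4(y)+1$'' is a sign typo.
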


\begin{proposition}\label{prop:con_pr}
    The solutions $\hat M(y,0,k)$ and $M^{(y)}(y,k)$ of the RH problems 
    \eqref{hatM-jump}-\eqref{sigma_j_hor} and \eqref{M-y-jump}-\eqref{y-struc} respectively
    can be related as follows:
    \begin{equation}
        \label{y-ini}
         M^{(y)}(y,k)=\hat{M} (y,0,k)\begin{cases}
\begin{pmatrix}
   1&-G_1(k)\eul^{-2iky}\\
 0&1  
\end{pmatrix},\quad 
k\in\{ \Im k > 0, |k|>\frac{1}{2},|k-\frac{\ii}{2}|>\epsilon \},\\

\begin{pmatrix}
   1&-\tilde G_1(k)\eul^{-2iky}\\
 0&1  
\end{pmatrix}\begin{pmatrix}
    \eul^{\ii k(L-\theta)}&0\\0& \eul^{-\ii k(L-\theta)}
\end{pmatrix},
\quad k\in\{  |k|>\frac{1}{2},|k-\frac{\ii}{2}|<\epsilon \},\\

\begin{pmatrix}
   1&0\\
 -G(k)\eul^{2iky}&1  
\end{pmatrix}
,\quad k\in\{\Im k > 0,  |k|<\frac{1}{2}, |k-\frac{\ii}{2}|>\epsilon\},\\

\begin{pmatrix}
   1&0\\
 -\tilde G(k)\eul^{2iky}&1  
\end{pmatrix}\begin{pmatrix}
    \eul^{\ii k(L-\theta)}&0\\0 &\eul^{-\ii k(L-\theta)}
\end{pmatrix}
,\quad k\in\{ |k|<\frac{1}{2}, |k-\frac{\ii}{2}|<\epsilon\},\\

\begin{pmatrix}
   1&-G^*(k)\eul^{-2iky}\\
 0&1  
\end{pmatrix}
,\quad k\in\{ \Im k < 0, |k|<\frac{1}{2},  |k+\frac{\ii}{2}|>\epsilon\},\\

\begin{pmatrix}
   1&-\tilde G^*(k)\eul^{-2iky}\\
 0&1  
\end{pmatrix}\begin{pmatrix}
    \eul^{\ii k(L-\theta)}&0\\0 &\eul^{-\ii k(L-\theta)}
\end{pmatrix}
,\quad k\in\{  |k|<\frac{1}{2}, |k+\frac{\ii}{2}|<\epsilon\},\\

\begin{pmatrix}
   1&0\\
 -G_1^*(k)\eul^{2iky}&1  
\end{pmatrix}
,\quad k\in\{ \Im k < 0, |k|>\frac{1}{2}, |k+\frac{\ii}{2}|>\epsilon\},\\

\begin{pmatrix}
   1&0\\
 -\tilde G_1^*(k)\eul^{2iky}&1  
\end{pmatrix}\begin{pmatrix}
    \eul^{\ii k(L-\theta)}&0\\0 &\eul^{-\ii k(L-\theta)}
\end{pmatrix}
,\quad k\in\{ |k|>\frac{1}{2}, |k+\frac{\ii}{2}|<\epsilon\}.
\end{cases}
\end{equation}
Particularly, it follows that $\hat q(y,0,k)=\hat q_0(y)$ and thus $\hat m(y,0,k)=\hat m_0(y)$.
\end{proposition}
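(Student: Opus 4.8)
The plan is to establish the relation \eqref{y-ini} by an \emph{undressing} argument and then to read off the equality $\hat q(y,0)=\hat q_0(y)$ by evaluating at $k=\tfrac{\ii}{2}$. Concretely, I would set $N(y,k):=\hat M(y,0,k)D(y,k)$, where $D(y,k)$ is the piecewise right multiplier appearing on the right-hand side of \eqref{y-ini}, and show that, for each fixed $y\in[0,\theta]$, $N$ solves the RH problem \eqref{M-y-jump}--\eqref{y-struc} characterizing $M^{(y)}$. Since that problem has a unique solution (by the argument of Proposition \ref{prop:uniqness}, its singularity condition at $0$ and structural condition at $\tfrac{\ii}{2}$ being of the same type), this gives $N\equiv M^{(y)}$, i.e.\ \eqref{y-ini}. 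Throughout one uses that at $t=0$ one has $\hat p(y,0,k)=y$, so each exponential $\eul^{\pm 2\ii k\hat p}$ reduces to $\eul^{\pm 2\ii ky}$, matching the multipliers in \eqref{y-ini}.

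The core of the verification is the jump bookkeeping. On the circle $\D T$, on the circles $\gamma$ around $\pm\tfrac{\ii}{2}$, and on the vertical cuts $\Sigma_j$, I would check by direct triangular-factor computation, using \eqref{j0-yt} and \eqref{sigma_j_ver} together with the relations \eqref{propR8}--\eqref{propR9} of Proposition \ref{prop:R}, that the jump of $D$ cancels that of $\hat M$, so that $N$ is analytic across these contours. On $\D R$ (and on the horizontal cuts via \eqref{sigma_j_hor}) the outer triangular factors of $\hat J_0$ are precisely absorbed by $D_\pm$: after conjugating out $\eul^{\pm\ii ky\sigma_3}$, the required identity $N_+=N_-J^{(y)}_{\D R}$ reduces, for $|k|>\tfrac12$, to $\hat J_0\bigl(\begin{smallmatrix}1&-G_1\\0&1\end{smallmatrix}\bigr)=\bigl(\begin{smallmatrix}1&0\\-G_1^*&1\end{smallmatrix}\bigr)J_{0\D R}$ (and its analogue with $G$ for $|k|<\tfrac12$), which holds because $\bigl(\begin{smallmatrix}1&G_1\\0&1\end{smallmatrix}\bigr)\bigl(\begin{smallmatrix}1&-G_1\\0&1\end{smallmatrix}\bigr)=I$, leaving exactly $J_{0\D R}=\bigl(\begin{smallmatrix}1-|r|^2&r^*\\-r&1\end{smallmatrix}\bigr)$.

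It then remains to match the normalization, the behaviour at $k=0$, and the residues. For the normalization I would use the form $G_1=\eul^{2\ii k\theta}a\mathfrak R/(a-b\mathfrak R^*)$ from \eqref{G-G1}, so that in the unbounded domains the off-diagonal entry equals $\eul^{2\ii k(\theta-y)}a\mathfrak R/(a-b\mathfrak R^*)$ and decays as $k\to\infty$ in $\D C_+$ precisely because $y\in[0,\theta]$ and $\mathfrak R(\infty)=0$; this is where the restriction $y\in[0,\theta]$ enters, and it yields $N\to I$. At $k=0$ (generic case) one checks that the nilpotent leading term \eqref{hatM_at_0} of $\hat M$ survives in the second column while the first column stays $\ord(1)$, reproducing \eqref{M-y-k0} with $\alpha^{(y)}=\hat c$. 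For the residues the key mechanism is a \emph{relocation of poles}: the factor $a^2\,\Res\,\mathfrak R$ built into the residue data \eqref{res_hatM_R} is exactly what annihilates the pole of $\hat M$ at each $\mu_j$ against the corresponding pole of $G_1$ (resp.\ $G$) in $D$, while the term $b^*/a$ in $G$ creates new simple poles of $N$ at the zeros $\ii\nu_j$ of $a$ with residue $-\tfrac{b^*(\ii\nu_j)}{\dot a(\ii\nu_j)}\eul^{-2\nu_j y}=c_j\eul^{-2\nu_j y}$, which is \eqref{M-y-res}.

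Finally, with \eqref{y-ini} in hand, I evaluate at $k=\tfrac{\ii}{2}$. By \eqref{spec_R} one has $\tilde{\mathfrak R}(\tfrac{\ii}{2})=0$ and by \eqref{tilde-a-b} $\tilde b(\tfrac{\ii}{2})=0$; moreover $\tilde{\mathfrak R}^*(\tfrac{\ii}{2})=0$, since by \eqref{propR3} it is the reciprocal of the branch of $\mathfrak R$ that has a pole at $\tfrac{\ii}{2}$ (where $\tilde b^*$ vanishes in the denominator of \eqref{calR-form}). Hence $\tilde G_1(\tfrac{\ii}{2})=\tilde G(\tfrac{\ii}{2})=0$, so $D(y,\tfrac{\ii}{2})=\eul^{\frac{\theta-L}{2}\sigma_3}$ is diagonal and $M^{(y)}(y,\tfrac{\ii}{2})=\hat M(y,0,\tfrac{\ii}{2})\eul^{\frac{\theta-L}{2}\sigma_3}$. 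Comparing the structural forms \eqref{hatM_at_i2_} and \eqref{y-struc}, the diagonal factor merely rescales $\hat f$, whereas the ratio of the $(1,1)$ and $(2,1)$ entries is invariant; since $q\mapsto(q+q^{-1})/(q-q^{-1})$ is injective for $q>0$, this forces $\hat q(y,0)=\hat q_0(y)$, and therefore $\hat m(y,0)=\hat q(y,0)^4-1=\hat q_0(y)^4-1=\hat m_0(y)$. I expect the main obstacle to be the systematic jump-cancellation across all pieces of $\hat\Sigma$ combined with this pole-relocation from $\{\mu_j\}$ to $\{\ii\nu_j\}$, all of which rest on the algebraic identities of Proposition \ref{prop:R}.
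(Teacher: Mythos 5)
Your proposal is correct and follows the paper's own route essentially step for step: you right-multiply $\hat M(y,0,k)$ by the piecewise factor, verify that the product satisfies every condition of the RH problem of Proposition \ref{prop:M-y} (jump absorption on all pieces of $\hat\Sigma$, normalization via $\mathfrak{R}(\infty)=0$, the pole relocation from $\{\mu_j\}$ to the zeros $\ii\nu_j$ of $a$ with residues $c_j\eul^{-2\nu_j y}$, and $\tilde G(\tfrac{\ii}{2})=\tilde G_1(\tfrac{\ii}{2})=0$ so that only the diagonal factor, absorbed into $\hat f$, survives at $k=\tfrac{\ii}{2}$), and then conclude by uniqueness, reading off $\hat q(y,0)=\hat q_0(y)$. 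The paper spells out two details you only gesture at --- the $k=0$ condition requires $G(k)=\ord(k)$, which it derives from $b^*(0)/a(0)=1$ and $\mathfrak{R}^*(0)=-1$ in the generic case $\rho\neq 0$, and the possible coincidence $\bar\mu_j=\ii\nu_j$, where the cancelled and created residues superpose to yield \eqref{M-y-res} --- but both checks go through exactly as your scheme predicts.
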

\begin{proof}
    The proof consists in the demonstration that the r.h.s. of \eqref{y-ini}
    meets all requirements of the RH problem for  $M^{(y)}(y,k)$ 
    in Proposition \ref{prop:M-y}.

    \medskip

\noindent (1) The jump conditions match by construction.

\medskip

\noindent (2)
In the generic case we have $\rho\ne 0$ in \eqref{ab-0}, which implies that
(i) $\frac{b^*}{a}(0)=1$ and (ii) ${\mathfrak R}^*(0)=-1$ (follows from \eqref{calR}).
Consequently, $G(0)=0$, and the conditions at $k=0$ match.

\medskip

\noindent (3) Noticing that $G_1(k)\to 0$ as $k\to\infty$ in ${\mathbb C}_+$
since 
 $b(k)\to 0$ in ${\mathbb C}_+$ and $\mathfrak R(\infty)=0$, and that $\eul^{-2iky}$ is bounded in ${\mathbb C}_+$ for $y\ge 0$, the normalization conditions match.

\medskip

\noindent (4) The structures at $k=\frac{\ii}{2}$ match because 
\[
\tilde b\left(\frac{i}{2}\right) = \tilde b^*\left(\frac{i}{2}\right)
= \tilde{\mathfrak R}\left(\frac{i}{2}\right) =
\tilde{\mathfrak R}^*\left(\frac{i}{2}\right) =0,
\]
which implies that $\tilde G\left(\frac{i}{2}\right)=\tilde G_1\left(\frac{i}{2}\right)=0$.

\medskip

\noindent (5)
Finally, let's check that the residue condition match. 

Let's denote by $\{\mu_j\}$ the poles of $\mathfrak R$ in $D_1=\{k: \Im k>0, |k|>\frac{1}{2}, |k-\frac{i}{2}|>\varepsilon\}$. For $k\in D_1$ we have
that 
$M^{(y)}(y,k)= \hat M (y,0,k)
\begin{pmatrix}
   1&-G_1(k)\eul^{-2iky}\\
 0&1  
\end{pmatrix}$, particularly, that the r.h.s. above is not singular at $\mu_j$.
Indeed,
\[
\hat M (y,0,k)
\begin{pmatrix}
   1&-G_1(k)\eul^{-2iky}\\
 0&1  
\end{pmatrix}=(\hat M^{(1)} (y,0,k) \ \ \  -\hat M^{(1)} (y,0,k)G_1(k)
\eul^{-2iky}+\hat M^{(2)} (y,0,k)).
\]
Taking into account that  $\Res_{\mu_j}G_1 = a^2(\mu_j)\Res \mathfrak R $ (see \eqref{G-G1}) and the residue condition \eqref{res_hatM-a},
the singularity of the second column cancels.

Now consider $D_2=\{k: \Im k>0, |k|<\frac{1}{2}, |k-\frac{i}{2}|>\varepsilon\}$,
where we want to have that $M^{(y)}(y,k)= \hat M (y,0,k)
\begin{pmatrix}
   1& 0 \\ -G(k)\eul^{2iky} & 1
\end{pmatrix}$.
Here we have two possibilities: (i) either some zero $\bar \mu_j$ of ${\mathfrak R }^*$
coincides with a zero $\ii \nu_j$ or (ii) they are all different.
In the latter case,  we want to have
that 
$\hat M (y,0,k)
\begin{pmatrix}
   1& 0 \\ -G(k)\eul^{2iky} & 1
\end{pmatrix}$ is not singular at $\bar \mu_j$.
Indeed,
\[
\hat M (y,0,k)
\begin{pmatrix}
   1& 0 \\ -G(k)\eul^{2iky} & 1
\end{pmatrix} = 
(\hat M^{(1)} (y,0,k)-\hat M^{(2)}(y,0,k)G(k)\eul^{2iky} \ \ \ 
\hat M^{(2)} (y,0,k))
\]
Taking into account that  $\Res_{\bar\mu_j}G = e^{2\nu_j\theta}\Res_{\bar\mu_j} {\mathfrak R}^* $ (see \eqref{G-G1}) and the residue condition \eqref{res_hatM-b},
the singularity of the first column cancels.

On the other hand, at $k=\ii\nu$ the first column is singular due to the singularity
of $G$, which, in this case, is $\Res_{\ii \nu_j}G = -\frac{b^*(\ii\nu_j)}{\dot a(\ii\nu_j)}
= -c_j$, and thus the corresponding residue condition take the form in \eqref{M-y-res}.

Finally, consider the case with $\ii\nu_j=\bar\mu_j$ for some $j$. Then 
\[
\Res_{\ii \nu_j}G = e^{2\nu_j\theta}\Res_{\ii \nu_j}{\mathfrak R}^*-\frac{b^*(\ii\nu_j)}{\dot a(\ii\nu_j)} = e^{2\nu_j\theta}\Res_{\ii \nu_j}{\mathfrak R}^* - c_j
\]
and thus, again taking into account the residue condition \eqref{res_hatM-b},
the residue of the first column becomes $c_j\ee^{-2\nu_j y}\hat M^{(2)}(y,0,\ii\nu_j)$,
which is consistent with the residue conditions \eqref{M-y-res}.

The singularities in $D_3=\bar D_2$ and $D_4=\bar D_1$ can be treated in a similar way.
\end{proof}

\begin{remark}\label{rem:const_t_0}
    Notice that \eqref{y-ini} allows us to explicitly construct the solution of RH problem \eqref{hatM-jump}-\eqref{sigma_j_hor} $\hat M(y,t,k)$ at $t=0$ from the solution  of RH problem \eqref{M-y-jump}-\eqref{y-struc} $M^{(y)}(y,k)$, whose existence is guaranteed by construction. 
\end{remark}

\subsection{Verification of periodicity}
The idea of verification of periodicity (Step 3) is as follows: transform 
two RH problems, parametrized by $t$, which are 
the master RH problem for respectively $y=0$ and for $y=\theta$,
to two RH problems having the same data.

For this purpose we introduce 
\begin{equation*}
    P(t,k)=\begin{cases}
\begin{pmatrix}
    a & 0 \\ 0 & a^{-1}
\end{pmatrix}, & 
k\in\{ \Im k > 0, |k|>\frac{1}{2}, |k-\frac{\ii}{2}| >\epsilon \},\\
\begin{pmatrix}
    \tilde a & 0 \\ 0 & {\tilde a}^{-1}
\end{pmatrix}, & 
k\in\{ \Im k > 0, |k|>\frac{1}{2}, |k-\frac{\ii}{2}| <\epsilon \},\\
\begin{pmatrix}
    {a^*}^{-1} & 0 \\ 0 & a^*
\end{pmatrix}, & 
k\in\{ \Im k < 0, |k|>\frac{1}{2}, |k+\frac{\ii}{2}| >\epsilon \},\\
\begin{pmatrix}
    {\tilde a^*}^{-1} & 0 \\ 0 & \tilde a^*
\end{pmatrix}, & 
k\in\{ \Im k < 0, |k|>\frac{1}{2}, |k+\frac{\ii}{2}| <\epsilon \},\\
\begin{pmatrix}
   a-b\tilde{\mathfrak R}^*&-b\eul^{-\frac{\ii k t}{\lambda}}\eul^{2\ii k(L-\theta)}\\
 0&a^*-b^* \tilde{\mathfrak R}  
\end{pmatrix} \ee^{\ii k(L-\theta)\sigma_3}, &
 k\in\{ \Im k > 0, |k|<\frac{1}{2},|k-\frac{\ii}{2}| <\epsilon\},\\

\begin{pmatrix}
   a-b\mathfrak R^*&-b\eul^{-\frac{\ii k t}{\lambda}}\\
 0&a^*-b^*\mathfrak R  
\end{pmatrix}, & 
 k\in\{ \Im k > 0, |k|<\frac{1}{2},|k-\frac{\ii}{2}| >\epsilon\},\\

\begin{pmatrix}
   a-b\mathfrak R^*&0\\
 -b^*\eul^{\frac{\ii k t}{\lambda}}&a^*-b^*\mathfrak R   
\end{pmatrix}, & 
 k\in\{ \Im k < 0, |k|<\frac{1}{2},|k+\frac{\ii}{2}| >\epsilon\},\\

\begin{pmatrix}
   a-b\tilde{\mathfrak R}^*&0\\
 -b^*\eul^{\frac{\ii k t}{\lambda}}\eul^{-2\ii k(L-\theta)}&a^*-b^*\tilde{\mathfrak R}   
\end{pmatrix} \ee^{\ii k(L-\theta)\sigma_3}, &
k\in\{ \Im k < 0, |k|<\frac{1}{2},|k+\frac{\ii}{2}| <\epsilon\}

\end{cases}
\end{equation*}
and
\begin{equation*}
    \hat P(t,k)=\begin{cases}
    \begin{pmatrix}
    a & 0 \\ 0 & a^{-1}
\end{pmatrix}
\begin{pmatrix}
   a^*-b^*\mathfrak R&0\\
 b^*\eul^{2\ii k\theta}\eul^{\frac{\ii k t}{\lambda}}&a-b\mathfrak R^*  
\end{pmatrix}, & 
 k\in\{ \Im k > 0, |k|>\frac{1}{2}, |k-\frac{\ii}{2}| >\epsilon\},\\
\begin{pmatrix}
    \tilde a & 0 \\ 0 & {\tilde a}^{-1}
\end{pmatrix}
\begin{pmatrix}
   a^*-b^*\tilde {\mathfrak R}&0\\
 b^*\eul^{2\ii k\theta}\eul^{\frac{\ii k t}{\lambda}}&a-b\tilde {\mathfrak R}^*  
\end{pmatrix}, & 
 k\in\{ \Im k > 0, |k|>\frac{1}{2}, |k-\frac{\ii}{2}| <\epsilon\},\\
\begin{pmatrix}
    {\tilde a^*}^{-1} & 0 \\ 0 & \tilde a^*
\end{pmatrix}
\begin{pmatrix}
   a^*-b^*\tilde {\mathfrak R}& b\eul^{-2\ii k\theta}\eul^{-\frac{\ii k t}{\lambda}}\\
0&a-b\tilde {\mathfrak R}^*  
\end{pmatrix}, & 
k\in\{ \Im k < 0, |k|>\frac{1}{2},|k+\frac{\ii}{2}| <\epsilon\},\\
\begin{pmatrix}
    { a^*}^{-1} & 0 \\ 0 &  a^*
\end{pmatrix}
\begin{pmatrix}
   a^*-b^*\mathfrak R& b\eul^{-2\ii k\theta}\eul^{-\frac{\ii k t}{\lambda}}\\
0&a-b\mathfrak R^*  
\end{pmatrix}, & 
k\in\{ \Im k < 0, |k|>\frac{1}{2},|k+\frac{\ii}{2}| >\epsilon\},\\
 \ee^{\ii k(L-\theta)\sigma_3}, & 
k\in\{ |k|<\frac{1}{2}\}\cap\left(\{|k-\frac{\ii}{2}|<\epsilon\}\cup\{|k+\frac{\ii}{2}|<\epsilon\}\right)
\\
I, & 
 k\in\{ |k|<\frac{1}{2}\}\setminus \left(\{|k-\frac{\ii}{2}|<\epsilon\}\cup\{|k+\frac{\ii}{2}|<\epsilon\}\right)
\end{cases}
\end{equation*}
and define $M^{(t)}(t,k):=\hat{M}(0,t,k)P(t,k)$ and 
$\hat M^{(t)}(t,k):=\hat{M}(\theta,t,k)\hat P(t,k)$. 
Our goal is to show that $M^{(t)}$ and $\hat M^{(t)}$ satisfy the same
conditions characterizing the respective RH problem.

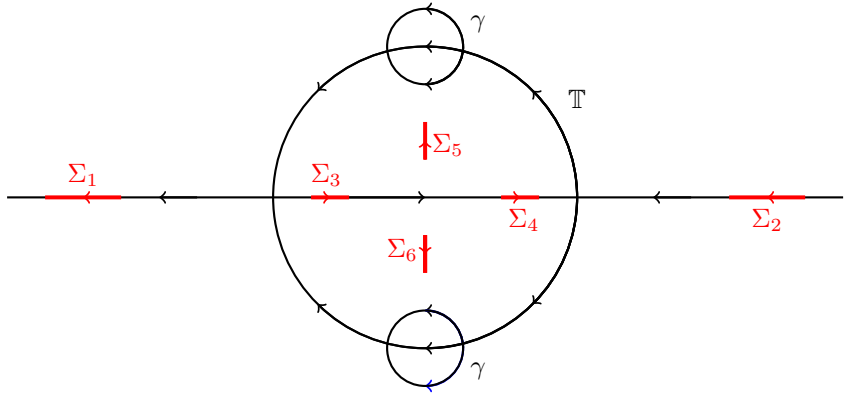
\begin{figure}
    \centering
    \begin{tikzpicture}
\draw[thick] (0,0) circle (2);

\draw[thick] (-5.5,0) -- (5.5,0);
 
\draw[->, thick] (2,0) arc[start angle=0, end angle=90, radius=2];
 
\draw[->, thick] (2,0) arc[start angle=360, end angle=270, radius=2];
    
\draw[<-, thick] (3,0) -- (3.5,0);
 
\draw[<-, thick] (-3.5,0) -- (-3,0);
  
\draw[->, thick] (-1,0) -- (0,0);
    
\draw[line width=1.5pt, red] (0, -0.5) -- (0, -1);

\draw[thick, red, ->] (0, -0.5) -- (0, -0.75);
             
\draw[line width=1.5pt, red] (0, 0.5) -- (0, 1);
        
\draw[thick, red, ->] (0, 0.5) -- (0, 0.75);

\draw[line width=1.5pt, red] (0, -0.5) -- (0, -1);

\draw[line width=1.5pt, red] (-5, 0) -- (-4, 0);
        
\draw[thick, red, <-] (-4.5, 0) -- (-4, 0);

\draw[line width=1.5pt, red] (4, 0) -- (5, 0);
        
\draw[thick, red, <-] (4.5, 0) -- (5, 0);

\draw[line width=1.5pt, red] (-1.5, 0) -- (-1, 0);
        
\draw[thick, red, ->] (-1.5, 0) -- (-1.25, 0);

\draw[line width=1.5pt, red] (1, 0) -- (1.5, 0);
\draw[thick, red, ->] (1, 0) -- (1.25, 0);

\draw[->, thick] (0.5,2) arc[start angle=0, end angle=-90, radius=0.5];

\draw[<-, thick] (0,2.5) arc[start angle=90, end angle=-90, radius=0.5];

\draw[->, thick] (0.5,-2) arc[start angle=0, end angle=90, radius=0.5];

\draw[->, thick, blue] (0,-1.5) arc[start angle=90, end angle=-90, radius=0.5];

\draw[thick] (0,2) circle (0.5);

\draw[thick] (0,-2) circle (0.5);

\draw[->, thick] (2,0) arc[start angle=0, end angle=45, radius=2];
    
\draw[->, thick] (2,0) arc[start angle=0, end angle=135, radius=2];
        
\draw[->, thick] (2,0) arc[start angle=0, end angle=-45, radius=2];

\draw[->, thick] (2,0) arc[start angle=0, end angle=-135, radius=2];

\node at (0.7, 2.3) {$\gamma$};

\node at (0.7, -2.3) {$\gamma$};

\node at (2, 1.3) {$\mathbb{T}$};

\node at (-4.5, 0.3) {$\red{\Sigma_1}$};

\node at (4.5, -0.3) {$\red{\Sigma_2}$};

\node at (-1.3, 0.3) {$\red{\Sigma_3}$};

\node at (1.3, -0.3) {$\red{\Sigma_4}$};

\node at (0.3, 0.7) {$\red{\Sigma_5}$};

\node at (-0.3, -0.7) {$\red{\Sigma_6}$};

\end{tikzpicture}
    \caption{Contour for $M^{(t)}$ and $\hat M^{(t)}$}
    \label{fig:hatSigma_t}
\end{figure}

The following proposition establishes that $M{(t)}(t,k)$ and $\hat M{(t)}(t,k)$ satisfy the same jump condition.

\begin{proposition}\label{prop:jump}
Let $\Sigma$ be oriented as in Figure \ref{fig:hatSigma_t}, and define the jump matrices $J^{(t)}(t,k)$ and $\hat J^{(t)}(t,k)$ by the relations
\[
M_+^{(t)}(t,k) = M_-^{(t)}(t,k)J^{(t)}(t,k), \quad 
\hat M_+^{(t)}(t,k) =\hat M_-^{(t)}(t,k)\hat J^{(t)}(t,k),
\]
for $k \in \Sigma$. Then the two jump matrices coincide. Moreover, \[J^{(t)}(t,k) = \hat J^{(t)}(t,k)=\ee^{-ik \frac{t}{2\lambda}\sigma_3 } J^{(t)}_0(k) 
\ee^{ik \frac{t}{2\lambda}\sigma_3}\] with
\[
J^{(t)}_0(k)=\begin{cases}
    \begin{pmatrix}
    1-{\mathfrak R}^* {\mathfrak R} & -{\mathfrak R} \\
    {\mathfrak R}^* & 1
\end{pmatrix}, \quad k\in \{\Im k =0, k\notin\Sigma_j \}\cup \{\Im k >0, |k|=\frac{\ii}{2}, |k-\frac{\ii}{2}|<\epsilon\},\\
    \begin{pmatrix}
    1-{\mathfrak R_-}^* {\mathfrak R_+} & -{\mathfrak R_+} \\
    {\mathfrak R_-}^* & 1
\end{pmatrix}, \quad k\in \{\Im k >0,\Re k=0, k\in\Sigma_j \},\\
\begin{pmatrix}
		1  & 0 \\
		{{\mathfrak R}^*}_+(k)-{{\mathfrak R}^*}_-(k)  & 1
	\end{pmatrix}, \quad k\in \{\Re k=0\}\cup\Sigma_j,\\
\begin{pmatrix}
    1-{\mathfrak {\tilde R}}^* {\mathfrak {\tilde R}} & -{\mathfrak {\tilde R}} \\
    {\mathfrak {\tilde R}}^* & 1
\end{pmatrix}, \quad k\in \{\Im k >0, |k|=\frac{\ii}{2}, |k-\frac{\ii}{2}|<\epsilon\},\\
\begin{pmatrix}
    1& 0\\
   \mathfrak R^*(k)-\tilde{\mathfrak R}^*(k) & 1
\end{pmatrix},\quad k\in \{|k-\frac{\ii}{2}|=\epsilon,|k|<\frac{1}{2}\},\\
\begin{pmatrix}
    1 & \tilde{\mathfrak R}(k)-\mathfrak R(k)\\
    0& 1
\end{pmatrix},\quad k\in \{|k-\frac{\ii}{2}|=\epsilon,|k|>\frac{1}{2}\}.
\end{cases}
\]

\end{proposition}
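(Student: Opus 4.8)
The plan is to reduce the equality of the two jump matrices to a collection of $t$-independent algebraic identities, one for each arc of $\Sigma$, and then to verify these identities using the analytic properties of $\mathfrak R$ and $\tilde{\mathfrak R}$ collected in Proposition \ref{prop:R}. First I would observe that, since $M^{(t)}=\hat M(0,t,\cdot)P$ and $\hat M^{(t)}=\hat M(\theta,t,\cdot)\hat P$ are products of functions sectionally analytic across the common contour $\Sigma$ (the matrices $P,\hat P$ jump only across $\Sigma$, their branch-cut jumps being carried by $\mathfrak R,\mathfrak R^*$), the associated jumps are the conjugations
\begin{equation*}
J^{(t)}=P_-^{-1}\,\hat J(0,t,k)\,P_+,\qquad \hat J^{(t)}=\hat P_-^{-1}\,\hat J(\theta,t,k)\,\hat P_+,
\end{equation*}
where $\hat J(y,t,k)=\ee^{-\ii k\hat p\sigma_3}\hat J_0(k)\ee^{\ii k\hat p\sigma_3}$ with $\hat p(0,t,k)=\tfrac{t}{2\lambda}$ and $\hat p(\theta,t,k)=\theta+\tfrac{t}{2\lambda}$.

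Next I would factor out the common exponential conjugation $\ee^{-\ii k\frac{t}{2\lambda}\sigma_3}(\cdot)\,\ee^{\ii k\frac{t}{2\lambda}\sigma_3}$. The key point is that the $t$-dependence of $P$ and $\hat P$ enters only through the off-diagonal factors $\ee^{\pm\ii kt/\lambda}=\ee^{\pm2\ii k\frac{t}{2\lambda}}$, which this conjugation cancels exactly, while the factor $\ee^{-\ii k\theta\sigma_3}$ generated by $\hat p(\theta,t,k)$ is carried over onto $\hat J_0$ and then matched by the $\ee^{2\ii k\theta}$ entries of $\hat P$. Setting $\tilde P:=\ee^{\ii k\frac{t}{2\lambda}\sigma_3}P\,\ee^{-\ii k\frac{t}{2\lambda}\sigma_3}$ and $\check P:=\ee^{\ii k(\theta+\frac{t}{2\lambda})\sigma_3}\hat P\,\ee^{-\ii k\frac{t}{2\lambda}\sigma_3}$, both of which are then $t$-independent, I obtain
\begin{equation*}
J^{(t)}=\ee^{-\ii k\frac{t}{2\lambda}\sigma_3}\bigl(\tilde P_-^{-1}\hat J_0\,\tilde P_+\bigr)\ee^{\ii k\frac{t}{2\lambda}\sigma_3},\qquad \hat J^{(t)}=\ee^{-\ii k\frac{t}{2\lambda}\sigma_3}\bigl(\check P_-^{-1}\hat J_0\,\check P_+\bigr)\ee^{\ii k\frac{t}{2\lambda}\sigma_3}.
\end{equation*}
This already exhibits both jumps in the asserted form and reduces the proposition to the two $t$-independent identities
\begin{equation*}
\tilde P_-^{-1}\hat J_0\,\tilde P_+=J_0^{(t)}=\check P_-^{-1}\hat J_0\,\check P_+ .
\end{equation*}

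Finally I would verify these identities arc by arc, using on each piece of $\Sigma$ the explicit form of $\hat J_0$ from \eqref{j0-yt}, \eqref{sigma_j_ver}, \eqref{sigma_j_hor}, the expressions \eqref{G-G1}--\eqref{G3} for $G,G_1,\tilde G,\tilde G_1,G_2,G_3$ in terms of $a,b,\mathfrak R,\tilde{\mathfrak R}$, and the relations of Proposition \ref{prop:R}: the quadratic identity \eqref{propR6}, the boundary-value relations \eqref{propR8}, \eqref{propR9} on the cuts, the symmetry \eqref{propR10}, together with the determinant relation $aa^*-bb^*=1$ from \eqref{det_rel}. Keeping careful track of which domain lies on the $+$/$-$ side of each oriented arc, these collapse to the stated $J_0^{(t)}(k)$.

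I expect the main obstacle to be precisely this bookkeeping. The delicate arcs are those of $\D T$ and the small circles $\gamma$ around $\pm\tfrac{\ii}{2}$, where $P$ and $\hat P$ switch between the $\mathfrak R$- and $\tilde{\mathfrak R}$-blocks and where one must use $\tilde a=a\,\ee^{\ii k(L-\theta)}$, $\tilde b=b\,\ee^{\ii k(L-\theta)}$ (from \eqref{s-tilde-s}) to see the two blocks glue, and the branch cuts $\Sigma_j$, where the off-diagonal jumps of $\mathfrak R$ and $\mathfrak R^*$ must be matched against \eqref{sigma_j_ver} via \eqref{propR8}, \eqref{propR9}. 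Showing that the $P$-route (at $y=0$) and the $\hat P$-route (at $y=\theta$) land on the \emph{same} matrix is the crux, and it is here that the Global-Relation origin of \eqref{calR}, hence \eqref{propR6} and \eqref{propR9}, is essential.
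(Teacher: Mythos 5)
Your overall strategy coincides with the paper's: the paper likewise writes $J^{(t)}=P_-^{-1}\hat J(0,t,k)P_+$ and $\hat J^{(t)}=\hat P_-^{-1}\hat J(\theta,t,k)\hat P_+$ and reduces everything to the single $t$-independent identity \eqref{per_jump}, verified region by region using \eqref{calR}, \eqref{det_rel}, \eqref{s-tilde-s} and Proposition \ref{prop:R}. Your reduction step is correct, including the slightly delicate point you gloss quickly: for $\hat P$ the operation $\check P=\ee^{\ii k(\theta+\frac{t}{2\lambda})\sigma_3}\hat P\,\ee^{-\ii k\frac{t}{2\lambda}\sigma_3}$ is a two-sided multiplication rather than a conjugation, and one can check entrywise that it simultaneously removes the factors $\ee^{\pm\ii kt/\lambda}$ and converts $\ee^{2\ii k\theta}$ into $\ee^{\ii k\theta}$, leaving a $t$-independent matrix; this mirrors what the paper does implicitly when it strips the exponentials before \eqref{per_jump}.

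However, as written your argument is a plan, not a proof: the entire substance of the proposition is the arc-by-arc verification that \emph{both} conjugated products collapse to the \emph{same} explicit matrix $J_0^{(t)}(k)$, and you assert this collapse without carrying out a single case. The paper's proof consists of eight such computations, each with its own explicit $P_\pm$, $\hat P_\pm$ and $\hat J_0$; for instance, on $\{\Im k>0,\ |k|=\frac12,\ |k-\frac{\ii}{2}|>\epsilon\}$ one must verify the matrix identity \eqref{check1}, whose two sides are shown to equal $\left(\begin{smallmatrix}1-\mathfrak R^*\mathfrak R & -\mathfrak R\\ \mathfrak R^* & 1\end{smallmatrix}\right)$ only after invoking \eqref{det_rel}, \eqref{calR} \emph{and} the relation \eqref{propR5} (which your list of ingredients omits, though it is used in four of the eight cases); on the $\epsilon$-circles one must further split into the cases $\mathfrak R=\tilde{\mathfrak R}$ and $\mathfrak R\neq\tilde{\mathfrak R}$, where $G_2$ and $G_3$ from \eqref{G2}--\eqref{G3} degenerate differently, and on the cuts $\Sigma_j$ the identity \eqref{per_ver_j_3_1} requires the precise combination of \eqref{propR5}, \eqref{propR6} and \eqref{propR9}. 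Since these identities are exactly where the claim could fail (the fact that the $y=0$ route and the $y=\theta$ route land on the same matrix is not formal bookkeeping but a consequence of the quadratic equation \eqref{calR} defining $\mathfrak R$), deferring all of them with ``these collapse to the stated $J_0^{(t)}$'' leaves the proposition unproved. To complete the argument you would need to display, for at least each qualitatively distinct type of arc ($\D T$ off the discs, $\mathbb R$ on and off $\Sigma_j$, the imaginary cuts, and the two $\epsilon$-circle arcs), the explicit matrix products and the specific identities from Proposition \ref{prop:R} that make them equal.
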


\begin{proof}
    Introducing the orientation of $\Sigma$ as in Figure \ref{fig:hatSigma_t}
 and the jump matrices 
 $J^{(t)}(t,k)$ and $\hat J^{(t)}(t,k)$ by 
\[
M_+^{(t)}(t,k) = M_-^{(t)}(t,k)J^{(t)}(t,k), \quad 
\hat M_+^{(t)}(t,k) =\hat M_-^{(t)}(t,k)\hat J^{(t)}(t,k),
\]
let's check that  $J^{(t)}(t,k) = \hat J^{(t)}(t,k)$. By the definitions of $M^{(t)}$ and $\hat M^{(t)}$, we have
\begin{align}
    M^{(t)}_+&=\hat M_+(0)P_+=\hat M_-(0)\hat J(0)P_+= M^{(t)}_-P_-^{-1}\hat J(0)P_+,\\
    \hat M^{(t)}_+&=\hat M_+(\theta)\hat P_+=\hat M_-(\theta)\hat J(\theta)\hat P_+= \hat M^{(t)}_-\hat P_-^{-1}\hat J(\theta)\hat P_+,
\end{align}
and, thus, we need to check that 
\begin{equation}\label{per_jump}
    P_-^{-1}(t,k)\hat J(0,t,k)P_+(t,k)=\hat P_-^{-1}(t,k)\hat J(\theta,t,k)\hat P_+(t,k),
\end{equation}
where $\hat J(0,t,k)=\ee^{-ik \frac{t}{2\lambda}\sigma_3 }\hat J_0(k) 
\ee^{ik \frac{t}{2\lambda}\sigma_3 }$ and 
$\hat J(\theta,t,k) = \ee^{-ik (\theta +\frac{t}{2\lambda})\sigma_3 }\hat J_0(k) 
\ee^{ik (\theta +\frac{t}{2\lambda})\sigma_3 }$ with $\hat J_0(k)$
given by \eqref{j0-yt}, \eqref{sigma_j_ver}, and \eqref{sigma_j_hor}.

\noindent (1) For $k\in  \{ \Im k > 0, |k|=\frac{1}{2}, |k-\frac{\ii}{2}| >\epsilon\}$, we have
\begin{align}
   P_-^{-1}&=\begin{pmatrix}
    a^{-1}&0\\
    0&a
\end{pmatrix},\quad P_+=\begin{pmatrix}
   a-b\mathfrak R^*&-b\eul^{-\frac{\ii k t}{\lambda}}\\
 0&a^*-b^*\mathfrak R  
\end{pmatrix}, \\
\hat P_-^{-1}&=        
\begin{pmatrix}
 a-b\mathfrak R^*    &0\\
 -b^*\eul^{2\ii k\theta}\eul^{\frac{\ii k t}{\lambda}}&a^*-b^*\mathfrak R
\end{pmatrix}\begin{pmatrix}
    a^{-1} & 0 \\ 0 & a
\end{pmatrix},\quad \hat P_+= I.
\end{align}
Thus, to establish \eqref{per_jump} in this case, it suffices to verify that

\begin{equation}\label{check1}
    \begin{pmatrix}
    a^{-1}&0\\0&a
\end{pmatrix}\hat{J_0}
\begin{pmatrix}
    a-b\mathfrak R^*&-b\\
    0& a^*-b^*\mathfrak R
\end{pmatrix}=\eul^{-\ii k\theta\sigma_3}\begin{pmatrix}
    a-b\mathfrak R^*&0\\
    -b^*& a^*-b^*\mathfrak R
\end{pmatrix}\begin{pmatrix}
    a^{-1}&0\\0&a
\end{pmatrix} \hat J_0 e^{\ii k\theta\sigma_3},
\end{equation}
where $\hat J_0(k)$ is given by 
\[
\hat J_0(k) = \begin{pmatrix}
          1& -{G}_1(k)\\
         0&1   
\end{pmatrix}
\begin{pmatrix}
          1& 0\\
         {G}(k)&1   
\end{pmatrix}.
\]
Then, using \eqref{det_rel}, \eqref{calR}, and  \eqref{propR5}, 
straightforward calculations show that the both sides of \eqref{check1} equals
$\begin{pmatrix}
    1-{\mathfrak R}^* {\mathfrak R} & -{\mathfrak R} \\
    {\mathfrak R}^* & 1
\end{pmatrix}
=\begin{pmatrix}
    1 & - {\mathfrak R} \\  0  & 1
\end{pmatrix}
\begin{pmatrix}
    1 & 0 \\  {\mathfrak R}^*   & 1
\end{pmatrix}
$.

None that
    the fact that at this part of the contour we arrived at 
    \begin{equation}\label{jump-t}
        J^{(t)}(t,k) = \hat J^{(t)}(t,k) = 
    \ee^{-ik \frac{t}{2\lambda}\sigma_3 }
    \begin{pmatrix}
    1-{\mathfrak R}^*(k) {\mathfrak R}(k) & -{\mathfrak R}(k) \\
    {\mathfrak R}^*(k) & 1
    \end{pmatrix}
    \ee^{ik \frac{t}{2\lambda}\sigma_3 }
    \end{equation}
  is not surprising: checking periodicity can be viewed as a particular 
  case of checking general boundary conditions, by
  transforming the master RH problem taken at the boundary values of $x$
  to those associated with the inverse problem for the $t$ equation 
  of the Lax pair, where the jump matrix should resemble that in the whole 
  line case (cf. \eqref{j0}), with the scattering coefficient $r$ replaced by the 
  scattering coefficients for the $t$ equation, i.e., $\frac{B}{A}$
  or $\frac{B_1}{A_1}$, c.f. \cite{BS04,BS08_2,BFS06}.

Particularly, one can expect the same form of \eqref{jump-t} for 
some other parts of $\Sigma$. 

Indeed, consider the jumps on $k \in \{ \Im k = 0,\, |k| > \tfrac{1}{2} \}$, where we have to distinguish two cases depending on whether $k \in \Sigma_j$ or $k \notin \Sigma_j$.  
  
\noindent (2) If $k\in  \{ \Im k = 0, |k|>\frac{1}{2}\}\cap \Sigma_j$, we have
\begin{align}\label{per_ver_j_2_1}
  P_-^{-1} & =  \begin{pmatrix}
   a^{-1}&0\\
 0&a  
\end{pmatrix},
 \quad P_+=\begin{pmatrix}
    a^{*-1}&0\\
    0&a^{*}
\end{pmatrix},\\\label{per_ver_j_2_2}
\hat P_-^{-1}&=   
\begin{pmatrix}
  a-b\mathfrak {R^*}_+  & 0\\
 -b^*\eul^{2\ii k\theta}\eul^{\frac{\ii k t}{\lambda}}&
 a^*-b^*\mathfrak R_+ 
 \end{pmatrix}\begin{pmatrix}
   a^{-1}&0\\
 0&a  
\end{pmatrix},\quad 
 \hat P_+=
 \begin{pmatrix}
    { a^*}^{-1} & 0 \\ 0 &  {a^*}
\end{pmatrix}
\begin{pmatrix}
 a^*-b^*\mathfrak R_-  & b\eul^{-2\ii k\theta}\eul^{-\frac{\ii k t}{\lambda}}\\
0& a-b\mathfrak {R^*}_-  
\end{pmatrix},
\end{align}
and
$\hat J_0(k)$ is given by 
\begin{equation}
\label{per_ver_j_2_3}    
\hat J_0(k) = \begin{pmatrix}
          1& 0 \\ -{G}^*_{1-}(k) & 1
\end{pmatrix}
\begin{pmatrix}
          1 - |r(k)|^2& r^*(k)\\
         -r(k)&1  
\end{pmatrix}
\begin{pmatrix}
          1& G_{1+}(k)\\
         0&1   
\end{pmatrix}.
\end{equation}
Then, using \eqref{propR8}, \eqref{propR5} and \eqref{propR9}, straightforward calculations show that the jumps 
\(J^{(t)}(t,k)\) and \(\hat J^{(t)}(t,k)\) are equal to
\[J^{(t)}(t,k)=\hat J^{(t)}(t,k)=\ee^{-ik \frac{t}{2\lambda}\sigma_3 }\begin{pmatrix}
    1-{{\mathfrak R}^*}_- {\mathfrak R}_+ & -{\mathfrak R}_+ \\
    {{\mathfrak R}^*}_- & 1
\end{pmatrix}\ee^{ik \frac{t}{2\lambda}\sigma_3 }
.\]

\noindent (3) If $k\in  \{ \Im k = 0, |k|>\frac{1}{2}\}\setminus \Sigma_j$, the equalities \eqref{per_ver_j_2_1} -- \eqref{per_ver_j_2_3} remain valid with $\mathfrak R_+$ and $\mathfrak R_-$ replaced by $\mathfrak R$ (and, correspondingly, $G_+$ and $G_-$ replaced by $G$, and $G_{1+}$ and $G_{1-}$  replaced by $G_1$) since $\mathfrak R$ has no jump across this part of the contour. In this case, using \eqref{calR}, we conclude that the jumps 
\(J^{(t)}(t,k)\) and \(\hat J^{(t)}(t,k)\) are equal to
\[J^{(t)}(t,k)=\hat J^{(t)}(t,k)=\ee^{-ik \frac{t}{2\lambda}\sigma_3 }\begin{pmatrix}
    1-{\mathfrak R}^* {\mathfrak R} & -{\mathfrak R} \\
    {\mathfrak R}^* & 1
\end{pmatrix}\ee^{ik \frac{t}{2\lambda}\sigma_3 }
.\]

\noindent (4) For \(k \in \{ \Im k = 0,\, |k| < \tfrac{1}{2} \}\), an analogous argument shows that the jumps 
\(J^{(t)}(t,k)\) and \(\hat J^{(t)}(t,k)\) coincide.  
Moreover, for \(k \in \{ \Im k = 0,\, |k| > \tfrac{1}{2} \}\), they are given by  
\[
J^{(t)}(t,k) = \hat J^{(t)}(t,k) =
\begin{cases}
\ee^{-ik \frac{t}{2\lambda}\sigma_3 }\begin{pmatrix}
    1 - \mathfrak{R}^* \mathfrak{R} & -\mathfrak{R} \\
    \mathfrak{R}^* & 1
\end{pmatrix}\ee^{ik \frac{t}{2\lambda}\sigma_3 },
& k \notin \Sigma_j, \\[2ex]
\ee^{-ik \frac{t}{2\lambda}\sigma_3 }\begin{pmatrix}
    1 - {\mathfrak{R}^*}_+ \mathfrak{R}_- & -{\mathfrak{R}}_- \\
    {\mathfrak{R}^*}_+ & 1
\end{pmatrix}\ee^{ik \frac{t}{2\lambda}\sigma_3 },
& k \in \Sigma_j .
\end{cases}
\] 

\noindent (5) For $k\in  \{ \Re k = 0\}\cap \Sigma_j$, we have 
\begin{align}
   P_-^{-1}&=\begin{pmatrix}
  a^*-b^*\mathfrak R_- &b\eul^{-\frac{\ii k t}{\lambda}}\\
 0&a-b\mathfrak {R^*}_-  
\end{pmatrix},\quad P_+=\begin{pmatrix}
   a-b\mathfrak {R^*}_+&-b\eul^{-\frac{\ii k t}{\lambda}}\\
 0&a^*-b^*\mathfrak R_+  
\end{pmatrix}, \\
\hat P_-^{-1}&=        
I,\quad \hat P_+= I.
\end{align}
Hence, in order to establish \eqref{per_jump}, it suffices to verify that
\begin{equation}\label{per_ver_j_3_1}
\begin{pmatrix}
  a^*-b^*\mathfrak R_- &b\\
 0&a-b\mathfrak {R^*}_-  
\end{pmatrix}\hat J_0 \begin{pmatrix}
   a-b\mathfrak {R^*}_+&-b\\
 0&a^*-b^*\mathfrak R_+  
\end{pmatrix}=\eul^{-\ii k \theta \sigma_3}\hat J_0 \eul^{\ii k \theta \sigma_3}
\end{equation}
where $\hat J_0(k)$ is given by
\[
\hat J_0(k)=\begin{pmatrix}
		1  & 0 \\
		\ee^{-2\ii k \theta}({{\mathfrak R}^*}_+(k)-{{\mathfrak R}^*}_-(k))  & 1
	\end{pmatrix}
\]
Then, using \eqref{propR5}, \eqref{propR6} and \eqref{propR9},
straightforward computation show that the both sides of \eqref{per_ver_j_3_1} equal $\begin{pmatrix}
		1  & 0 \\
		{{\mathfrak R}^*}_+(k)-{{\mathfrak R}^*}_-(k)  & 1
	\end{pmatrix}$. 

\noindent (6) For $k\in  \{  |k|>\frac{1}{2}, |k-\frac{\ii}{2}| =\epsilon\}$, we have
\begin{align}
   P_-^{-1}&=\begin{pmatrix}
    a^{-1} & 0 \\ 0 & a
\end{pmatrix},\quad P_+=\begin{pmatrix}
     {\tilde a}& 0 \\ 0 & {\tilde a}^{-1}
\end{pmatrix}, \\
\hat P_-^{-1}&= 
\begin{pmatrix}
 a-b\mathfrak R^* &0\\
- b^*\eul^{2\ii k\theta}\eul^{\frac{\ii k t}{\lambda}}& a^*-b^*\mathfrak R  
\end{pmatrix}
\begin{pmatrix}
    a^{-1} & 0 \\ 0 & a
\end{pmatrix},\quad \hat P_+= 
\begin{pmatrix}
    {\tilde a} & 0 \\ 0 & \tilde a^{-1}
\end{pmatrix}\begin{pmatrix}
 a^*-b^*\tilde {\mathfrak R}  &0\\b^*\eul^{2\ii k\theta}\eul^{\frac{\ii k t}{\lambda}}&a-b\tilde {\mathfrak R}^*  
\end{pmatrix}.
\end{align}
Thus, to establish \eqref{per_jump} in this case, it suffices to verify that
\begin{equation}\label{per_ver_j_3}
\begin{aligned}
& \begin{pmatrix}
    a^{-1} & 0 \\ 0 & a
\end{pmatrix} \hat {J_0 }^{-1} \begin{pmatrix}
    {\tilde a} & 0 \\ 0 & \tilde a^{-1}
\end{pmatrix} = \\
& \eul^{-\ii k \theta\sigma_3}  
\begin{pmatrix}
  a-b\mathfrak R^*&0\\
 -b^*&  a^*-b^*\mathfrak R 
\end{pmatrix} \begin{pmatrix}
    a^{-1} & 0 \\ 0 & a
\end{pmatrix} \hat {J_0 }^{-1} \begin{pmatrix}
    {\tilde a} & 0 \\ 0 & \tilde a^{-1}
\end{pmatrix} \begin{pmatrix}
a^*-b^*\tilde {\mathfrak R}   &0\\ b^*&a-b\tilde {\mathfrak R}^*   
\end{pmatrix} \eul^{\ii k \theta\sigma_3},
\end{aligned}
\end{equation}
where
\begin{equation}
  \hat J_0(k)=  
\begin{pmatrix}
    \ee^{\ii k(L-\theta)}& G_2(k)\\
   0 &\ee^{-\ii k(L-\theta)}
\end{pmatrix}
\end{equation}
with $G_2$ defined in \eqref{G2}. In the case with $\mathfrak R = \tilde{\mathfrak R}$, both sides of \eqref{per_ver_j_3} are equal to $I$. In the case with $\mathfrak R \neq \tilde{\mathfrak R}$, using \eqref{propR6} and \eqref{calR}, straightforward calculations show that the both sides of \eqref{per_ver_j_3} are equal to $\begin{pmatrix}
    1 & \tilde{\mathfrak R}(k)-\mathfrak R(k)\\
    0& 1
\end{pmatrix}$.

\noindent (7) For $k\in  \{  |k|<\frac{1}{2}, |k-\frac{\ii}{2}| =\epsilon\}$, we have
\begin{align}
   P_-^{-1}&=\ee^{-\ii k(L-\theta)\sigma_3}\begin{pmatrix}
 a^*-b^* \tilde{\mathfrak R} &b\eul^{-\frac{\ii k t}{\lambda}}\eul^{2\ii k(L-\theta)}\\
 0& a-b\tilde{\mathfrak R}^*  
\end{pmatrix},\quad P_+=\begin{pmatrix}
   a-b\mathfrak R^*&-b\eul^{-\frac{\ii k t}{\lambda}}\\
 0&a^*-b^*\mathfrak R  
\end{pmatrix}, \\
\hat P_-^{-1}&= 
\ee^{-\ii k(L-\theta)\sigma_3},\quad \hat P_+= 
I.
\end{align}
Thus, to establish \eqref{per_jump} in this case, it suffices to verify that
\begin{equation}\label{per_ver_j_4}
   \ee^{-\ii k(L-\theta)\sigma_3}\begin{pmatrix}
 a^*-b^* \tilde{\mathfrak R} &b\eul^{2\ii k(L-\theta)}\\
 0& a-b\tilde{\mathfrak R}^*  
\end{pmatrix}\hat J_0 \begin{pmatrix}
   a-b\mathfrak R^*&-b\eul^{-\frac{\ii k t}{\lambda}}\\
 0&a^*-b^*\mathfrak R  
\end{pmatrix} =\ee^{-\ii k(L-\theta)\sigma_3}\eul^{-\ii k\theta\sigma_3}\hat {J_0 }\eul^{\ii k\theta\sigma_3},
\end{equation}
where
\begin{equation}
  \hat J_0(k)=  
\begin{pmatrix}
 \ee^{\ii k(L-\theta)}&0\\
  {G}_3(k)&\ee^{-\ii k(L-\theta)}
\end{pmatrix}
\end{equation}
with $G_3$ defined in \eqref{G3}.  In the case with $\mathfrak R = \tilde{\mathfrak R}$, both sides of \eqref{per_ver_j_4} are equal to $\eul^{\ii k (L-\theta)\sigma_3}$. In the case with $\mathfrak R \neq \tilde{\mathfrak R}$, using \eqref{propR6} and \eqref{calR}, straightforward calculations show that the both sides of \eqref{per_ver_j_4} are equal to 
$
\begin{pmatrix}
    1& 0\\
   \mathfrak R^*(k)-\tilde{\mathfrak R}^*(k) & 1
\end{pmatrix}$.

\noindent (8) For $k\in  \{  |k|=\frac{1}{2}, |k-\frac{\ii}{2}| <\epsilon\}$, we have
\begin{align}
   P_-^{-1}&=\begin{pmatrix}
    \tilde a ^{-1}& 0 \\ 0 & {\tilde a}
\end{pmatrix},\quad P_+=\begin{pmatrix}
   a-b\tilde{\mathfrak R}^*&-b\eul^{-\frac{\ii k t}{\lambda}}\eul^{2\ii k(L-\theta)}\\
 0&a^*-b^* \tilde{\mathfrak R}  
\end{pmatrix}\ee^{\ii k(L-\theta)\sigma_3}, \\
\hat P_-^{-1}&= 
\begin{pmatrix}
 a-b\tilde {\mathfrak R}^*   &0\\
 -b^*\eul^{2\ii k\theta}\eul^{\frac{\ii k t}{\lambda}}&a^*-b^*\tilde {\mathfrak R} 
\end{pmatrix}\begin{pmatrix}
    \tilde a^{-1} & 0 \\ 0 & {\tilde a}
\end{pmatrix},\quad \hat P_+= 
\ee^{\ii k(L-\theta)\sigma_3}.
\end{align}
Thus, to establish \eqref{per_jump} in this case, it suffices to verify that
\begin{equation}\label{per_ver_j_5}
\begin{aligned}
    \begin{pmatrix}
    \tilde a ^{-1}& 0 \\ 0 & {\tilde a}
\end{pmatrix}\hat J_0 & \begin{pmatrix}
   a-b\tilde{\mathfrak R}^*&-b\eul^{2\ii k(L-\theta)}\\
 0&a^*-b^* \tilde{\mathfrak R}  
\end{pmatrix}\ee^{\ii k(L-\theta)\sigma_3}= \\
& \ee^{-\ii k \theta\sigma_3} \begin{pmatrix}
 a-b\tilde {\mathfrak R}^*   &0\\
 -b^*&a^*-b^*\tilde {\mathfrak R} 
\end{pmatrix}\begin{pmatrix}
    \tilde a^{-1} & 0 \\ 0 & {\tilde a}
\end{pmatrix} \hat J_0 \ee^{\ii k(L-\theta)\sigma_3} \ee^{\ii k \theta\sigma_3},
\end{aligned}
\end{equation}
where
\[
\hat J_0=\begin{pmatrix}
          1& -\tilde{G}_1(k)\\
         0&1   
\end{pmatrix}
\begin{pmatrix}
          1& 0\\
         \tilde{G}(k)&1   
\end{pmatrix}.
\]
Then, using \eqref{propR5}, \eqref{calR} and \eqref{s-tilde-s},
straightforward computation show that the both sides of \eqref{per_ver_j_5} equal 
\[
\begin{pmatrix}
		(1-{{\tilde{\mathfrak R}}^*}(k){{\tilde{\mathfrak R}}}(k))  & -{{\tilde{\mathfrak R}}}(k) \\
		 {{\tilde{\mathfrak R}}^*}(k) & 1
	\end{pmatrix}
    = 
    \begin{pmatrix}
		1 & -{\tilde{\mathfrak R}}(k) \\
        0 & 1 
        \end{pmatrix}
         \begin{pmatrix}
        1 & 0 \\
        {\tilde{\mathfrak R}}^*(k) &  1 
       	\end{pmatrix}.
\]

\end{proof}

Next, we investigate the behavior of $M{(t)}(t,k)$ and $\hat M{(t)}(t,k)$ at the special points.

\begin{proposition}\label{prop:beh_0}
 As $k\to 0$, $k\in\mathbb{C}_+$:
\begin{align}\label{beh_M_t_0_+}
 M^{(t)}(t,k)&=\frac{\ii}{k}\left(\hat c(0,t)\kappa-\rho \hat m^0_1(0,t)\right)\begin{pmatrix}
     0&1\\
     0&-1 \end{pmatrix}+\begin{pmatrix}
         \kappa \hat m_1^0(0,t)&*\\
         -\kappa \hat m_1^0(0,t)&*
     \end{pmatrix}+O(k)
\\\label{beh_hatM_t_0_+}
 \hat M^{(t)}(t,k)&=\frac{\ii }{k}\hat c(\theta,t)\begin{pmatrix}
     0&1\\
     0&-1 \end{pmatrix}+\begin{pmatrix}
         \hat m_1^0(\theta,t)& \hat m_2^0(\theta,t)\\
         - \hat m_1^0(\theta,t)& \hat m_4^0(\theta,t)
     \end{pmatrix}+O(k),
\end{align}
where 
\begin{equation}
    \label{kappa}
    \kappa:=a_0+b_0+\ii\rho \mathfrak R_1=\pm 1.
\end{equation}

 As $k\to 0$, $k\in\mathbb{C}_-$:
\begin{align}\label{beh_M_t_0_-}
 M^{(t)}(t,k)&=-\frac{\ii}{k}\left(\hat c(0,t)\kappa-\rho \hat m^0_1(0,t)\right)\begin{pmatrix}
     -1&0\\
     1&0 \end{pmatrix}+O(1)
\\\label{beh_hatM_t_0_-}
 \hat M^{(t)}(t,k)&=-\frac{\ii }{k}\hat c(\theta,t)\begin{pmatrix}
     -1&0\\
     1&0 \end{pmatrix}+O(1).
\end{align}

In particular, $M^{(t)}(t,k)$ and $\hat M^{(t)}(t,k)$ satisfy the same structural condition at $k=0$.

\end{proposition}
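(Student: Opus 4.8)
\emph{The approach.} The strategy involves no new analytic input: I would insert the known Laurent expansion of the master solution $\hat M(y,t,k)$ at $k=0$ (given by \eqref{beta-M} with $\delta=0$) into the definitions $M^{(t)}=\hat M(0,t,\cdot)\,P$ and $\hat M^{(t)}=\hat M(\theta,t,\cdot)\,\hat P$, and read off the result. The point $k=0$ lies in $\{|k|<\tfrac12\}$ at distance $\tfrac12>\epsilon$ from $\pm\tfrac{\ii}{2}$ and (in the generic position assumed throughout) off the cuts $\Sigma_j$; there the relevant pieces of $P$ and $\hat P$ are the ones valid in $\{\Im k>0,\ |k|<\tfrac12,\ |k-\tfrac{\ii}2|>\epsilon\}$ and its lower counterpart. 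Crucially, in this neighbourhood $\hat P\equiv I$, so $\hat M^{(t)}=\hat M(\theta,t,\cdot)$ near $0$ and \eqref{beh_hatM_t_0_+}--\eqref{beh_hatM_t_0_-} follow at once from the expansion of $\hat M$, once the determinant identity below is used to rewrite the $(2,1)$-entry as $-\hat m_1^0(\theta,t)$.

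\emph{Local data at $k=0$.} Next I would assemble the ingredients for $P$. From \eqref{ab_at_0}, $a,b^*$ (resp.\ $a^*,b$) have a simple pole at $0$ with residue $\mp\ii\rho$, and $\tfrac{b^*}{a}(0)=1$; the quadratic \eqref{calR} then forces $(\mathfrak R+1)^2\big|_{k=0}=0$, i.e.\ $\mathfrak R(0)=-1$, whence $\mathfrak R^*(0)=-1$ by \eqref{propR10}. Writing $\mathfrak R(k)=-1+\mathfrak R_1 k+O(k^2)$, the symmetry \eqref{propR10} gives $\mathfrak R_1^*=-\mathfrak R_1$. Substituting into the upper piece of $P$ shows that its diagonal entries $a-b\mathfrak R^*$ and $a^*-b^*\mathfrak R$ both tend to $\kappa=a_0+b_0+\ii\rho\mathfrak R_1$ (the $k^{-1}$ parts cancel), while the entry $-b\,\eul^{-\ii kt/\lambda}$ carries the simple pole $-\tfrac{\ii\rho}{k}+O(1)$, since $\eul^{-\ii kt/\lambda}=1+O(k)$ for $\lambda=-k^2-\tfrac14$. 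That $\kappa=\pm1$ I would read off \eqref{propR6} at $k=0$, both of whose factors equal $\kappa$; reality of $\kappa$ is clear since $\ii\rho\mathfrak R_1\in\mathbb R$.

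\emph{The product.} The decisive structural fact is the cancellation of the $k^{-2}$ term in $\hat M(0,t,\cdot)P$: the leading coefficient of $\hat M$ is a multiple of $\left(\begin{smallmatrix}0&1\\0&-1\end{smallmatrix}\right)$, that of $P$ a multiple of $\left(\begin{smallmatrix}0&1\\0&0\end{smallmatrix}\right)$, and their product vanishes. Collecting the $k^{-1}$ and $k^0$ coefficients then yields \eqref{beh_M_t_0_+}, provided one uses $\hat m_1^0+\hat m_3^0=0$ --- which is precisely the determinant relation \eqref{hatm_i} in the present case $\delta=0$. This identity is what collapses the $k^{-1}$-coefficient into the rank-one matrix $\ii(\hat c\kappa-\rho\hat m_1^0)\left(\begin{smallmatrix}0&1\\0&-1\end{smallmatrix}\right)$ and makes the first column of the $k^0$ term equal to $\kappa\,(\hat m_1^0,\,-\hat m_1^0)^{\mathrm t}$. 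The lower half-plane expansions \eqref{beh_M_t_0_-}--\eqref{beh_hatM_t_0_-} are obtained identically from the $\mathbb C_-$ block of $P$ (again with $\hat P\equiv I$ for $\hat M^{(t)}$), or deduced from the symmetry $k\mapsto-k$ of Proposition \ref{prop:}. Since in the end both $M^{(t)}$ and $\hat M^{(t)}$ acquire a simple pole whose residue is a \emph{real} multiple of $\left(\begin{smallmatrix}0&1\\0&-1\end{smallmatrix}\right)$ in $\mathbb C_+$ (and of $\left(\begin{smallmatrix}-1&0\\1&0\end{smallmatrix}\right)$ in $\mathbb C_-$), they satisfy the same singularity condition \eqref{hatM_at_0}--\eqref{hatM_at_0_-}, differing only in the value of the unspecified real scalar ($\hat c(0,t)\kappa-\rho\hat m_1^0(0,t)$ versus $\hat c(\theta,t)$).

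\emph{Main obstacle.} The one genuinely delicate part is the bookkeeping in the product: one must expand $a,b,\mathfrak R$ to the order that survives multiplication against the $k^{-1}$ pole of $P$, and track exactly which entries require $\hat m_1^0=-\hat m_3^0$ to assume the advertised rank-one form. Once the two clean facts --- the $k^{-2}$ cancellation and $\kappa=\pm1$ --- are in place, the remainder is a routine matching of coefficients.
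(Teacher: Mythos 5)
Your proposal is correct and takes essentially the same route as the paper's proof: you expand $(a-b\mathfrak R^*)$ and $(a^*-b^*\mathfrak R)$ at $k=0$ using $\mathfrak R(0)=-1$ from \eqref{calR} and the expansions \eqref{ab_at_0}, obtain $\kappa=\pm1$ from \eqref{propR6}, observe $\hat P\equiv I$ near $k=0$ so that \eqref{beh_hatM_t_0_+}--\eqref{beh_hatM_t_0_-} follow directly from \eqref{hatM_at_0}--\eqref{hatM_at_0_-}, and invoke $\hat m_3^0=-\hat m_1^0$ (the $\delta=0$ case of \eqref{hatm_i}) exactly as the paper does. Your explicit bookkeeping of the $k^{-2}$ cancellation ($\left(\begin{smallmatrix}0&1\\0&-1\end{smallmatrix}\right)\left(\begin{smallmatrix}0&1\\0&0\end{smallmatrix}\right)=0$) and of the $k^{-1}$ and $k^{0}$ coefficients merely spells out what the paper compresses into ``straightforward computations.''
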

 
\begin{proof} First, notice that \eqref{hatM_at_0} implies \eqref{beh_hatM_t_0_+}, and \eqref{hatM_at_0_-} implies \eqref{beh_hatM_t_0_-}.

In order to obtain \eqref{beh_M_t_0_+} and \eqref{beh_M_t_0_-}, we begin by analyzing 
\[
\begin{pmatrix}
       a-b\mathfrak R^*&-b\eul^{-\frac{\ii k t}{\lambda}}\\
 0&a^*-b^*\mathfrak R  
\end{pmatrix}\ \ \text{as}\ k\to 0, \ 
\Im k > 0
\]
and 
\[
\begin{pmatrix}
   a-b\mathfrak R^*&0\\
 -b^*\eul^{\frac{\ii k t}{\lambda}}&a^*-b^*\mathfrak R   
\end{pmatrix}\ \ \text{as}\ k\to 0, \ 
\Im k < 0.
\]
In the generic case, where $\rho\neq 0$ in \eqref{ab-0}, \eqref{calR} and \eqref{propR10} imply that $\mathfrak R(k)=-1+k \mathfrak R_1+O(k^2)$ and $\mathfrak R^*(k)=-1-k \mathfrak R_1+O(k^2)$, and, thus,
\[ (a-b\mathfrak R^*)(k)=a_0+b_0+\ii\rho \mathfrak R_1+O(k),\quad (a^*-b^*\mathfrak R)(k)=a_0+b_0+\ii\rho \mathfrak R_1+O(k).\]
Moreover, \eqref{propR6} implies that 
$
  a_0+b_0+\ii\rho \mathfrak R_1=\pm 1  
$, and \eqref{kappa} follows.

Now, pushing the expansions \eqref{hatM_at_0} and \eqref{hatM_at_0_-} of $\hat M(k)$ a step further, and proceeding as in Section~\ref{sec:CH-y}  analogously to \eqref{hatm_i} we get $m_3^0=-m_1^0$, and straightforward computations yield \eqref{beh_M_t_0_+} and \eqref{beh_M_t_0_-}.

Finally, notice that we do not specify coefficients in the structural condition at $k=0$. 
\end{proof}


\begin{proposition}\label{prop:beh_infty}
    As $k\to \infty$, $M^{(t)}(t,k)=I+O(\frac{1}{k})$ and $\hat M^{(t)}(t,k)=I+O(\frac{1}{k})$. 
\end{proposition}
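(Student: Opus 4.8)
The plan is to exploit the factorized definitions $M^{(t)}(t,k)=\hat M(0,t,k)P(t,k)$ and $\hat M^{(t)}(t,k)=\hat M(\theta,t,k)\hat P(t,k)$ together with the normalization $\hat M(y,t,k)=I+\ord(1/k)$ from \eqref{norm-m-hat}. Since $\{|k|<\tfrac12\}$ and the two $\epsilon$-disks around $\pm\tfrac{\ii}{2}$ are bounded, the limit $k\to\infty$ only probes the four unbounded sectors, so it suffices to show that $P(t,k)\to I$ and $\hat P(t,k)\to I$ with rate $\ord(1/k)$ there.

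For $P$ this is immediate: in the upper unbounded sector $P=\mathrm{diag}(a,a^{-1})$ and, by \eqref{a-b-as}, $a(k)=1+\ord(1/k)$ for $\Im k\ge 0$; in the lower unbounded sector $P=\mathrm{diag}({a^*}^{-1},a^*)$ and $a^*(k)=\overline{a(\bar k)}=1+\ord(1/k)$ there. Hence $P=I+\ord(1/k)$ and therefore $M^{(t)}=I+\ord(1/k)$.

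The delicate part is $\hat P$, because in the upper sector the factors $a^*,b^*$ (and in the lower sector $a,b$) actually grow: by \eqref{s_inf}--\eqref{b_inf} each carries a term $\ord(e^{\mp2\ii k\theta}/k)$ whose exponential blows up on the ``wrong'' side, so one cannot pass to termwise limits. I would argue as follows. First, from the quadratic \eqref{calR} together with the normalization $\mathfrak R(\infty)=0$ and the symmetry $\mathfrak R^*(k)=\mathfrak R(-k)$ from \eqref{propR10}, one reads off $\mathfrak R(k)=\ord(1/k)$ and $\mathfrak R^*(k)=\ord(1/k)$ as $k\to\infty$ in each sector: the coefficient of $\mathfrak R$ tends to $-1$ while the leading and free coefficients are $\ord(1/k)$, so the admissible root balances $-\mathfrak R+b\approx 0$ and is $\ord(1/k)$, the other root being $\ord(k)$ and discarded. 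Second, for the diagonal entries of $\hat P$ I would not estimate the factors separately but invoke the identity \eqref{propR6}, $(a-b\mathfrak R^*)(a^*-b^*\mathfrak R)=1$: writing $\hat P$ in the upper sector as $\mathrm{diag}(a,a^{-1})$ times the lower-triangular block, its $(2,2)$ entry $a^{-1}(a-b\mathfrak R^*)$ tends to $1$ since $a\to1$, $b=\ord(1/k)$, $\mathfrak R^*=\ord(1/k)$, and its $(1,1)$ entry $a(a^*-b^*\mathfrak R)$ then tends to $1$ because \eqref{propR6} forces it to be the reciprocal of the $(2,2)$ entry. Third, the single off-diagonal entry $a^{-1}b^*e^{2\ii k\theta}e^{\ii kt/\lambda}$ is controlled because the explicit prefactor $e^{2\ii k\theta}$ is precisely what cancels the growing Fourier part of $b^*$, so $b^*e^{2\ii k\theta}=\ord(1/k)$, while $e^{\ii kt/\lambda}=1+\ord(1/k)$ since $1/\lambda=-1/(k^2+\tfrac14)$. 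The lower sector is handled identically after the symmetry $k\mapsto\bar k$, with $be^{-2\ii k\theta}=\ord(1/k)$ now supplying the decay of the off-diagonal term and \eqref{propR6} again controlling the diagonal.

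Combining these, $\hat P=I+\ord(1/k)$ in both unbounded sectors, whence $\hat M^{(t)}=\hat M(\theta,t,\cdot)\hat P=I+\ord(1/k)$. The main obstacle is exactly the two-sided growth of the spectral functions encoded in \eqref{s_inf}: the argument succeeds only because $P$ and $\hat P$ were designed so that every growing factor is paired either with a compensating exponential $e^{\mp2\ii k\theta}$ or, through \eqref{propR6}, with its reciprocal, so that no unbounded contribution survives in the limit.
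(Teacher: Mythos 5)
Correct, and essentially the paper's own argument: the paper likewise disposes of $P$ via $a,a^*\to 1$ in their respective half-planes, gets $(a-b\mathfrak{R}^*)\to 1$ from \eqref{s_inf} together with the normalization $\mathfrak{R}(\infty)=0$ of \eqref{spec_R}, recovers $(a^*-b^*\mathfrak{R})\to 1$ from the reciprocal identity \eqref{propR6} exactly as you do for the $(1,1)$ entry of $\hat P$, and controls the off-diagonal entry through the pairing $b^*\eul^{2\ii k\theta}=\ord\left(\frac{1}{k}\right)$ from \eqref{b_inf}. Your only departure is the attempted derivation of the rate $\mathfrak{R}(k)=\ord\left(\frac{1}{k}\right)$ from the quadratic \eqref{calR}: the claim that the middle coefficient $\eul^{2\ii k\theta}a^*(k)-a(k)$ tends to $-1$ is not uniform up to the real axis (it equals $\eul^{2\ii k\theta}-1+\ord\left(\frac{1}{k}\right)$, which degenerates where $\eul^{2\ii k\theta}\approx 1$), but this does not damage the proof, since the normalization $\mathfrak{R}(\infty)=0$ invoked by the paper --- indeed mere boundedness of $\mathfrak{R}$, $\mathfrak{R}^*$ --- combined with $b=\ord\left(\frac{1}{k}\right)$ already yields $b\mathfrak{R}^*\to 0$, after which \eqref{propR6} and the exponential pairing carry the rest of your argument unchanged.
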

\begin{proof}
     Notice that \eqref{s_inf} together with \eqref{spec_R} implies that
$(a-b\mathfrak R^*)(k)\to 1$ as $k\to\infty$ in $\mathbb{C}_+$, and \ref{propR6} implies that $(a^*-b^*\mathfrak R)(\infty)\to 1$ as $k\to\infty$ in $\mathbb{C}_+$. Furthermore, \eqref{b_inf} implies that $b^*\eul^{2\ii k\theta}\to 0$ as $k\to \infty$. Thus, the statement follows. 
\end{proof}


\begin{proposition}\label{prop:beh_i_2}
    As $k\to \frac{\ii}{2}$:
    \begin{align}
        \label{beh_M_t_i_2}
             M^{(t)}(t,k)&=\frac{1}{2}\begin{pmatrix}
    ( \hat q(0,t)+\hat q^{-1}(0,t))\hat f(0,t)&(\hat q(0,t)-\hat q^{-1}(0,t))\hat f^{-1}(0,t)\\
   (\hat q(0,t)-\hat q^{-1}(0,t))\hat f(0,t)&(\hat q(0,t)+\hat q^{-1}(0,t))\hat f^{-1}(0,t)
\end{pmatrix}+
O\left(k-\tfrac{\ii}{2}\right),
        \\\label{beh_hatM_t_i_2}
  \hat M^{(t)}(t,k)&=\frac{1}{2}\begin{pmatrix}
    ( \hat q(\theta,t)+\hat q^{-1}(\theta,t))\hat f(\theta,t)\ee^{-\frac{L-\theta}{2}}&(\hat q(\theta,t)-\hat q^{-1}(\theta,t))\hat f^{-1}(\theta,t)\ee^{\frac{L-\theta}{2}}\\
   (\hat q(\theta,t)-\hat q^{-1}(\theta,t))\hat f(\theta,t)\ee^{-\frac{L-\theta}{2}}&(\hat q(\theta,t)+\hat q^{-1}(\theta,t))\hat f^{-1}(\theta,t)\ee^{\frac{L-\theta}{2}}
\end{pmatrix}+
O\left(k-\tfrac{\ii}{2}\right)        
    \end{align}
In particular, 
both $ M^{(t)}(t,k)$ and $ \hat M^{(t)}(t,k)$
satisfy the structural condition \eqref{hatM_at_i2_}.

\end{proposition}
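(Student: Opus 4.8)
The plan is to reduce everything to the structural condition \eqref{hatM_at_i2_} already established for $\hat M(y,t,k)$, specialized to $y=0$ and $y=\theta$, by computing the conjugating matrices $P(t,k)$ and $\hat P(t,k)$ in the limit $k\to\frac{\ii}{2}$. The essential difficulty is that $\frac{\ii}{2}$ sits on the circle $|k|=\frac12$, which separates two of the regions in the piecewise definitions of $P$ and $\hat P$; hence the first thing I would do is verify that the one-sided limits from $|k|>\frac12$ and from $|k|<\frac12$ agree. That they must agree is guaranteed structurally: the jump $\hat J_0$ across $\mathbb T$ near $\frac{\ii}{2}$ degenerates to $I$ there, since $\tilde G(\frac{\ii}{2})=\tilde G_1(\frac{\ii}{2})=0$ (recorded in the proof of Proposition~\ref{prop:con_pr}), so $\hat M$, and therefore $M^{(t)}$ and $\hat M^{(t)}$, are continuous at $\frac{\ii}{2}$ and possess a genuine $O(k-\frac{\ii}{2})$ expansion.

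The computation rests on the values at $k=\frac{\ii}{2}$: from \eqref{tilde-a-b} one has $\tilde a(\frac{\ii}{2})=\tilde a^*(\frac{\ii}{2})=1$ and $\tilde b(\frac{\ii}{2})=\tilde b^*(\frac{\ii}{2})=0$; from the specification \eqref{spec_R} one has $\tilde{\mathfrak R}(\frac{\ii}{2})=\tilde{\mathfrak R}^*(\frac{\ii}{2})=0$; and from \eqref{ab_at_i2} one has $a(\frac{\ii}{2})=\ee^{\frac{L-\theta}{2}}$, $a^*(\frac{\ii}{2})=\ee^{\frac{\theta-L}{2}}$, $b(\frac{\ii}{2})=b^*(\frac{\ii}{2})=0$. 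Substituting these into the two branches of $P$ touching $\frac{\ii}{2}$ — the diagonal $\mathrm{diag}(\tilde a,\tilde a^{-1})$ for $|k|>\frac12$ and the upper-triangular block times $\ee^{\ii k(L-\theta)\sigma_3}$ for $|k|<\frac12$ — I would check that both reduce to $P(t,\frac{\ii}{2})=I$: the off-diagonal entry drops out because $b(\frac{\ii}{2})=0$, while $\ee^{\ii k(L-\theta)\sigma_3}$ at $k=\frac{\ii}{2}$ equals $\mathrm{diag}(\ee^{\frac{\theta-L}{2}},\ee^{\frac{L-\theta}{2}})$, which exactly cancels $\mathrm{diag}(a,a^*)(\frac{\ii}{2})=\mathrm{diag}(\ee^{\frac{L-\theta}{2}},\ee^{\frac{\theta-L}{2}})$. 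The same substitution into the two branches of $\hat P$ yields $\hat P(t,\frac{\ii}{2})=\mathrm{diag}(\ee^{\frac{\theta-L}{2}},\ee^{\frac{L-\theta}{2}})$ from both sides.

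With $P(t,\frac{\ii}{2})=I$ and $\hat P(t,\frac{\ii}{2})$ diagonal in hand, the conclusion is immediate. Right-multiplying the structural expansion \eqref{hatM_at_i2_} of $\hat M(0,t,k)$ by $P$ produces \eqref{beh_M_t_i_2} verbatim, exhibiting $M^{(t)}$ in structural form with $(\hat q,\hat f)=(\hat q(0,t),\hat f(0,t))$. Right-multiplying the structural expansion of $\hat M(\theta,t,k)$ by the diagonal $\hat P$ rescales the two columns by $\ee^{\frac{\theta-L}{2}}=\ee^{-\frac{L-\theta}{2}}$ and $\ee^{\frac{L-\theta}{2}}$ respectively, which reproduces exactly the weighted matrix \eqref{beh_hatM_t_i_2}; reading this off shows $\hat M^{(t)}$ is in structural form with $\hat q=\hat q(\theta,t)$ and $\hat f=\hat f(\theta,t)\ee^{-\frac{L-\theta}{2}}$, the right diagonal factor rescaling $\hat f$ while preserving $\hat q$. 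Since $\hat q$ and $\hat f$ are unspecified in the RH problem, both $M^{(t)}$ and $\hat M^{(t)}$ meet the structural condition \eqref{hatM_at_i2_}.

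The step I expect to demand the most care is precisely the matching of the two one-sided limits at $|k|=\frac12$: one must keep careful track of the interplay between the exponential $\ee^{\ii k(L-\theta)\sigma_3}$, the normalization $\tilde a\to 1$, and the simultaneous vanishing of $b$, $b^*$, $\tilde{\mathfrak R}$, and $\tilde{\mathfrak R}^*$ at $\frac{\ii}{2}$. Everything else is a bookkeeping calculation, and the $O(k-\frac{\ii}{2})$ remainder is inherited automatically from the remainder in \eqref{hatM_at_i2_} together with the analyticity of $P$ and $\hat P$ in a neighbourhood of $\frac{\ii}{2}$ within each region.
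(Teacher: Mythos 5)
Your overall route is the same as the paper's: evaluate the conjugating matrices at $k=\tfrac{\ii}{2}$ using $\tilde a(\tfrac{\ii}{2})=1$ (from \eqref{tilde-a-b}), $\tilde{\mathfrak R}(\tfrac{\ii}{2})=\tilde{\mathfrak R}^*(\tfrac{\ii}{2})=0$ (from \eqref{spec_R}), and $a(\tfrac{\ii}{2})=\eul^{\frac{L-\theta}{2}}$ (from \eqref{ab_at_i2}), then right-multiply the structural expansion \eqref{hatM_at_i2_} at $y=0$ and $y=\theta$; your limit values $P(t,\tfrac{\ii}{2})=I$ and $\hat P(t,\tfrac{\ii}{2})=\operatorname{diag}(\eul^{\frac{\theta-L}{2}},\eul^{\frac{L-\theta}{2}})$, the agreement of the one-sided limits across $|k|=\tfrac12$, and the column-rescaling reading of \eqref{beh_hatM_t_i_2} (with $\hat f$ rescaled, $\hat q$ preserved) are all correct and match the paper.

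However, there is one genuine gap in how you dispose of the off-diagonal entries. You assert they "drop out because $b(\tfrac{\ii}{2})=0$" and later that the $O(k-\tfrac{\ii}{2})$ remainder follows from "the analyticity of $P$ and $\hat P$ in a neighbourhood of $\tfrac{\ii}{2}$ within each region". The latter is false as stated: the off-diagonal entries carry the factors $\eul^{\mp\ii kt/\lambda}$ with $\lambda=-k^2-\tfrac14$, which have an \emph{essential singularity} at $k=\tfrac{\ii}{2}$, so $P$ and $\hat P$ are not analytic there, and $b(\tfrac{\ii}{2})=0$ alone does not kill the product — zero times an unbounded factor is indeterminate. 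What rescues the argument is a sector estimate that you never perform: writing $k=\tfrac{\ii}{2}+\delta\eul^{\ii\phi}$ one gets $\lambda\approx-\ii\delta\eul^{\ii\phi}$ and $\Re\bigl(\tfrac{t}{2\lambda}\bigr)=\tfrac{t}{2\delta}\sin\phi\,(1+O(\delta))$, while the constraints $|k|<\tfrac12$ and $|k|>\tfrac12$ translate (to leading order) into $\sin\phi<-\delta$ and $\sin\phi>-\delta$ respectively; hence $\lvert\eul^{-\ii kt/\lambda}\rvert\le\eul^{-t/2}$ on the $|k|<\tfrac12$ branch of $P$ and $\lvert\eul^{\ii kt/\lambda}\rvert\le\eul^{t/2}$ on the $|k|>\tfrac12$ branch of $\hat P$, i.e.\ the singular exponentials are uniformly bounded within their regions (and exponentially small along non-tangential rays for $t>0$). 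Only after this bound does $b,b^*=O(k-\tfrac{\ii}{2})$ from \eqref{ab_at_i2} give that the off-diagonal entries are $O(k-\tfrac{\ii}{2})$, which is what both your limit evaluation and the claimed remainder in \eqref{beh_M_t_i_2}--\eqref{beh_hatM_t_i_2} require. This is precisely the content of the paper's remark that "the off-diagonal entries of $P(t,k)$ and $\hat P(t,k)$ decay to $0$ exponentially fast as $k\to\tfrac{\ii}{2}$"; with that estimate supplied, the rest of your proof stands.
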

 
\begin{proof}
Notice that 
$
 \tilde{\mathfrak R}\left(\frac{i}{2}\right) =
\tilde{\mathfrak R}^*\left(\frac{i}{2}\right) =0$, and $ a\left(\frac{i}{2}\right)=\ee^{\frac{L-\theta}{2}}
$. Furthermore, the off-diagonal entries of $P(t,k)$ and $\hat P(t,k)$ decay to $0$ exponentially fast as $k\to\frac{\ii}{2}$. Therefore, \eqref{beh_M_t_i_2} and \eqref{beh_hatM_t_i_2} follow (recall that 
the particular values of the coefficients in the 
structural condition at $k=\frac{\ii}{2}$ needn't to be equal).

\end{proof}

\medskip

Finally, let us check that the residue conditions match.

\begin{proposition}\label{prop:res}
(i) $M^{(t)(2)}(t,k)$ and $\hat M^{(t)(2)}(t,k)$ have simple poles at poles $\mu_j$ of $\mathfrak{R}$ with $\mu_j\in D_1\cup D_3$. Moreover,
\begin{align}\label{res_M_t_1}
\Res_{\mu_j} M^{(t)(2)}(t,k)& = M^{(t)(1)}(t,\mu_j)\ee^{-\ii\mu_j\frac{t}{\lambda(\mu_j)}}\Res_{\mu_j}\mathfrak R, \quad \mu_j\in D_1\cup D_3,
\end{align}
and $\hat M^{(t)(2)}(t,k)$ satisfies the same conditions 
\eqref{res_M_t_1}.

\noindent (ii)
$M^{(t)(1)}(t,k)$ and $\hat M^{(t)(1)}(t,k)$ have simple poles at poles $\bar\mu_j$ of $\mathfrak{R}^*$ with $\bar\mu_j\in D_2\cup D_4$. Moreover,
\begin{align}\label{res_M_t_3}
\Res_{\bar\mu_j} M^{(t)(1)}(t,k) = M^{(t)(2)}(t,\bar\mu_j)\ee^{\ii\bar\mu_j\frac{t}{\lambda(\bar\mu_j)}}\Res_{\bar\mu_j}\mathfrak R^*, \quad \mu_j\in D_2\cup D_4,
\end{align}
and $\hat M^{(t)(2)}(t,k)$ satisfies the same conditions 
\eqref{res_M_t_3}.

\noindent (iii)
There are no  other pole singularities.    
\end{proposition}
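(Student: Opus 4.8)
The plan is to verify the two residue conditions directly, computing the residues of $M^{(t)}(t,k)=\hat M(0,t,k)P(t,k)$ and of $\hat M^{(t)}(t,k)=\hat M(\theta,t,k)\hat P(t,k)$ domain by domain, using the master residue conditions \eqref{res_hatM_R} together with the analytic properties of $\mathfrak R$ collected in Proposition \ref{prop:R}. No appeal to uniqueness is needed here; uniqueness enters only afterwards, when \eqref{res_M_t_1}--\eqref{res_M_t_3} are combined with Propositions \ref{prop:jump}--\ref{prop:beh_i_2} to conclude $M^{(t)}\equiv\hat M^{(t)}$ and hence periodicity. The computation parallels part (5) of the proof of Proposition \ref{prop:con_pr}.

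The key preliminary observation is that, by \eqref{calR-form}, the poles $\mu_j$ of $\mathfrak R$ (in $D_1\cup D_3$) are precisely the zeros of $b^*$, while the poles $\bar\mu_j$ of $\mathfrak R^*$ (in $D_2\cup D_4$) are precisely the zeros of $b$. Consequently the products $b^*\mathfrak R$ and $b\mathfrak R^*$ stay finite at these points, so that by \eqref{propR6}--\eqref{propR7} the diagonal entries $a-b\mathfrak R^*$ and $a^*-b^*\mathfrak R$ of $P$ and $\hat P$ are finite and nonvanishing there; moreover $\det P\equiv\det\hat P\equiv1$. Thus the right multiplication by $P$ (resp. $\hat P$) introduces no new singularities away from $\mu_j,\bar\mu_j$, so the only poles of $M^{(t)}$ and $\hat M^{(t)}$ sit at $\mu_j,\bar\mu_j$. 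This yields part (iii), once the points $k=\pm\tfrac{\ii}{2}$ and the zeros of $a,a^*,\tilde a,\tilde a^*$ are checked to be regular: the former by the structural behavior of Proposition \ref{prop:beh_i_2} together with the exponential decay of the off-diagonal entries of $P,\hat P$ as $k\to\pm\tfrac{\ii}{2}$, the latter by the determinant relations \eqref{det_rel} and \eqref{propR6}.

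For part (i) in $D_1$ the computation is immediate: there $P=\mathrm{diag}(a,a^{-1})$ is regular at $\mu_j$, so $M^{(t)(2)}=a^{-1}\hat M^{(2)}$ inherits the pole of $\hat M^{(2)}$ from \eqref{res_hatM-a}; substituting $M^{(t)(1)}(\mu_j)=a(\mu_j)\hat M^{(1)}(\mu_j)$ cancels the factor $a^2(\mu_j)$, and since $\hat p(0,t,\mu_j)=\tfrac{t}{2\lambda(\mu_j)}$ the exponential collapses to $\eul^{-\ii\mu_j t/\lambda(\mu_j)}$, giving \eqref{res_M_t_1}. In the interior domains $D_2,D_3$ (and their near-$\tfrac{\ii}{2}$ counterparts, where $\mathfrak R$ is replaced by $\tilde{\mathfrak R}$), $P$ is triangular: at $\mu_j\in D_3$ the factor $a^*-b^*\mathfrak R$ is regular while $\hat M^{(2)}$ carries the pole, and at $\bar\mu_j\in D_2$ the factor $a-b\mathfrak R^*$ is regular while $\hat M^{(1)}$ carries the pole. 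One then computes $\Res_{\mu_j}M^{(t)(2)}$ (resp. $\Res_{\bar\mu_j}M^{(t)(1)}$), reduces the complementary column to a finite value using the zero of $b^*$ (resp. of $b$) that kills the pole of $\mathfrak R$ (resp. $\mathfrak R^*$), and invokes \eqref{propR5}, \eqref{propR6}, \eqref{propR9} to bring the result to the form \eqref{res_M_t_1} (resp. \eqref{res_M_t_3}); the factor $\eul^{2\ii\mu_j\theta}$ present in \eqref{res_hatM-c} is absorbed in this reduction. The parts of the contour in $\D{C}_-$ follow from the symmetry established in Proposition \ref{prop:}.

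It then remains to show that $\hat M^{(t)}=\hat M(\theta,t,\cdot)\hat P$ produces the \emph{same} residues. Here $\hat M(\theta,t,\cdot)$ obeys \eqref{res_hatM_R} with $\hat p(\theta,t,k)=\theta+\tfrac{t}{2\lambda}$, so its residue exponentials carry an extra factor $\eul^{-2\ii\mu_j\theta}$; this is cancelled precisely by the factors $\eul^{2\ii k\theta}$ built into $\hat P$, leaving the identical exponential $\eul^{-\ii\mu_j t/\lambda(\mu_j)}$ and hence the same conditions \eqref{res_M_t_1}--\eqref{res_M_t_3}. I expect the main difficulty to lie in the interior domains $D_2,D_3$ and their near-$\tfrac{\ii}{2}$ versions, where both $\hat M$ and the triangular factor of $P$ (through $\mathfrak R,\mathfrak R^*$ or $\tilde{\mathfrak R},\tilde{\mathfrak R}^*$) degenerate at the same point: tracking the finite limits of the products $\hat M^{(1)}b$ and $\hat M^{(2)}b^*$ and the $\theta$- and $t$-exponentials, and checking that the two constructions give term-by-term identical residues, is the delicate bookkeeping on which the argument rests.
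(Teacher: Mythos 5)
Your proposal is correct and follows essentially the same route as the paper's own proof: a domain-by-domain residue computation using the explicit column relations $M^{(t)}=\hat M(0,t,\cdot)P$ and $\hat M^{(t)}=\hat M(\theta,t,\cdot)\hat P$, the fact that poles of $\mathfrak R$ (resp.\ $\mathfrak R^*$) sit at zeros of $b^*$ (resp.\ $b$) so that the limits of $b^*\mathfrak R$ and $b\mathfrak R^*$ give $(a^*-b^*\mathfrak R)(\mu_j)=a(\mu_j)\eul^{-2\ii\mu_j\theta}$ and $(a-b\mathfrak R^*)(\mu_j)=a^*(\mu_j)\eul^{2\ii\mu_j\theta}$, the master residue conditions \eqref{res_hatM_R} with the $\theta$-exponential cancellations you describe, and symmetry for the lower half-plane. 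Two cosmetic points: the paper only needs (and proves) the one-sided implication that a pole of $\mathfrak R$ forces $b^*=0$, not your ``precisely''; and regularity of $a^{-1}$ in $D_1$ comes from $a$ having no zeros there (its zeros lie on $(0,\tfrac{\ii}{2})$ by assumption), not from the determinant relations, which instead give $(a-b\mathfrak R^*)\neq 0$ via \eqref{propR6}--\eqref{propR7}.
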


\begin{proof}

If  $\mu_j$ is such that  $\mathfrak R(\mu_j)=\infty$, then $b^*(\mu_j)=0$. Dividing \eqref{calR} by $R$ and passing to the limit as $k\to \mu_j$, we obtain
\begin{equation}\label{Rb*}
  \lim_{k\to \mu_j} \mathfrak R(k)b^*(k)=a^*(\mu_j)-a(\mu_j)\eul^{-2\ii \mu_j \theta}.  
\end{equation}
Consequently, 
\begin{equation}\label{a*-b*R}
(a^*-b^*\mathfrak R)(\mu_j)=a(\mu_j)\eul^{-2\ii\mu_j\theta}.  
\end{equation}
Now, combining \eqref{propR6}, \eqref{a*-b*R} and \eqref{det_rel}, we conclude that 

\begin{equation}\label{a-bR*}
(a-b\mathfrak R^*)(\mu_j)=a^*(\mu_j)\eul^{2\ii\mu_j\theta}.  
\end{equation}

Similarly, if $\bar \mu_j$ is such that $\mathfrak R^*(\bar \mu_j)=\infty$, then $b(\bar \mu_j)=0$. Proceeding as above,
we have
\begin{equation}\label{Rb*_2}
  \lim_{k\to \bar\mu_j} \mathfrak R^*(k)b(k)=a(\bar\mu_j)-a^*(\bar\mu_j)\eul^{2\ii \bar\mu_j \theta}.  
\end{equation}
Consequently, 
\begin{equation}\label{a-bR*_2}
(a-b\mathfrak R^*)(\bar\mu_j)=a^*(\bar\mu_j)\eul^{2\ii\bar\mu_j\theta}  
\end{equation}
and
\begin{equation}\label{a*-b*R_2}
(a^*-b^*\mathfrak R)(\bar\mu_j)=a(\bar\mu_j)\eul^{-2\ii\bar\mu_j\theta}.  
\end{equation}

Recall that for $k\in D_1$, we have
\begin{align}\label{res_M_hat_M_1}
    & M^{(t)(1)}(t,k)=\hat M^{(1)}(0,t,k) a(k), \\\label{res_M_hat_M_2}
    & M^{(t)(2)}(t,k)=\hat M^{(2)}(0,t,k) a^{-1}(k),\\\label{res_M_hat_M_3}
    & \hat M^{(t)(1)}(t,k)= (a^*-b^*\mathfrak R)(k)a(k)\hat M^{(1)}(\theta,t,k) +b^*(k)\ee^{2\ii k\theta}\ee^{\frac{\ii k t}{\lambda}}a^{-1}(k)\hat M^{(2)}(\theta,t,k),\\ \label{res_M_hat_M_4}
    & \hat M^{(t)(2)}(t,k)=(a-b\mathfrak R^*) a^{-1}(k)\hat M^{(2)}(\theta,t,k).
\end{align}
First, let us consider the first columns. It is immediate that $M^{(t)(1)}(t,k)$ is regular at $\mu_j$. Moreover, combining \eqref{Rb*}, \eqref{res_hatM-a} and \eqref{res_M_hat_M_3}, we obtain 
\begin{equation}\label{hatM_lim}
    \lim_{k\to \mu_j} \hat{M}^{(t)(1)}(t,k)=\hat{M}^{(1)}(\theta,t,\mu_j) a^*(\mu_j)
\end{equation}
and thus $\hat{M}^{(t)(1)}(t,k)$ is not singular at 
  $\mu_j$.

Considering the second columns, \eqref{res_hatM-a} and \eqref{res_M_hat_M_2} yield \eqref{res_M_t_1} while \eqref{res_hatM-a}, \eqref{a-bR*}, \eqref{hatM_lim} and \eqref{res_M_hat_M_4} give \eqref{res_M_t_1}
for $\hat M^{(t)}(t,k)$.

Finally, recall that  $a(k)$ has no zeros in $D_1$ and thus $a^{-1}(k)$ does not introduce any additional singularities there.

Therefore, both $M^{(t)}(t,k)$ and $\hat M^{(t)}(t,k)$ satisfy the same residue conditions in $D_1$.

Similarly, for $k\in D_2$, we have

\begin{align}\label{res_M_hat_M_1_}
    & M^{(t)(1)}(t,k)=\hat M^{(1)}(0,t,k) (a-b\mathfrak R^*)(k), \\\label{res_M_hat_M_2_}
    & M^{(t)(2)}(t,k)=-b(k)\ee^{-\frac{\ii k t}{\lambda}}
    \hat M^{(1)}(0,t,k)+(a^*-b^*\mathfrak R)(k)\hat M^{(2)}(0,t,k),\\\label{res_M_hat_M_3_}
    & \hat M^{(t)(1)}(t,k)= \hat M^{(1)}(\theta,t,k) ,\\ \label{res_M_hat_M_4_}
    & \hat M^{(t)(2)}(t,k)=\hat M^{(2)}(\theta,t,k)
\end{align}
and 
\begin{equation}\label{hatM_lim_2}
    \lim_{k\to \bar\mu_j} {M}^{(t)(2)}(t,k)=\hat{ M}^{(2)}(0,t,\bar\mu_j) a^*(\bar\mu_j)
\end{equation}
and thus $\hat{M}^{(t)(2)}(t,k)$ is not singular at 
  $\bar\mu_j$.

Proceeding as above, 
\eqref{res_hatM-b}, \eqref{a-bR*_2}, \eqref{hatM_lim_2} and \eqref{res_M_hat_M_1_} give
\eqref{res_M_t_3} whereas
 \eqref{res_hatM-b}, \eqref{a-bR*}, \eqref{hatM_lim_2} and \eqref{res_M_hat_M_3_} yield
\eqref{res_M_t_3}
for $\hat M^{(t)}(t,k)$.

By symmetry, the  residue conditions match in $D_3$ and $D_4$.

\end{proof}

Combining Propositions \ref{prop:jump}--\ref{prop:res} with Proposition \ref{prop:uniqness}, we conclude that $M^{(t)}(t,k)=\hat M^{(t)}(t,k)$ and thus
\begin{equation}
    \label{per_RH}
    \hat M(0,t,k)P(t,k)=\hat M(\theta,t,k)\hat P(t,k).
\end{equation}

Recall that $\hat u$, $\hat m$, and  $\hat v$
can be defined in terms of $\hat q$ and $\hat \gamma$
which, in turn, are defined from
the expansions of $\hat M(y,t,k)$ as $k\to\frac{\ii}{2}$ and $k\to 0$, see \eqref{q-M} and \eqref{gamma}.
Then Proposition \ref{prop:beh_i_2} combined with \eqref{per_RH} implies that $\hat q(0,t)=\hat q(\theta,t)$. 

On the other hand,
Proposition \ref{prop:beh_0} combined  with \eqref{per_RH} yields
\begin{equation*}
\hat c(0,t)\kappa-\rho \hat m^0_1(0,t)=\hat c(\theta,t), \quad \kappa \hat m^0_1(0,t) = \hat m^0_1(\theta,t) 
\end{equation*}
and thus $\hat \gamma(0,t)=\hat \gamma(\theta,t)$.
Consequently,
\begin{equation}
    \hat u(0,t)=\hat u(\theta,t), \quad  \hat v(0,t)=\hat v(\theta,t), \quad  \hat m(0,t)=\hat m(\theta,t).
\end{equation}

\section{Concluding remark}

The global well-posedness results (see \cites{Con97, CE98_2,CE98}) ensure the existence of a unique solution $u(x,t)$ of \eqref{CH-m1}--\eqref{ic} for all $t\in\mathbb{R}$. 

On the other hand, combining the results of Section \ref{sec:CH-y} and Section \ref{sec:RHpbl}, we have
\begin{proposition}
    If the solution of the Riemann--Hilbert problem \eqref{hatM-jump}--\eqref{hatM_at_i2_} exists, then the function $u(x,t)$, expressed in the parametric form by \eqref{hmbbgg} together with \eqref{x}, is a solution of the periodic initial-boundary value problem for the Camassa--Holm equation \eqref{CH-m1}--\eqref{ic}.
\end{proposition}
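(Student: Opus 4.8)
The plan is to assemble the three structural facts already in hand---the local extraction of a CH solution in the $(y,t)$ variables from Section~\ref{sec:CH-y}, the recovery of the initial data in Step~2, and the matching of boundary values in Step~3---and then to transport the resulting $(y,t)$ solution back to the physical variables by means of Proposition~\ref{prop:periodic2}. Throughout I work under the standing assumptions $m_0+1>0$, $m_0(0)=m_0(L)=0$, and $u(0,t)\le 0$, and I take for granted (as in Section~\ref{sec:CH-y}) that the solution $\hat M(y,t,k)$ is differentiable in $y$ and $t$.

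First I would check that $\hat M(y,t,k)$ fits the abstract framework of Section~\ref{sec:CH-y}. Its jump has the required form $\ee^{-\ii k\hat p\sigma_3}\hat J_0\ee^{\ii k\hat p\sigma_3}$ with $\det\hat J_0\equiv 1$, it satisfies the singularity condition at $k=0$ in its generic ($\delta=0$) form, the structural condition at $\pm\tfrac{\ii}{2}$, and the residue conditions \eqref{res_hatM_R}. By Proposition~\ref{prop:} the solution is unique and obeys the flip symmetry \eqref{sym_2}; the symmetry \eqref{propR10} of $\mathfrak R$ together with the conjugation symmetries of $a,b$ gives \eqref{sym_J_1}, so Proposition~\ref{prop:sym} supplies the conjugation symmetry \eqref{sym_1} as well. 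Hence Propositions~\ref{Prop_Lax_y}, \ref{Prop_Lax_t} and~\ref{prop:12} apply: the quantities $\hat q,\hat\gamma$ read from the expansions of $\hat M$ at $k=\tfrac{\ii}{2}$ and $k=0$ produce, through \eqref{hmbbgg}, functions $\hat u,\hat m,\hat v$ solving the CH system \eqref{CH_in_y} locally in $(y,t)$.

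Next I would invoke the two verification results. For the initial data, Proposition~\ref{prop:con_pr} identifies the $t=0$ slice of $\hat M$, via the explicit triangular/diagonal transformation \eqref{y-ini}, with the inverse-scattering reconstruction $M^{(y)}$ of Proposition~\ref{prop:M-y}, giving $\hat q(y,0)=\hat q_0(y)$ and hence $\hat m(y,0)=\hat m_0(y)$; inverting the periodic relation \eqref{CH_in_y-3} then recovers $\hat u(y,0)=\hat u_0(y)$, which in the $x$ variable is $u(x,0)=u_0(x)$. For periodicity, the chain of Propositions~\ref{prop:jump}--\ref{prop:res}, combined with uniqueness (Proposition~\ref{prop:uniqness}), yields $\hat M(0,t,k)P(t,k)=\hat M(\theta,t,k)\hat P(t,k)$, i.e. \eqref{per_RH}; reading off the coefficients as $k\to\tfrac{\ii}{2}$ and $k\to 0$ (Propositions~\ref{prop:beh_i_2} and~\ref{prop:beh_0}) gives $\hat q(0,t)=\hat q(\theta,t)$ and $\hat\gamma(0,t)=\hat\gamma(\theta,t)$, whence $\hat u(0,t)=\hat u(\theta,t)$, $\hat v(0,t)=\hat v(\theta,t)$, $\hat m(0,t)=\hat m(\theta,t)$. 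After upgrading this to genuine $\theta$-periodicity of $\hat u,\hat v,\hat m$ in $y$, Proposition~\ref{prop:periodic2} produces via the change of variables \eqref{x} a solution $u(x,t)=\hat u(y(x,t),t)$, $m(x,t)=\hat m(y(x,t),t)$ of \eqref{CH-m1} that is $L$-periodic in $x$; this is exactly the solution of \eqref{CH-m1}--\eqref{ic} satisfying \eqref{period} that is claimed.

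The step I expect to be the main obstacle is precisely the passage from endpoint matching at $y=0,\theta$ to full $\theta$-periodicity of $\hat u(\cdot,t)$, which is what Proposition~\ref{prop:periodic2} actually consumes. Because the frame $x(y,t)$ in \eqref{x} carries the $t$-dependent drift $\int_0^t\hat u(0,\zeta)\,\dd\zeta$, the preimages $y(0,t)$ and $y(L,t)$ are generally not $0$ and $\theta$, so to conclude $u(0,t)=u(L,t)$ one needs $\hat u(y+\theta,t)=\hat u(y,t)$ at a whole continuum of base points $y$, not at a single one. The natural remedy is to re-run the $P$-transformation argument of Proposition~\ref{prop:jump} with base points $y$ and $y+\theta$ in place of $0$ and $\theta$; since $P,\hat P$ are built only from the spectral data $a,b,\mathfrak R,\tilde{\mathfrak R}$ and from $t$, this should go through, but one must verify carefully that the $\ee^{-\ii ky\sigma_3}$ gauge factor carried by $\hat p(y,t,k)$ passes through the conjugations so that the jump, normalization, singularity, structural and residue conditions are all preserved. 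A lesser point to check is that inverting \eqref{CH_in_y-3} on $L$-periodic functions (i.e. inverting $1-\partial_{xx}$) indeed reproduces $u_0$ exactly from $m_0$, consistently with the assumptions $m_0+1>0$ and $m_0(0)=m_0(L)=0$.
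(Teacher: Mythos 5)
Your proposal is correct and follows essentially the same route as the paper, whose own proof of this proposition is simply the assembly you describe: Propositions \ref{Prop_Lax_y}, \ref{Prop_Lax_t} and \ref{prop:12} extract a local solution of \eqref{CH_in_y} from $\hat M$, Proposition \ref{prop:con_pr} settles the initial condition, Propositions \ref{prop:jump}--\ref{prop:res} together with uniqueness (Proposition \ref{prop:uniqness}) give \eqref{per_RH} and hence $\hat u(0,t)=\hat u(\theta,t)$, $\hat v(0,t)=\hat v(\theta,t)$, $\hat m(0,t)=\hat m(\theta,t)$, and Proposition \ref{prop:periodic2} transports the result back to $(x,t)$. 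Where you go beyond the paper is your flagged obstacle, and it is a real one: the paper verifies matching only at the base points $y=0$ and $y=\theta$, whereas Proposition \ref{prop:periodic2} consumes $\theta$-periodicity of $\hat u,\hat v,\hat m$ in $y$, and because of the drift $\int_0^t \hat u(0,\zeta)\,\dd\zeta$ in \eqref{x} the preimages $y(0,t)$ and $y(L,t)=y(0,t)+\theta$ move with $t$, so endpoint matching at $0,\theta$ alone does not directly yield \eqref{period}; the paper passes over this silently. Your remedy is the right one: $P$ and $\hat P$ enter only through $a$, $b$, $\mathfrak R$, $\tilde{\mathfrak R}$ and the phases $\ee^{\mp\ii k t/\lambda}=\ee^{\mp 2\ii k\hat p(0,t,k)}$ and $\ee^{\pm 2\ii k\theta}\ee^{\pm\ii kt/\lambda}=\ee^{\pm 2\ii k\hat p(\theta,t,k)}$, so replacing these by $\ee^{\mp 2\ii k\hat p(y,t,k)}$ and $\ee^{\pm 2\ii k\hat p(y+\theta,t,k)}$ leaves every algebraic identity in Propositions \ref{prop:jump}--\ref{prop:res} intact, and uniqueness then gives $\hat u(y+\theta,t)=\hat u(y,t)$ (and likewise for $\hat v$, $\hat m$) for every $y$, which is exactly what Proposition \ref{prop:periodic2} needs. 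Your lesser point is also consistent with the paper: Proposition \ref{prop:con_pr} identifies only $\hat q$, hence $\hat m(y,0)=\hat m_0(y)$, and the recovery of $u(\cdot,0)=u_0$ from $m_0$ by inverting $1-\partial_{xx}$ on $L$-periodic functions is left implicit there, so making it explicit as you do strengthens rather than departs from the paper's argument.
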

    
Furthermore, for $t=0$, the solution of the Riemann--Hilbert problem \eqref{hatM-jump}--\eqref{hatM_at_i2_} exists by construction (see Remark \ref{rem:const_t_0}). Then, the analytic Fredholm alternative (see \cite{Z89}) implies that any loss of solvability of the RH problem can occur only at a discrete set of points. Consequently,  such a breakdown  contradicts the existence of the global solution of $u(x,t)$.

\section{Acknowledgment}

IK acknowledges the support from 
 the Austrian Science Fund (FWF), grant no.
10.55776/ESP691. 
DS acknowledges the support from the Virtual Ukraine
Institute for Advanced Study (VUIAS), Fellow 2025/2026.

\bibliographystyle{RS}
\bibliography{shepelsky_etal}

\end{document}